\documentclass[11pt]{article}
\usepackage{amsfonts,amsmath,amssymb,mathrsfs,bm,amsthm}
\usepackage{graphicx}
\usepackage{tikz,wrapfig}
\usepackage{caption,epic,eepic}
\usepackage{hyperref}
\usepackage{xcolor}
\DeclareFontFamily{U}{mathc}{}
\DeclareFontShape{U}{mathc}{m}{it}{<->s*[1.03] mathc10}{}
\DeclareMathAlphabet{\mathcal}{U}{mathc}{m}{it}
\usepackage[scr=rsfso,calscaled=1.2]{mathalfa}

\setlength{\topmargin}{0cm}
\setlength{\headheight}{0cm}
\setlength{\headsep}{0cm}
\addtolength{\textheight}{4.5cm}
\addtolength{\textwidth}{3cm}
\addtolength{\oddsidemargin}{-1cm}
\addtolength{\oddsidemargin}{-1cm}


\newcommand{\noi}{\noindent}



\newcommand{\red}[1]{\textcolor{red}{#1}}

\newtheorem{theorem}{Theorem}[section]
\newtheorem{proposition}[theorem]{Proposition}
\newtheorem{corollary}[theorem]{Corollary}
\newtheorem{conjecture}[theorem]{Conjecture}
\newtheorem{lemma}[theorem]{Lemma}


\theoremstyle{definition}
\newtheorem{remark}[theorem]{Remark}
\newtheorem{definition}[theorem]{Definition}

\newcommand{\bd}{\begin{definition}}
\newcommand{\ed}{\end{definition}}
\newcommand{\bt}{\begin{theorem}}
\newcommand{\et}{\end{theorem}}
\newcommand{\bl}{\begin{lemma}}
\newcommand{\el}{\end{lemma}}
\newcommand{\bp}{\begin{proposition}}
\newcommand{\ep}{\end{proposition}}
\newcommand{\bcor}{\begin{corollary}}
\newcommand{\ecor}{\end{corollary}}
\newcommand{\br}{\begin{remark}\rm}
\newcommand{\er}{\end{remark}}
\newcommand{\bcon}{\begin{conjecture}}
\newcommand{\econ}{\end{conjecture}}

\renewcommand{\theequation}{\thesection .\arabic{equation}}



\newcommand{\Ei}{{\cal E}}

\newcommand{\Oi}{{\cal O}}

\newcommand{\R}{{\mathbb R}}
\newcommand{\N}{{\mathbb N}}
\newcommand{\Z}{{\mathbb Z}}

\renewcommand{\P}{{\mathbb P}}
\newcommand{\E}{{\mathbb E}}

\newcommand\bP{\ensuremath{\boldsymbol{\mathrm{P}}}}
\newcommand\bE{\ensuremath{\boldsymbol{\mathrm{E}}}}

\newcommand\be{\ensuremath{\boldsymbol{\mathrm{e}}}}




\newcommand{\dd}{{\rm d}}

\renewcommand{\tilde}{\widetilde}          

\newcommand{\relmiddle}[1]{\mathrel{}\middle#1\mathrel{}}
\newcommand{\cond}{\relmiddle{|}}
\newcommand{\ball}[2]{B\left(#1;#2\right)}
\newcommand{\Lball}[2]{\mathcal{B}\left(#1;#2\right)}
\newcommand{\fregion}{{B(0;2N) \setminus \Oi}}


\newcommand{\rad}{\varrho_N}
\newcommand{\cent}{\mathcal{x}_N}

\let\originalleft\left
\let\originalright\right
\renewcommand{\left}{\mathopen{}\mathclose\bgroup\originalleft}
\renewcommand{\right}{\aftergroup\egroup\originalright}

\begin{document}

\makeatletter\@addtoreset{equation}{section}
\makeatother\def\theequation{\thesection.\arabic{equation}}

\title{Biased random walk conditioned on survival among Bernoulli obstacles: subcritical phase}
\author{Jian Ding$^{\,1}$ \and Ryoki Fukushima$^{\,2}$ \and Rongfeng Sun$^{\,3}$ \and Changji Xu$^{\,4}$}

\date{\today}

\maketitle

\footnotetext[1]{Statistics Department, University of Pennsylvania. Email: dingjian@wharton.upenn.edu}

\footnotetext[2]{Research Institute for Mathematical Sciences, Kyoto University. Email: ryoki@kurims.kyoto-u.ac.jp}

\footnotetext[3]{Department of Mathematics, National University of Singapore. Email: matsr@nus.edu.sg}

\footnotetext[4]{Department of Statistics, University of Chicago. Email: changjixu@galton.uchicago.edu}

\begin{abstract}
We consider a discrete time biased random walk conditioned to avoid Bernoulli obstacles on $\Z^d$ ($d\geq 2$) up to time $N$. This model is known to undergo a phase transition: for a large bias, the walk is ballistic whereas for a small bias, it is sub-ballistic. We prove that in the sub-ballistic phase, the random walk is contained in a ball of radius $O(N^{1/(d+2)})$, which is the same scale as for the unbiased case. As an intermediate step, we also prove large deviation principles for the endpoint distribution for the unbiased random walk at scales between $N^{1/(d+2)}$ and $o(N^{d/(d+2)})$.
These results improve and complement earlier work by Sznitman [{\em Ann. Sci. \'Ecole Norm. Sup. (4)}, 28(3):345--370, 371--390, 1995].
\end{abstract}

\vspace{.3cm}

\noi
{\it MSC 2000.} Primary: 60K37; Secondary: 60K35.

\noi
{\it Keywords.} Bernoulli obstacles, biased random walk, Faber--Krahn inequality, annealed law.

\section{Introduction}
\subsection{Model and main results}
\label{sec:results}
Let $(S:=(S_n)_{n\ge 0}, \bP)$ be a simple symmetric random walk on $\Z^d$ starting at the origin and denote the corresponding expectation by $\bE$.
When we start the random walk from $x\in\Z^d\setminus\{0\}$, we indicate the starting point by subscript as $\bP_x$ or $\bE_x$.
We place an obstacle at each site $x\in\Z^d$ independently with probability $1-p$ for some $p\in(0,1)$ and write $\Oi$ for the set of sites occupied by the obstacles. Probability and expectation for the random obstacles configuration will be denoted by $\P$ and $\E$, respectively. For a random variable $X$ and an event $A$, we write $\E[X:A]$ for $\E[X \cdot 1_A]$, and this convention applies to other probability measures. We are interested in the behavior of the random walk with bias $h\in\R^d$ conditioned to avoid $\Oi$ for a long time, that is, the hitting time $\tau_\Oi$ of $\Oi$ is large.
\begin{definition}
The annealed law with bias $h \in \R^d$ is defined by
\begin{equation}
 \mu_N^h ((S,\Oi)\in\cdot)=\frac{\E\otimes\bE\left[e^{\langle h, S_N\rangle}\colon \tau_{\cal{O}}>N, (S,\Oi)\in\cdot\right]}{\E\otimes\bE\left[e^{\langle h,S_N\rangle}\colon \tau_{\cal{O}}>N\right]}.
\label{eq:biased_law}
\end{equation}
When $h =0$, we omit the superscript and write $\mu_N^0=\mu_N$ for simplicity.
\end{definition}
\begin{remark}
In the definition of $\mu_N^h$, we can perform the $\E$-expectation conditionally on the random walk to get the expression
\begin{equation}
 \mu_N^h (S\in \cdot)=\frac{\bE\left[\exp\left\{\langle h,S_N\rangle-|S_{[0,N]}|\log \tfrac{1}{p}\right\}\colon S\in\cdot\right]}{\bE\left[\exp\left\{\langle h,S_N\rangle-|S_{[0,N]}|\log \tfrac{1}{p}\right\}\right]},
\end{equation}
where $S_{[0,N]}=\{S_0,S_1,\dotsc,S_N\}$ is the range of the random walk.
This can be viewed as a model of self-attractive polymer with an external force $h$.
\end{remark}
In the case $h=0$, the leading order asymptotics of the partition function was determined by Donsker--Varadhan~\cite{DV79} as follows:
\begin{equation}
\begin{split}
 \P\otimes\bP\left(\tau_\Oi>N\right)&=\exp\left\{-c(d,p)N^{\frac{d}{d+2}}+o(N^{\frac{d}{d+2}})\right\}\text{ with }\\
 c(d,p)&=\inf_{\boldsymbol{U}\subset \R^d}\left\{\boldsymbol{\lambda_U}+{\rm vol}(\boldsymbol{U}) \log\frac{1}{p}\right\}
\end{split}
\label{eq:DV79}
\end{equation}
as $N\to\infty$, where $\boldsymbol{\lambda_U}$ denotes the smallest eigenvalue of the continuum Laplacian $-\tfrac{1}{2d}\Delta$ with the Dirichlet boundary condition outside $\boldsymbol{U}\subset\R^d$. Here and in what follows, we use boldface to denote subsets of $\R^d$ and the eigenvalues of continuum Laplacian. 
For instance, we write $\boldsymbol{B}(x;r)\subset\R^d$ for the Euclidean ball with center $x$ and radius $r$ and $B(x;r):=\boldsymbol{B}(x;r)\cap \Z^d$.
By the classical Faber--Krahn inequality, the above infimum is achieved by 
$U=\boldsymbol{B}(0;\varrho_1)$ 
for some $\varrho_1=\varrho_1(d,p)$ (but in fact the center is arbitrary). This indicates that the best strategy to achieve $\{\tau_\Oi>N\}$ is for the random walk to spend most of the time in a vacant (i.e.,~free of obstacles) ball of radius
\begin{equation}
 \rad=\varrho_1N^{1/(d+2)}.
\end{equation}
Subsequently, more refined picture under $\mu_N$ has been proved in~\cite{Szn91a,Bol94,Pov99,DFSX18,BC18}: there exists a random center
\begin{equation}
\cent(\Oi)\in \ball{0}{\rad}
\end{equation}
such that for any $\epsilon>0$,
\begin{equation}
\lim_{N\to\infty}
\mu_N\left(\ball{\cent}{(1-\epsilon)\rad}\subset S_{[0,N]}\subset \ball{\cent}{(1+\epsilon)\rad}
\right)=1.
\label{eq:SBP}
\end{equation}
Note that the left inclusion in particular implies that the ball $B({\cent};{(1-\epsilon)\rad})$ is vacant.

The model with non-zero bias first appeared in the physics literature~\cite{GP82} where a phase transition of the asymptotic velocity was discussed. A rigorous proof of this ballisticity transition was given in~\cite{Szn95b1,Szn95b2}, as a consequence of a large deviation principle for $\mu_N(S_N/N \in \cdot )$. We shall provide a more detailed overview on related works in Section~\ref{sec:history}.

In this paper, we study the sub-ballistic phase of $\mu_N^h$ in detail. In order to state the results, we need to introduce the so-called Lyapunov exponent (or norm) which measures the cost for the random walk to make a long crossing among the obstacles. For 
$x=(x_1, \ldots, x_d)\in\R^d$, we write $[x]:=(\lfloor x_1\rfloor, \ldots, \lfloor x_d\rfloor)\in \Z^d$.

\begin{definition}
\label{def:beta}
The annealed Lyapunov exponent $\beta\colon\R^d\to [0,\infty)$ is defined by
\begin{equation}
 \beta(x)=-\lim_{n\to\infty}\frac1n \log \P\otimes \bP(\tau_\mathcal{O}>\tau_{[nx]}),
\label{eq:def_beta}
\end{equation}
and its dual norm $\beta^*$ is defined by
\begin{equation}
 \beta^*(h)=\sup\left\{\langle h, x\rangle \colon \beta(x)=1\right\}.
\end{equation}
\end{definition}
The set of critical points $h_c\in\R^d$ for the aforementioned ballisticity transition are characterized by this dual norm as $\beta^*(h_c)=1$. The existence of the limit in~\eqref{eq:def_beta} follows from the subadditive ergodic theorem. It can be further shown that
\begin{equation}
\label{eq:shape_thm}
 \lim_{|x|\to\infty}\frac{1}{|x|}|\beta(x)+\log \P\otimes \bP(\tau_\mathcal{O}>\tau_{x})|=0.
\end{equation}
See~\cite[Theorem~3.4 on p.244]{Szn98} for the corresponding result in the continuum setting.

Now we are ready to state the first main result of this paper, that is the large deviation principle under the annealed law without bias in the scale between $\rad$ and $o(\rad^d)$. For scales between $\rad^d$ to $N$, the large deviation principle is proved in~\cite{Szn95b1,Szn95b2}. We write $B(y;r)\subset\Z^d$ for the Euclidean ball centered at $y\in\R^d$ and radius $r>0$, and ${\rm dist}_\beta$ for the distance with respect to the Lyapunov norm $\beta(\cdot)$.
\begin{theorem}
\label{thm:LDP}
Let $d\ge 2$.
\begin{enumerate}
 \item  Let $\varphi(N)$ be such that $\rad \ll \varphi(N) \ll \rad^d$. Then for any $x\in\R^d$,
\begin{equation}
 \mu_N(S_N=[\varphi(N) x])=\exp\{-\beta(x)\varphi(N)(1+o(1))\},
\label{eq:LDP>>rad}
\end{equation}
as $N\to\infty$.
\item For any $x\in\R^d$,
\begin{equation}
 \mu_N(S_N=[\rad x])=\exp\left\{-{\rm dist}_\beta(x,\ball{0}{2})\rad(1+o(1))\right\},
\label{eq:LDP=rad}
\end{equation}
as $N\to\infty$.
\end{enumerate}
\end{theorem}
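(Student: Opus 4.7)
My plan is to bound the ratio
\begin{equation*}
\mu_N(S_N=[\varphi(N)x])=\frac{\E\otimes\bE[\tau_\Oi>N,\,S_N=[\varphi(N)x]]}{\E\otimes\bE[\tau_\Oi>N]}
\end{equation*}
from above and below by matching exponentials, arranged so that the Donsker--Varadhan clearing factor appears identically in the numerator and denominator and cancels, leaving only the Lyapunov cost of reaching $[\varphi(N)x]$. The two main inputs are the sub-ballistic localization~\eqref{eq:SBP} and the shape theorem~\eqref{eq:shape_thm}.

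\emph{Lower bound, Part 1.} I would construct an explicit favourable scenario in which: (i) the ball $\ball{0}{\rad}$ is obstacle-free (the clearing); (ii) a thin corridor from $\partial\ball{0}{\rad}$ to $[\varphi(N)x]$ is obstacle-free (the Lyapunov piece); (iii) the walk stays inside $\ball{0}{\rad}$ on $[0,N-T]$ and traverses the corridor on $[N-T,N]$, with $T$ of order $|[\varphi(N)x]|$. Because the clearing and the corridor occupy disjoint lattice regions, the two obstacle events are $\P$-independent. The clearing-plus-stay-in-ball contribution reproduces the Donsker--Varadhan factor $\exp\{-c(d,p)N^{d/(d+2)}(1+o(1))\}$ that matches the denominator, and the corridor contribution is $\exp\{-\beta(x)\varphi(N)(1+o(1))\}$ by~\eqref{eq:shape_thm} applied to the crossing of length $\varphi(N)|x|\to\infty$.

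\emph{Upper bound, Part 1.} I would reduce via~\eqref{eq:SBP} to obstacle configurations for which the walk is confined to a ball $\ball{\cent}{(1+o(1))\rad}$ with $\cent\in\ball{0}{\rad}$. Since $\varphi(N)\gg\rad$, the endpoint $[\varphi(N)x]$ lies outside any such ball, so the trajectory must leave the ball at some last time $\sigma<N$ and never return. Decomposing at $\sigma$ and applying the strong Markov property, the post-$\sigma$ piece is an obstacle-avoiding crossing of length $\asymp\varphi(N)|x|$ from $\partial\ball{\cent}{(1+o(1))\rad}$ to $[\varphi(N)x]$; exploiting the $\P$-independence of obstacles inside and outside the ball, this crossing factor is bounded by $\exp\{-\beta(x)\varphi(N)(1-o(1))\}$ via~\eqref{eq:shape_thm}, while the remaining pre-$\sigma$ factor reconstructs the denominator $Z_N=\P\otimes\bP(\tau_\Oi>N)$ up to $(1+o(1))$ in the exponent.

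\emph{Part 2 and principal obstacle.} When $\varphi(N)=\rad$ the admissible clearings $\ball{\cent}{\rad}$ with $\cent\in\ball{0}{\rad}$ cover exactly $\ball{0}{2\rad}$, which explains the threshold at $|x|=2$. If $x\in\ball{0}{2}$ I choose $\cent$ so that $[\rad x]\in\ball{\cent}{\rad}$; the endpoint density inside the clearing is of order $1/\rad^d=\exp\{-o(\rad)\}$, matching ${\rm dist}_\beta(x,\ball{0}{2})=0$. If $x\notin\ball{0}{2}$ I optimise $\cent\in\ball{0}{\rad}$ to minimise the residual travel distance and apply the Part 1 argument; by homogeneity of $\beta$ the resulting cost is ${\rm dist}_\beta(x,\ball{0}{2})\,\rad(1+o(1))$. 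The principal technical obstacle throughout is error control: the raw $o(N^{d/(d+2)})$ error in Donsker--Varadhan is much larger than the target precision $o(\varphi(N))$ whenever $\varphi(N)\ll\rad^d$. The remedy, as described, is to couple numerator and denominator to the \emph{same} clearing realisation so that the Donsker--Varadhan factor cancels identically rather than only modulo its $o(\cdot)$ correction; the sharp localisation~\eqref{eq:SBP} and its quantitative upgrades in~\cite{DFSX18,BC18} are what make this coupling possible.
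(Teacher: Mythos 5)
Your lower bound is in the spirit of the paper's Proposition~\ref{prop:LDPlower}, but two points need repair. First, you cannot invoke $\P$-independence between the clearing event and the crossing event: if you force the walk through a prescribed obstacle-free corridor the cost becomes $e^{-c|x|\varphi(N)}$ rather than the Lyapunov cost $e^{-\beta(x)\varphi(N)}$, so to get the right exponent you must let the crossing walk roam freely among obstacles, and then the crossing can re-enter the vicinity of the vacant ball and the two obstacle events depend on overlapping portions of $\Oi$. The paper resolves this with the FKG inequality: both the stay-in-ball probability and the crossing probability are decreasing functions of $\Oi$, which yields the factorisation as an inequality without any disjointness assumption.

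The genuine gap is in the upper bound, which rests on ``reducing via~\eqref{eq:SBP} to obstacle configurations for which the walk is confined.'' But~\eqref{eq:SBP} is an assertion about the \emph{unconditioned} measure $\mu_N$, and the event $\{S_N=[\varphi(N)x]\}$ is disjoint from the confinement event (the endpoint lies outside every admissible ball when $\varphi(N)\gg\rad$) and has exponentially small $\mu_N$-probability. Restricting to the SBP event therefore produces only the vacuous bound $\mu_N(S_N=[\varphi(N)x])\le o(1)$, with no information about the exponential rate. What is required is the genuinely new statement that a vacant ball of radius $(1-o(1))\rad$ persists \emph{conditionally} on the walk ending far away, i.e.\ under $\mu_{N,x}=\P\otimes\bP(\cdot\mid\tau_\Oi>\tau_x^N)$; establishing this is Proposition~\ref{prop:vacant} and occupies Section~\ref{sec:vacant} (coarse-graining of the eigenfunction level set, quantitative Faber--Krahn, and the density-dichotomy Lemma~\ref{lem:odensity}). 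Even granted a conditional vacant ball, decomposing at the last-exit time $\sigma$ does not reconstruct the denominator: you must also prove $\sigma=N-o(N)$ (Proposition~\ref{prop:outsideB}), you must decouple the outer and inner trajectory pieces despite the shared environment (Proposition~\ref{prop:z_fixed} does this by stopping the outer pieces on disjoint Lyapunov balls, which also sidesteps the fact that $\sigma$ is not a stopping time), and the paper is forced to condition on $\{\tau_\Oi>\tau_x^N\}$ rather than $\{S_N=x,\tau_\Oi>N\}$ because the path-switching step in the density-dichotomy argument can lengthen paths (Remark~\ref{rem:switch}) --- a constraint your outline does not anticipate and which changes the object being estimated.
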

The form of the rate functions reflects the following facts. First, we can let the random walk reach any point $y\in\ball{0}{2\rad}$ with a negligible cost by shifting the center $\cent$ of the vacant ball in~\eqref{eq:SBP} so that $\ball{\cent}{\rad}$ contains $0$ and $y$. This is why the rate function is zero inside $\ball{0}{2}$ in~\eqref{eq:LDP=rad}. Next, when $[\varphi(N) x]\not\in\ball{0}{2\rad}$, it turns out that the best strategy is still to have a vacant ball of radius almost $\rad$. Thus the cost for the random walk to reach $[\varphi(N) x]$ comes solely from the crossing from $\ball{0}{2\rad}$ to $[\varphi(N) x]$, and it is measured by the Lyapunov norm $\beta$. This explains the form of rate function in~\eqref{eq:LDP=rad}. In~\eqref{eq:LDP>>rad}, the size of $\ball{0}{2\rad}$ is negligible compared with $\varphi(N)$ and hence it does not affect the asymptotics.

The second main result in this paper is a detailed description of the behavior of the random walk under $\mu_N^h$ with a sub-critical drift. As $\mu_N^h$ is obtained by tilting $\mu_N$ by $e^{\langle h,S_N\rangle}$, the competition between the gain $\langle h,S_N\rangle$ and the cost for the displacement in Theorem~\ref{thm:LDP} determines the behavior of $S_N$. The following theorem describes not only the endpoint but also the whole path behavior.
\begin{theorem}
\label{thm:confine} Let $d\geq 2$. 
Suppose $\beta^*(h) < 1$. Then for any $\epsilon>0$,
\begin{equation}
\lim_{N\to\infty} \mu^h_N\left(
\ball{\rad\be_h}{(1-\epsilon)\rad}
\subset
S_{[0,N]}\subset \ball{\rad\be_h}{(1+\epsilon)\rad}
\right)=1,
\label{eq:confine}
\end{equation}
where $\be_h:=h/|h|$. Furthermore,
\begin{equation}
\label{eq:LLN}
\lim_{N\to\infty}\frac{1}{\rad}S_N=2\be_h
\text{ in $ \mu^h_N$-probability,}
\end{equation}
\end{theorem}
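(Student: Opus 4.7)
The plan is to deduce Theorem~\ref{thm:confine} from Theorem~\ref{thm:LDP} together with the unbiased confinement result~\eqref{eq:SBP}. The crucial first step is to establish the partition function asymptotics
\begin{equation*}
Z_N^h := \E\otimes\bE\left[e^{\langle h,S_N\rangle}\colon \tau_\Oi>N\right] = Z_N\,\exp\left\{2|h|\rad+o(\rad)\right\}.
\end{equation*}
For the lower bound I would use the enlargement-of-obstacle strategy: demand that $\ball{\rad\be_h}{\rad}$ be vacant (same cost as in the unbiased optimal strategy) and force $S_N$ near $2\rad\be_h$ at negligible extra cost, which is admissible since $\mathrm{dist}_\beta(2\be_h,\ball{0}{2})=0$ in Theorem~\ref{thm:LDP}(2), producing the tilting factor $e^{2|h|\rad}$. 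For the upper bound I would decompose $Z_N^h=Z_N\sum_y e^{\langle h,y\rangle}\mu_N(S_N=y)$ and split into $|y|=O(\rad)$ versus $|y|\gg\rad$. On the first scale, Theorem~\ref{thm:LDP}(2) combined with $\sup_{|y|\le 2\rad}\langle h,y\rangle=2|h|\rad$ gives the bound; on the second, the inequality $\langle h,y\rangle\le\beta^*(h)\beta(y)$ with $\beta^*(h)<1$ combined with~\eqref{eq:LDP>>rad} forces these contributions to be $e^{-\Omega(\rad)}$-smaller.

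For the endpoint law of large numbers~\eqref{eq:LLN}, given $\delta>0$ one writes
\begin{equation*}
\mu_N^h\!\left(|S_N-2\rad\be_h|\ge\delta\rad\right)=\frac{1}{Z_N^h}\sum_{|y-2\rad\be_h|\ge\delta\rad}e^{\langle h,y\rangle}Z_N\,\mu_N(S_N=y).
\end{equation*}
The variational analysis from the previous step together with Theorem~\ref{thm:LDP} shows that outside the $\delta\rad$-neighborhood of the unique maximizer $y=2\rad\be_h$ the exponent $\langle h,y\rangle-r(y)$ (where $r$ denotes the relevant rate function) is bounded by $(2|h|-\eta(\delta))\rad$ for some $\eta(\delta)>0$, so the ratio decays like $e^{-\eta(\delta)\rad/2}$ once the lower bound on $Z_N^h$ is applied.

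The path confinement~\eqref{eq:confine} uses the unbiased confinement result: setting $A_y:=\{\ball{y}{(1-\epsilon)\rad}\subset S_{[0,N]}\subset\ball{y}{(1+\epsilon)\rad}\}$, one has $\mu_N\bigl(\bigcup_y A_y\bigr)\to 1$ by~\eqref{eq:SBP}, and on $A_y$ the inclusion $0\in\ball{y}{(1+\epsilon)\rad}$ forces $|y|\le(1+\epsilon)\rad$, while $|S_N-y|\le(1+\epsilon)\rad$ yields $\langle h,S_N\rangle\le\langle h,y\rangle+(1+\epsilon)|h|\rad$. Combining with Step~1,
\begin{equation*}
\mu_N^h(A_y)\le\exp\left\{\langle h,y\rangle-(1-\epsilon)|h|\rad+o(\rad)\right\}\mu_N(A_y),
\end{equation*}
which is uniformly $o(1)$ unless $\langle h,y\rangle\ge(1-2\epsilon)|h|\rad$. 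Together with $|y|\le(1+\epsilon)\rad$, this condition forces $|y-\rad\be_h|=O(\sqrt{\epsilon})\rad$ by elementary geometry, and $A_y$ for such $y$ is contained in the target event of~\eqref{eq:confine} with $\epsilon$ enlarged to $O(\sqrt{\epsilon})$; taking $\epsilon$ small enough completes the proof.

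The main technical obstacle lies in the upper bound in Step~1, since Theorem~\ref{thm:LDP} provides only pointwise asymptotics $\mu_N(S_N=y)=e^{-r(y)(1+o(1))}$ with a potentially non-uniform $o(1)$. A uniform version of the LDP—or at least a uniform tail bound of the form $\mu_N(S_N=y)\le e^{-c\beta(y)}$ valid for $|y|\gg\rad$ including scales beyond $\rad^d$ that lie outside the range of Theorem~\ref{thm:LDP}—is the key technical ingredient needed to carry out the summation over endpoints rigorously.
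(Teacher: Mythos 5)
Your argument for \eqref{eq:LLN} is essentially the paper's own: restrict to $|S_N|\le\epsilon\rad^d$ via~\cite{Szn95b2}, lower bound the partition function by forcing $S_N$ near $2\rad\be_h$ via~\eqref{eq:LDPlower}, upper bound each $\mu_N(S_N=y)$ using the subcriticality $\beta^*(h)<1$, and conclude by elementary geometry. The ``uniform LDP'' gap you flag at the end is real but already handled in the paper by Proposition~\ref{prop:LDPupper}, which gives the needed uniform (in $x$, for $(2+\epsilon)\rad\le|x|\le\epsilon\rad^d$) bound $\mu_N(S_N=x)\le\exp\{-(1-\epsilon^c)\,{\rm dist}_\beta(x,\ball{0}{2\rad})\}$; beyond $\epsilon\rad^d$ one invokes~\cite{Szn95b2} directly.

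The confinement argument has a genuine gap you do not flag, and it is the more serious one. You decompose on the events $A_y$ and control $\mu_N^h(A_y)$ for those $y$ with $\langle h,y\rangle<(1-2\epsilon)|h|\rad$. But you never control $\mu_N^h\bigl((\bigcup_y A_y)^c\bigr)$. The inequality $\mu_N\bigl(\bigcup_y A_y\bigr)\to 1$ from~\eqref{eq:SBP} does \emph{not} transfer to $\mu_N^h$: since $\mu_N^h$ is obtained from $\mu_N$ by tilting with $e^{\langle h,S_N\rangle}$ and renormalizing, even after restricting (via the already-proved~\eqref{eq:LLN}) to $|S_N-2\rad\be_h|<\delta\rad$ one has at best
\begin{equation*}
\mu_N^h\left(\left(\bigcup_y A_y\right)^{c},\,|S_N-2\rad\be_h|<\delta\rad\right)
\le\frac{e^{(2+\delta)|h|\rad}\,Z_N}{Z_N^h}\,\mu_N\left(\left(\bigcup_y A_y\right)^{c}\right),
\end{equation*}
and with $Z_N^h/Z_N=e^{2|h|\rad+o(\rad)}$ the prefactor is of order $e^{c\delta\rad}$. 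For this to vanish you would need $\mu_N\bigl((\bigcup_y A_y)^c\bigr)=o(e^{-c\delta\rad})$, i.e.\ a rate in~\eqref{eq:SBP} that is exponential in $\rad=N^{1/(d+2)}$. No such rate is available---the quantitative versions of the unbiased confinement in this paper and in~\cite{DFSX18} are super-polynomial (of order $e^{-(\log N)^2}$), which is far too slow. The paper sidesteps this exactly via Lemma~\ref{lem:pinned}: by conditioning on $S_N=x$, the tilting factor $e^{\langle h,x\rangle}$ becomes a constant that cancels in the ratio $\mu_N^h(S_N=x,A)\le\tilde\mu_{N,x}(A)$, so there is no exponential amplification to absorb; one then pays only a polynomial factor $N^c$ to pass from $\tilde\mu_{N,x}$ to $\mu_{N,x}$, against which the super-polynomial bound in Corollary~\ref{cor:confine} (and the polynomial number of $x\in\ball{2\rad\be_h}{\epsilon\rad}$) suffices. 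Without some mechanism of this kind, your tilting step breaks down.
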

\begin{remark}
With a little more effort, we can replace $\epsilon$ in the above theorem by $\rad^{-c}$ for a small $c>0$. However, since our argument does not seem to give a good control on $c$, we decided not to present the proof of this refinement.
\end{remark}
\begin{remark}
Our argument for Theorem~\ref{thm:confine} provides a proof of~\eqref{eq:confine} in the case $h=0$ as well. See Remarks~\ref{rem:unbiased1} and~\ref{rem:unbiased2}. In this case, it can be regarded as a combination of the ideas from~\cite{Szn91b,Pov99} and from~\cite{Bol94}.
\end{remark}
The first assertion~\eqref{eq:confine} says that the result~\eqref{eq:SBP} remains true under $\mu_N^h$ if $\beta^*(h)<1$ but the center $\cent$ of the ball becomes $\rad\be_h$. The second assertion~\eqref{eq:LLN} is natural since this strategy maximizes the weight $e^{{\langle h, S_N\rangle}}$ in~\eqref{eq:biased_law} under the constraint in~\eqref{eq:confine}.
\subsection{Related works}
\label{sec:history}
We give a brief overview on the earlier works related to our results. The problem of diffusing particle among the traps has been discussed in the continuum and discrete settings in parallel and most of the results hold in both cases without change. For this reason, we often refrain from indicating in which setting the results are proved.

This type of model with non-zero bias first appeared in the physics literature~\cite{GP82} where a ballisticity transition was discussed. On the mathematical side, the first result seems to be~\cite{EL87} where a phase transition for the free energy of $\mu_N^h$ is proved. In particular, it is proved that when $d\ge 2$ and the bias is small, the partition function of $\mu_N^h$ has the same asymptotics as~\eqref{eq:DV79}.
Then in 1990s, Sznitman studied this model and its quenched counterpart in a series of works. We summarize some of the results related to this paper. In~\cite{Szn95b1,Szn95b2}, the annealed Lyapunov exponent with an additional parameter $\lambda\ge 0$ was defined as follows:
\begin{equation}
 \beta_\lambda(x)=-\lim_{n\to\infty}\frac1n \log \E\otimes \bE\left[e^{-\lambda\tau_{[nx]}}\colon\tau_\mathcal{O}>\tau_{[nx]}\right].
\end{equation}
Then, improving upon earlier results in~\cite{Szn91a}, it is proved for $\rad^d\le \varphi(N)\le N$ that the law of $S_N/\varphi(N)$ under $\mu_N$ satisfies a large deviation principle at rate $\varphi(N)$ with rate function
\begin{equation*}
\begin{cases}
   J(x)=\sup\{\beta_\lambda(x)-\lambda\colon \lambda\ge 0\}
&\text{if }\varphi(N)=N \text{ (\cite[(0.2), (0.3)]{Szn95b1})}, \\
 \beta(x)
&\text{if }\rad^d \ll \varphi(N) \ll N \text{ (\cite[(0.4)]{Szn95b1})},\\
{\beta(x)}
&{\text{if } \varphi(N)=\rad^d \text{ and } d\ge 2\text{ (\cite[(0.4)]{Szn95b2})}}.
\end{cases}
\end{equation*}
We discuss the case $d=1$ and $\varphi(N)=\rad^d$ later.

By a standard tilting argument (see~\cite[Theorem~II.7.2]{Ell85}, for example), the above large deviation principle can be transferred to those for $\mu_N^h(S_N/\varphi(N)\in\cdot)$ at the same rate with rate function
\begin{equation*}
\begin{cases}
J^h(x)=J(x)-\langle h,x \rangle -\inf_{y\in\R^d}\{J(y)-\langle h,y \rangle\}
&\text{if }\varphi(N)=N \text{ (\cite[Theorem~2.1]{Szn95b2})},\\
\beta(x)-\langle h,x \rangle
&\text{if }\beta^*(h)<1 \text{ and }\rad^d \ll \varphi(N) \ll N,\\
\beta(x)-\langle h,x \rangle
&\text{if }\beta^*(h)<1, d\ge 2 \text{ and }\varphi(N)=\rad^d\\
&\text{ (\cite[Theorem~2.2]{Szn95b2})}.
\end{cases}
\end{equation*}
The first one in particular implies the phase transition of the velocity at $\beta^*(h)=1$ (see~\cite[Corollary~4.10 on p.262]{Szn98} for the precise statement). The third one implies that in the subcritical phase, the endpoint of the walk is of distance $o(\rad^d)$ from the origin. This also extends the result of~\cite{EL87} to the whole subcritical phase. See also~\cite{Flu07,Flu08} for the results in the discrete setting.
Later Ioffe and Velenik studied the ballistic phase in more detail. An interested reader is referred to~\cite{Iof15}. Among other things, it is proved in~\cite{IV12} that the walk is ballistic at criticality. Thus what has been left open is the precise scaling limit under the subcritical drift. Theorem~\ref{thm:confine} fills this missing piece and completes the picture.

Let us also mention that more is known in dimension one. The ballisticity transition follows from the results in~\cite{Szn95b1}. The results corresponding to Theorems~\ref{thm:LDP} and~\ref{thm:confine} are proved in~\cite{Pov95} and~\cite{Pov97}, respectively, but with some notable differences. First, unlike in our Theorem~\ref{thm:LDP}, the rate function in~\cite{Pov97} in the scale $\rad$ has a vanishing gradient at $|x|=2$. The reason for this singularity in $d=1$ is that the costs for $\tau_\Oi>N$ and $S_N=[\rad x]$ cannot be separated. In order for the random walk to reach $[\rad x]$, the interval $[0,[\rad x]]$ must be free of obstacles and that certainly helps to have $\tau_\Oi>N$. When $d\ge 2$, the size of the vacant ball is essentially determined by the leading term in~\eqref{eq:DV79} which is much larger than $\rad$, and hence we can separate the cost for $S_N=[\rad x]$ as we explained after Theorem~\ref{thm:LDP}. Second, in~\cite{Pov95}, the path behavior is studied not only on the macroscopic scale $\rad$ but also on the microscopic scale $O(1)$. On the latter scale, the result roughly says that the walk behaves as if it is conditioned to stay away from a wall with a random position, which lies at the first obstacle to the left of the origin. Though the macroscopic scaling result was later extended to the so-called ``soft obstacles'' in~\cite{Set03}, the microscopic scaling problem remains open in that case. Finally around the critical bias, the asymptotic speed is proved to be continuous in the hard obstacles case in~\cite[Theorem~5.1]{IV10}, whereas~\cite[Corollary~1.1]{KM12} implies that it is discontinuous in the case of soft obstacles. Later in~\cite{KS17}, it is proved that the walk with the critical bias among hard obstacles scales like $N^{1/2}$.
\medskip

\begin{remark}
The behavior on the microscopic scale in higher dimensions is a very interesting open problem. The difficulty in the soft obstacles and higher dimensional cases is that, unlike in the case of one-dimensional hard obstacles, a single obstacle cannot play the role of a wall. One needs to understand the geometry of the obstacles configuration around the starting point of the random walk under the effect of the conditioning on the long time survival.
\end{remark}

\section{Outline of proofs}
In this section, we explain the outline of the proofs and the organization of the rest of this paper. The main conceptual difficulty is that we are studying events whose probability decay much slower than the partition functions. This is particularly easy to see in Theorem~\ref{thm:LDP}: from~\eqref{eq:DV79}, we know the asymptotics of the partition function
\begin{equation}
 \P\otimes\bP\left(\tau_\Oi>N\right)=\exp\left\{-c(d,p)N^{\frac{d}{d+2}}+o\left(N^{\frac{d}{d+2}}\right)\right\},
\label{eq:DV}
\end{equation}
but the error term is much larger than the leading terms in Theorem~\ref{thm:LDP}.
Since we have little further information about the error term, it is difficult to prove Theorem~\ref{thm:LDP} by computing the asymptotics of $\P\otimes\bP(\tau_\Oi>N, S_N=x)$ explicitly. Instead, we will use comparison arguments to say something about the path measure, comparing different strategies to achieve $\{\tau_\Oi>N, S_N=x\}$. Roughly speaking, it turns out that when $|x|=o(\rad^d)$, one of the best strategies for the random walk, which gives the dominant contribution, is to stay in a vacant ball of radius almost $\rad$ for a long time and then go to $x$ during the small time interval near the end. As a result, the costs for surviving for a long time and reaching $x$ without hitting the obstacles in $\P\otimes\bP(\tau_\Oi>N, S_N=x)$ can be separated. The former cost counterbalance the partition function and the latter cost gives the rate function in Theorem~\ref{thm:LDP}. For a technical reason, to be explained in Remark~\ref{rem:switch}, we will work under a slightly different conditioning: Let $\tau_x^N$ be the first hitting time of $x$ after time $N$ and define
\begin{equation}
\mu_{N,x}(\cdot) =\P\otimes \bP(\cdot\mid \tau_\Oi> \tau_x^N).
\end{equation}

Now let us describe in more detail how the rest of the paper is organized.

In Section~\ref{sec:LDPlower}, we show the lower bound in Theorem~\ref{thm:LDP}. In particular, it implies a lower bound on the partition function of $\mu_{N,x}$ since $\{S_N=x,\tau_\Oi>N\}\subset\{\tau_\Oi>\tau_x^N\}$. The proof is based on the construction of a specific strategy to achieve $S_N=x$ and $\tau_\Oi>N$. We first use~\eqref{eq:SBP} to find a vacant (i.e., free of obstacles) ball ``shifted toward $x$'' and let the random walk stay there most of the time. Then in the final $O({\rm dist}_\beta(x,\ball{0}{2\rad}))$ time, we let the random walk go to $x$. The first part has a probability comparable to $\P\otimes\bP(\tau_\Oi>N)$, while the probability of the second part decays exponentially in ${\rm dist}_\beta(x,2\rad)$. We use the FKG inequality to separate these two parts.

In Section~\ref{sec:vacant}, we show that under $\mu_{N,x}$ with $|x|=o(\rad^d)$, there exists a vacant ball of radius almost $\rad$, just as in~\eqref{eq:SBP}. We also show that it is hard for the random walk to survive outside the vacant ball. For the proofs, we will first use a coarse graining scheme from~\cite{DX18} (or alternatively the method of enlargement of obstacles in~\cite{Szn97a}) to show that there exists an almost vacant ball. Then we use a density dichotomy lemma from~\cite{DFSX18} to conclude that the ball is completely vacant.

In Section~\ref{sec:outsideB}, we show that the random walk under $\mu_{N,x}$ with $|x|=o(\rad^d)$ will spend only little time outside the vacant ball. More precisely, we first prove that the time spent before reaching and after leaving the vacant ball cannot be too long. Second, we prove that the random walk path between the first and last visit to the vacant ball is confined in a slightly larger and concentric ball. The proofs rely on the results in Section~\ref{sec:vacant} and a path switching argument in the same spirit as in~\cite{DFSX18}.

In Section~\ref{sec:cost=dist}, we essentially show that the cost for $\tau_\Oi>\tau_x^N$ can be separated into three parts: (i) crossings from the origin to the vacant ball, (ii) staying near the vacant ball, and (iii) crossing from the vacant ball to $x$. Due to the confinement proved in Section~\ref{sec:outsideB}, part (ii) is independent from other parts and has probability comparable to $\P\otimes\bP(\tau_\Oi>N)$. If parts (i) and (iii) are nearly independent, then the costs are measured by the distances from the origin and $x$ to the vacant ball with respect to the Lyapunov norm, respectively. As it is not easy to control the dependence between (i) and (iii), we will modify them in the proof. See Proposition~\ref{prop:z_fixed} for the precise formulation. Adapting the same argument, we also prove that when $|x|$ is close to $2\rad$, then the whole random walk path is confined in a small neighborhood of the vacant ball under $\mu_{N,x}$.

In Section~\ref{sec:LDPupper}, we prove the upper bound in Theorem~\ref{thm:LDP}. This follows almost directly from the first result in Section~\ref{sec:cost=dist} since $\{S_N=x,\tau_\Oi>N\}\subset\{\tau_\Oi>\tau_x^N\}$.

In Section~\ref{sec:thm_drift}, we prove Theorem~\ref{thm:confine}. The law of large numbers~\eqref{eq:LLN} can be deduced from Theorem~\ref{thm:LDP} and large deviation results in~\cite{Szn95b1,Szn95b2} via a standard tilting argument, but we will present a more direct argument.
In order to prove the confinement~\eqref{eq:confine}, we use~\eqref{eq:LLN} to relate $\mu_N^h$ to the random walk law conditioned to avoid obstacles up to time $N$ and end around $2\rad\be_h$. By using the results in Section~\ref{sec:outsideB}, this latter law can further be related to $\mu_{N,x}$ with $x$ close to $2\rad\be_h$, for which the confinement is proved in Section~\ref{sec:cost=dist}.


\section{Notation and preliminaries}
We will prove various intermediate propositions with error terms depending on $|x|$. To simplify the notation, we define
\begin{equation}
\delta_{N,x}={\rad^{-1/5}\vee ({|x|}/{\rad^{d}}),}
\label{eq:def_delta}
\end{equation}
which goes to zero polynomially fast in $N$ when $|x|\le \rad^{d-\xi}$ for some $\xi>0$. The exponent $-1/5$ is rather arbitrary and has no significance.

We use $c$ and $c'$ to denote a positive constant whose value may change from line to line. When we need to keep the value of a constant within a proof, we use the upper case letters $C$, $C_1$ and $C_2$. We write $c_{X.Y}$ for a constant defined in Theorem/Proposition/Lemma~$X.Y$, if it is referred to in other places.

Next, we collect some notation and estimates for the simple symmetric random walk on $\Z^d$. For $U\subset\Z^d$, we denote by $\lambda_U$ the smallest Dirichlet eigenvalue of the discrete Laplacian $-\frac1{2d}\Delta$. Then, we have the following tail estimate for the exit time $\tau_{U^c}$ from $U$:
\begin{equation}
\begin{split}
\bP_x(\tau_{U^c}>n) &\le |U|^{1/2}(1-\lambda_U)^n\\
&\le |U|^{1/2}\exp\left\{-n\lambda_U\right\}.
\end{split}
\label{eq:SG}
\end{equation}
See~\cite[(2.21)]{Kon16} for the continuous time analogue. A similar bound with $|U|^{1/2}$ replaced by $c(1+n\lambda_U)^{d/2}$ also holds, see~\cite[(1.9) in Section~3]{Szn98}. Combining this with a Faber--Krahn type inequality $\lambda_U \ge c|U|^{-2/d}$ for the eigenvalue of Laplacian on $\Z^d$~\cite[Remark~3.2.6 and Theorem~3.2.7]{Kum14}, we can deduce that
\begin{equation}
\bP_x(\tau_{U^c}>n) \le c\exp\left\{-c n|U|^{-2/d}\right\}.
\label{eq:SG2}
\end{equation}

For $U\subset\Z^d$, $n\ge 0$ and $x,y\in U$, we write $p^U_n(x,y)$ for the transition probability of the random walk killed upon exiting from $U$. Whenever we use this notation, we tacitly assume that $|y|_1$ has the same parity as $n+|x|_1$.
\begin{lemma}
There exists $c>0$ such that for any $R\ge 2$, $n\ge R^2/2$ and $x,y\in\ball{0}{R}$,
\begin{equation}
p^{\ball{0}{R}}_n(x,y) \ge \frac{c}{R^{d}}
d_R(x)d_R(y)
\exp\left\{-\frac{n}{cR^2}\right\},
\label{eq:ballHK}
\end{equation}
where $d_R(z)=R^{-1}{\rm dist}_{\ell^1}(z,\partial{\ball{0}{R}})$.
If $x=y\in\ball{0}{R}$, the same bound holds for all $n\ge 0$.
\end{lemma}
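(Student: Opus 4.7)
Bound \eqref{eq:ballHK} is a standard Gaussian-type lower bound for the Dirichlet heat kernel on a ball; the right-hand side decomposes naturally into three factors. The boundary-decay factors $d_R(x)d_R(y)$ arise from the principal Dirichlet eigenfunction $\phi_1$, which satisfies $\phi_1(z)\asymp R^{-d/2}d_R(z)$ by the boundary Harnack principle; the volume factor $R^{-d}$ comes from mixing inside the ball; and the exponential $e^{-n/(cR^2)}$ reflects $\lambda_{\ball{0}{R}}\asymp R^{-2}$.

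My plan is to prove \eqref{eq:ballHK} by a three-piece Chapman--Kolmogorov decomposition. Let $\tilde B=\ball{0}{R/2}$, $n_1=\lfloor R^2/8\rfloor$ (adjusted by $\pm 1$ to match parity), and $n_2=n-2n_1\ge R^2/4$. Then
\begin{equation*}
p^{\ball{0}{R}}_n(x,y)\ge\sum_{z_1,z_2\in\tilde B}p^{\ball{0}{R}}_{n_1}(x,z_1)\,p^{\ball{0}{R}}_{n_2}(z_1,z_2)\,p^{\ball{0}{R}}_{n_1}(z_2,y),
\end{equation*}
so by reversibility of $p^{\ball{0}{R}}$ it suffices to prove two bounds: \textbf{(A)} an escape-from-the-boundary estimate $\bP_x(S_{n_1}\in\tilde B,\,\tau_{\ball{0}{R}}>n_1)\ge c\,d_R(x)$ for every $x\in\ball{0}{R}$; and \textbf{(B)} a central-mixing bound $p^{\ball{0}{R}}_m(z_1,z_2)\ge c R^{-d}\exp(-m/(cR^2))$ for $z_1,z_2\in\tilde B$ and $m\ge R^2/4$ of the correct parity.

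For (A), the heart of the matter is the harmonic-measure estimate $\bP_x(T_{\tilde B}<\tau_{\ball{0}{R}})\ge c\,d_R(x)$, where $T_{\tilde B}$ is the hitting time of $\tilde B$. I would prove this by a martingale argument applied to the discrete harmonic function $u(x)=\bP_x(T_{\tilde B}<\tau_{\ball{0}{R}})$ on the annulus $\ball{0}{R}\setminus\tilde B$, comparing with the continuous Poisson kernel of $\boldsymbol{B}(0;R)$; alternatively, a one-dimensional projection to the radial coordinate realizes $d_R(x)$ as a classical gambler's-ruin probability. Once the walk enters $\tilde B$, the strong Markov property and a standard exit-time bound show that with probability bounded below by a constant it stays in $\ball{0}{R}$ for another $O(R^2)$ steps and is inside $\tilde B$ at the terminal time.

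For (B), I would use the local central limit theorem together with the survival tail \eqref{eq:SG2} to establish the single-block bound $p^{\ball{0}{R}}_m(z_1,z_2)\ge c R^{-d}$ for $m\asymp R^2$ and $z_1,z_2\in\tilde B$; for larger $m$ one concatenates $\lceil m/R^2\rceil$ blocks via Chapman--Kolmogorov, each contributing a constant multiplicative factor, which produces $\exp(-m/(cR^2))$. The diagonal case $x=y$ with $n<R^2/2$ is immediate from the local central limit theorem when $\sqrt n\le d_R(x)R$ (no boundary effect), and from the half-space heat kernel estimate $p^{\mathrm{HS}}_n(x,x)\asymp n^{-d/2-1}(d_R(x)R)^2$ otherwise. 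I expect the sharp linear $d_R(x)$-dependence in (A) to be the main technical obstacle: softer arguments only give $d_R(x)^\alpha$ for some $\alpha<1$, and extracting $\alpha=1$ requires quantitative control of the harmonic measure all the way down to $\partial\ball{0}{R}$.
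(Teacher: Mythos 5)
Your proposal is sound in outline but takes a genuinely different and substantially longer route than the paper. The paper's proof invokes \cite[Proposition~6.9.4]{LL10} as a black box: this proposition already gives the bound \eqref{eq:ballHK} for $n\in[R^2/2,R^2]$, and the escape-from-the-boundary estimate you call (A) is then obtained by simply summing that bound over $z\in\ball{0}{R/2}$; the extension to $n\ge R^2$ is a single two-piece Chapman--Kolmogorov application, again feeding back into the same cited estimate (plus \cite[Corollaries~6.9.5,~6.9.6]{LL10} for the exponential factor). The diagonal case $n<R^2/2$ is handled by a neat trick you did not use: replace $\ball{0}{R}$ by a smaller ball $\ball{z}{\sqrt n}\subset\ball{0}{R}$ chosen so that $x$ has the same $\ell^1$-distance to its boundary, then apply the same cited proposition to the smaller ball; this avoids any half-space heat kernel or comparison argument. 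Your approach, in contrast, reproves the LL10 estimate essentially from scratch via a three-piece decomposition, harmonic measure, and local CLT; it is more self-contained but is really unfolding the proof of \cite[Proposition~6.9.4]{LL10}.

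The one place where your sketch has a genuine gap — which you partly flag yourself — is step (A). The harmonic-measure estimate gives $\bP_x(T_{\tilde B}<\tau_{\ball{0}{R}^c})\ge c\,d_R(x)$, but you need the fixed-time statement $\bP_x(S_{n_1}\in\tilde B,\,\tau_{\ball{0}{R}^c}>n_1)\ge c\,d_R(x)$ with $n_1\asymp R^2$. Passing from the elliptic bound to the fixed-time bound requires control of the law of $T_{\tilde B}$ conditionally on $\{T_{\tilde B}<\tau_{\ball{0}{R}^c}\}$: if the walk enters $\tilde B$ at a time $t_0$ close to $n_1$, one must still show $\bP_{S_{t_0}}(S_{n_1-t_0}\in\tilde B,\,\tau_{\ball{0}{R}^c}>n_1-t_0)\ge c$ uniformly over all remaining times $n_1-t_0\le n_1$, and this cannot be reduced to a single ``standard exit-time bound.'' One clean way to close it is to first show $\bP_x(T_{\ball{0}{R/4}}<\tau_{\ball{0}{R}^c}\wedge n_1/2)\ge c\,d_R(x)$ (e.g.\ via a chain of annuli with exit-time control in each, or via a parabolic boundary Harnack inequality — which is exactly the engine behind the LL10 proposition) and then propagate by an interior LCLT bound; but as written, the linear $d_R(x)$ decay at a prescribed time is not yet established. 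The paper avoids this issue entirely by citing LL10 for exactly this estimate.
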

\begin{proof}
When $n\in [R^2/2,R^2]$, this is a consequence of~\cite[Proposition~6.9.4]{LL10}. For $n\ge R^2$, we use the Chapman--Kolmogorov identity to obtain
\begin{equation}
p^{\ball{0}{R}}_n(x,y)
\ge \sum_{z\in\ball{0}{R/2}}p_{R^2/2}^{\ball{0}{R}}(x,z)p_{n-R^2/2}^{\ball{0}{R}}(z,y).
\end{equation}
The second factor is bounded from below by $cd_R(y)R^{-d}\exp\{-c^{-1}nR^{-2}\}$, uniformly in $z\in\ball{0}{R/2}$, by the second part of~\cite[Proposition~6.9.4]{LL10} and~\cite[Corollaries~6.9.5 and~6.9.6]{LL10}.
Then, we use the result for $n=R^2/2$ to obtain
\begin{equation}
 \sum_{z\in\ball{0}{R/2}}p^{\ball{0}{R}}(x,z)\ge c d_R(x).
\end{equation}
Combining the above two estimates, we obtain~\eqref{eq:ballHK}.

Next, let $x=y\in\ball{0}{R}$, which forces $n\in 2\N$. For $n \in\{0, 2\}$, the left-hand side of~\eqref{eq:ballHK} is larger than $(2d)^{-2}$. For $n\in [4,R^2/2]$, we can find a ball $\ball{z}{\sqrt{n}}$ such that ${\rm dist}_{\ell^1}(x,\partial{\ball{z}{\sqrt{n}}})={\rm dist}_{\ell^1}(x,\partial{\ball{0}{R}})$. Applying the first part of~\cite[Proposition~6.9.4]{LL10} to this ball, we obtain~\eqref{eq:ballHK} in this case.
\end{proof}
\section{Proof of the lower bound in Theorem~\ref{thm:LDP}}
\label{sec:LDPlower}
We first show a lower bound on the survival probability with a fixed endpoint, which in particular implies the lower bounds in Theorem~\ref{thm:LDP}.
\begin{proposition}
\label{prop:LDPlower}
There exists $c_{\text{\upshape\ref{prop:LDPlower}}}>0$ such that when $\epsilon>0$ is small depending on $d$ and $p$ and $|x|\le \epsilon\rad^d$,
\begin{equation}
\P\otimes \bP(\tau_\Oi>N, S_N=x)
\ge \exp\left\{-{\rm dist}_\beta\left(x,\ball{0}{2\rad}\right)-c_{\text{\upshape\ref{prop:LDPlower}}}\epsilon(|x|\vee \rad)\right\}\P\otimes \bP(\tau_\Oi>N)
\label{eq:LDPlower}
\end{equation}
for all sufficiently large $N$. Furthermore, under the same condition,
\begin{equation}
\begin{split}
 \P\otimes \bP\left(\tau_\Oi>\tau_x^N\right)
& \ge \exp\left\{-c(d,p)N^{\frac{d}{d+2}}-{\rm dist}_\beta\left(x,\ball{0}{2\rad}\right)-{c_{\text{\upshape\ref{prop:LDPlower}}}} \delta_{N,x}N^{\frac{d}{d+2}}\right\}
\end{split}
\label{eq:pf}
\end{equation}
for all sufficiently large $N$, where $c(d,p)$ and $\delta_{N,x}$ are defined in~\eqref{eq:DV} and~\eqref{eq:def_delta}, respectively.
\end{proposition}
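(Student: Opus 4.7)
The plan is to construct an explicit two-phase survival-and-crossing strategy and exploit the spatial independence of the Bernoulli obstacle field.

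\textbf{Step 1 (Geometric setup).} Let $z^{\ast}\in\ball{0}{2\rad}$ minimize $z\mapsto\beta(x-z)$, so $\beta(x-z^{\ast})=\text{dist}_\beta(x,\ball{0}{2\rad})$. Define the shifted center $\cent:=[(1-\epsilon)z^{\ast}/2]\in\ball{0}{\rad}$, the confinement ball $V_1:=\ball{\cent}{(1-\epsilon)\rad}$, and the intermediate target $y:=[(1-\epsilon)z^{\ast}]\in\partial V_1$. By construction, both $0$ and $y$ lie in $V_1$ at $\ell^1$-distance at least $c\epsilon\rad$ from $\partial V_1$, and $y$ sits on the ray from $0$ through $z^{\ast}$. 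Using the equivalence of $\beta$ with the Euclidean norm,
\begin{equation*}
\beta(x-y)\le\beta(x-z^{\ast})+\beta(z^{\ast}-y)\le\text{dist}_\beta(x,\ball{0}{2\rad})+c\epsilon(|x|\vee\rad).
\end{equation*}
When $|x|\le(2-c\epsilon)\rad$ we have $\text{dist}_\beta(x,\ball{0}{2\rad})=0$, and I would simply take $y:=x$ and omit the crossing phase below.

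\textbf{Step 2 (Decomposition and lower bounds).} Set $T_0:=\lceil K|x-y|\rceil\vee 1$ with $K$ large enough for \eqref{eq:shape_thm} to be effective at this length; since $|x|\le\epsilon\rad^d$, one has $T_0\le N-\rad^2$. Using the independence of $\Oi\cap V_1$ and $\Oi\cap V_1^c$ together with the strong Markov property at time $N-T_0$,
\begin{equation*}
\P\otimes\bP(\tau_\Oi>N,\,S_N=x)\ \ge\ \Phi_1\cdot\Phi_2,
\end{equation*}
where $\Phi_1=\P\otimes\bP(\tau_\Oi>N-T_0,\,S_{[0,N-T_0]}\subset V_1,\,S_{N-T_0}=y)$ and $\Phi_2=\P\otimes\bP_y(\tau_\Oi>T_0,\,S_{[1,T_0]}\subset V_1^c,\,S_{T_0}=x)$. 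For $\Phi_1$ I would \emph{not} attempt a direct partition-function computation — the $o(N^{d/(d+2)})$ correction in \eqref{eq:DV79} is far too imprecise. Instead, I would use \eqref{eq:SBP} to reduce to configurations containing a vacant ball of radius $(1-\epsilon)\rad$ centered somewhere in $\ball{0}{\rad}$, the translation invariance of the Bernoulli field to pin that center at $\cent$, and the heat-kernel bound \eqref{eq:ballHK} inside $V_1$ to pin the endpoint at $y$. The resulting inequality is $\Phi_1\ge c\rad^{-d}\,\P\otimes\bP(\tau_\Oi>N)$, and its logarithmic loss $O(\log N)$ is absorbable into $c\epsilon(|x|\vee\rad)$. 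For $\Phi_2$ I would apply \eqref{eq:shape_thm} to the walk from $y$ to get $\P\otimes\bP_y(\tau_\Oi>\tau_x)\ge\exp(-\beta(x-y)-o(|x-y|))$, restrict to paths lying in a half-space on the $x$-side of $\partial V_1$ (cost $e^{-o(|x-y|)}$ since $y$ is the extreme point of $V_1$ in the $z^{\ast}$-direction), and convert ``$\tau_x<\infty$'' into ``$\tau_x=T_0$'' by averaging over arrival times (polynomial factor $1/T_0$).

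\textbf{Step 3 (Conclusion and main obstacle).} Multiplying the two bounds and invoking Step 1 yields \eqref{eq:LDPlower}. The displayed \eqref{eq:pf} is then immediate: $\{\tau_\Oi>N,\,S_N=x\}\subset\{\tau_\Oi>\tau_x^N\}$, so \eqref{eq:LDPlower} combined with the Donsker--Varadhan lower bound from \eqref{eq:DV} gives the claim, once one verifies that $c\epsilon(|x|\vee\rad)+o(N^{d/(d+2)})\le c_{\text{\upshape\ref{prop:LDPlower}}}\delta_{N,x}N^{d/(d+2)}$ — straightforward from \eqref{eq:def_delta} in each of the regimes $|x|\le\rad$ and $\rad<|x|\le\epsilon\rad^d$. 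The main obstacle is the $\Phi_1$ step: a naive strategy paying $p^{|V_1|}$ for the vacant ball and optimizing the radius separately would incur an additional $\exp(\Theta(\epsilon^2\rad^d))$ factor from the quadratic deviation of the Faber--Krahn functional near its minimum, which is incompatible with the required precision $\exp(c\epsilon(|x|\vee\rad))$. The crucial move is therefore comparative: we use \eqref{eq:SBP} to \emph{shift} an already existing vacant ball to our prescribed center $\cent$ at only polynomial cost, without ever computing the partition function explicitly.
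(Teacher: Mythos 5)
Your decomposition has a fatal tension that the paper's proof avoids by using FKG rather than independence. To apply the spatial-independence argument, you must confine the first walk piece to $V_1=\ball{\cent}{(1-\epsilon)\rad}$ and the second to $V_1^c$. But \eqref{eq:SBP} only asserts that, conditionally on survival, the path is contained in $\ball{\cent}{(1+\epsilon)\rad}$ — the \emph{larger} ball. Demanding $S_{[0,N-T_0]}\subset\ball{\cent}{(1-\epsilon)\rad}$ is strictly more restrictive, and the cost of confining the walk to a ball of the ``wrong'' radius $(1-\epsilon)\rad$ is $\exp\{-\Theta(\epsilon^2 N^{d/(d+2)})\}$ by precisely the Faber--Krahn quadratic-deviation phenomenon you flag at the end. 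Enlarging $V_1$ to radius $(1+\epsilon)\rad$ doesn't save you either: then $y\in\partial V_1$ sits in the annulus where, by the limiting eigenfunction density (used by the paper, but contradicting your translation-invariance step below), $\mu_N(S_{N-T_0}=y)$ is far smaller than $\rad^{-d}$. There is simply no choice of $V_1$ that is both large enough to swallow the $\mu_N$-typical path at small cost and small enough that its boundary $y$ is where the walk is typically found. The paper sidesteps this entirely: it never asks the two pieces to be obstacle-disjoint. It bounds $\E[\bP(\text{piece }1)\cdot\bP_y(\text{piece }2)]\ge\E[\bP(\text{piece }1)]\,\E[\bP_y(\text{piece }2)]$ using the FKG inequality for the product measure, valid because both survival probabilities are decreasing in $\Oi$. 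This one tool is what makes the construction close, and your write-up does not invoke it.

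Two further points that need repair. First, ``translation invariance of the Bernoulli field to pin that center at $\cent$'' is not a valid step: $\mu_N$ conditions on $\{\tau_\Oi>N\}$ for a walk started at $0$, which destroys translation invariance, so the location of $\cent$ under $\mu_N$ is genuinely biased; the paper uses the nontrivial limit law $\varrho_n^{-1}\cent\Rightarrow\phi_1(x)\,\dd x$ from \cite{Szn91b,Pov99} and positivity of $\phi_1$ to get a uniform-in-$N$ lower bound. Second, your geometric claim fails at the boundary of the parameter range: if $|z^*|=2\rad$ then $\cent=(1-\epsilon)\rad\,\be$, $V_1=\ball{(1-\epsilon)\rad\,\be}{(1-\epsilon)\rad}$, and $0$ lies exactly on $\partial V_1$, not at $\ell^1$-distance $c\epsilon\rad$ from it — fixable by shrinking the target ball to radius $(2-4\epsilon)\rad$ as the paper does. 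Finally, your ``polynomial factor $1/T_0$ from averaging over arrival times'' is hiding a genuine step: you must convert $\{\tau_x<\infty\}$ into $\{S_{T_0}=x\}$, which requires both a bound on $\E\otimes\bE[\tau_x\mid\tau_\Oi>\tau_x]\le C|x-y|$ (from \cite{KM12}) and having the walk wait in a small vacant ball around $x$ (the paper imposes $\Oi\cap\ball{x}{R}=\emptyset$ with $R=|x-y|^{1/2d}$); neither appears in your sketch.
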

\begin{proof}
\begin{figure}
 \centering
 \includegraphics{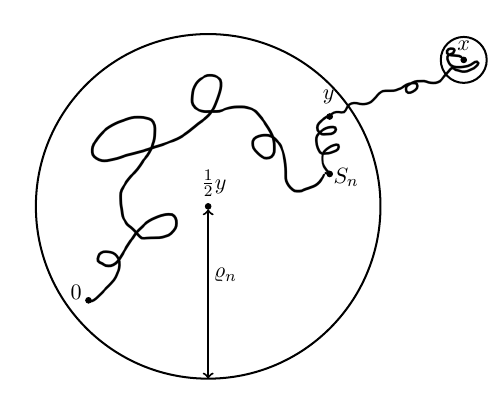}
\caption{The strategy to achieve the large deviation lower bound in Proposition~\ref{prop:LDPlower}. Since $\lim_{N\to\infty}N/n=1$ for $n$ defined in~\eqref{eq:def_n}, the large ball has radius almost $\rad$.
\label{fig:LDPlower}}
\end{figure}
We start by introducing several objects used in this proof. Let us first assume $|x|\ge 2\rad$ and let $y\in\ball{0}{(2-4\epsilon)\rad}$ be such that $\beta(x-y)={\rm dist}_\beta\left(x,\ball{0}{(2-4\epsilon)\rad}\right)$. Then for $M>0$ to be chosen later in~\eqref{eq:(iii)} depending only on $d$ and $p$, define
\begin{equation}
 n=N-M|x-y|-\rad^2\ge N-2M\rad^d.
\label{eq:def_n}
\end{equation}
Roughly speaking, we consider the following strategy: There is a ball of radius $\varrho_n$ centered around $\tfrac12 y$ which is free of obstacles, and we let the random walk (i) stay inside that ball up to time $n$, (ii) get close to $y$ in the next $\rad^2$ steps, (iii) go to $x$ in the remaining $M|x-y|$ steps (See Figure~\ref{fig:LDPlower}). The cost up to (ii) is comparable to $\P\otimes\bP(\tau_\Oi>N)$ while the cost for (iii) is measured by $\exp\{-\beta(x-y)\}$. The following argument makes this outline rigorous.

It is proved in~\cite{DFSX18} that for $\mathcal{x}_n$ in~\eqref{eq:SBP} and any $\epsilon>0$,
\begin{equation}
 \P\otimes\bP\left(
\Oi\cap\ball{\mathcal{x}_n}{(1-\epsilon)\varrho_n}=\emptyset\cond \tau_\Oi>n\right)\to 1
\end{equation}
as $n\to\infty$. Moreover, we know from~\cite{Szn91b,Pov99} that the distribution of $\varrho_n^{-1}\mathcal{x}_n$ converges to $\phi_1(x)\dd x$, where $\phi_1$ is the $L^1$-normalized principal eigenfunction of the Dirichlet Laplacian in $\ball{0}{1}\subset\R^d$. Since $\phi_1$ is positive and continuous inside $\ball{0}{1}$, there exists $c(\epsilon)>0$ such that
\begin{equation}
\P\otimes\bP\left(\mathcal{x}_n \in \ball{\tfrac12 y}{\epsilon\varrho_n}\cond \tau_\Oi>n\right) \ge c(\epsilon)
\end{equation}
for all $n\ge 1$. Recall also that~\cite[Lemma~4.5]{DFSX18} shows
\begin{equation}
 \P\otimes\bP\left(S_n\in \ball{\mathcal{x}_n}{(1-4\epsilon)\varrho_n}\mid \tau_\Oi>n\right)\to 1
\end{equation}
as $n\to\infty$ and $\epsilon\to 0$.
Summarizing the above considerations, when $\epsilon>0$ is sufficiently small, we have
\begin{equation}
\begin{split}
&\P\otimes\bP\left(S_n\in \ball{\tfrac12 y}{(1-3\epsilon)\varrho_n},  \Oi\cap\ball{\tfrac12 y}{(1-2\epsilon)\varrho_n}=\emptyset,\tau_\Oi>n\right)\\
&\quad \ge \frac{c(\epsilon)}{2} \P\otimes\bP\left(\tau_\Oi>n\right)
\end{split}
\end{equation}
for all sufficiently large $n$.
On the event on the left-hand side, we further let the random walk go to $y$ inside $\ball{\tfrac12 y}{(1-2\epsilon)\varrho_n}$ during the time interval $[n,n+\rad^2]$. Then using the Markov property at time $n$ and the random walk estimate~\eqref{eq:ballHK}, we obtain
\begin{equation}
\begin{split}
\P\otimes\bP\left(S_{n+\rad^2}= y, \tau_\Oi>n+\rad^2\right)
\ge \frac{c(\epsilon)}{\varrho_n^d} \P\otimes\bP\left(\tau_\Oi>n\right)
\end{split}
\label{eq:upto(ii)}
\end{equation}
for all sufficiently large $n$.

Next we let the random walk go from $y$ to $x$ during the time interval $[n+\rad^2, N]$ without hitting the obstacles. By imposing an extra condition $\Oi\cap\ball{x}{R}=\emptyset$ for $R=|x-y|^{1/2d}$, the probability of this last piece is bounded from below by
\begin{equation}
\begin{split}
& \P\otimes\bP_{y}\left(S_{M|x-y|}=x, \tau_\Oi>M|x-y|\right)\\
&\quad \ge \E\otimes\bE_{y}\left[p_{M|x-y|-\tau_x}^{\ball{x}{R}}(x,x) \colon \Oi\cap\ball{x}{R}=\emptyset, \tau_x<\tau_\Oi\wedge {M}|x-y|\right]\\
&\quad \ge p^{|\ball{x}{R}|}\min_{{k\in 2\Z: {0}\le k\le M|x-y|}}p_k^{\ball{x}{R}}(x,x)\P\otimes\bP_{y}\left(\tau_x<\tau_\Oi\wedge M|x-y|\right),
\end{split}
\label{eq:hitting_x}
\end{equation}
where we have applied the FKG inequality to $1_{\{\Oi\cap\ball{x}{R}=\emptyset\}}$ and $\bP_{y}(\tau_x<\tau_\Oi\wedge M|x-y|)$,
which are decreasing functions in the obstacle field. 
Due to the random walk estimate~\eqref{eq:ballHK}, the above $p_{k}^{\ball{x}{R}}(x,x)$ is bounded from below by $cR^{-d}\exp\{-c^{-1} M|x-y|/R^2\}$. Since we have chosen $R=|x-y|^{1/2d}$, it follows that
\begin{equation}
p^{|\ball{x}{R}|}\min_{k\in2\Z: k\le M|x-y|}p_k^{\ball{x}{R}}(x,x)
\ge \exp\{-c M|x-y|^{1-1/d}\}.
\end{equation}
To bound the third factor in the third line of~\eqref{eq:hitting_x}, we use a result in~\cite[Theorem~1.1]{KM12} which says that $\E\otimes\bE[\tau_z\mid \tau_\Oi>\tau_z]\le C|z|$. From this and the Markov inequality, it follows that
\begin{equation}
\begin{split}
 \P\otimes\bP_{y}\left(\tau_x<\tau_\Oi\wedge {M}|x-y|\right)
 & \ge \frac12\P\otimes \bP_{y}(\tau_\Oi > \tau_x)\\
 & \ge \exp\{-(1+\epsilon)\beta(x-y)\}
\end{split}
\label{eq:(iii)}
\end{equation}
when $\epsilon$ is small and $M, N$ are large, where in the second inequality we have used~\eqref{eq:shape_thm} and that $|x-y| \ge c\epsilon\rad$ for $|x|\ge 2\rad$.

Finally, since $\bP(S_{n+\rad^2}= y, \tau_\Oi>n)$ and $\bP_{{y}}(S_N=x, \tau_\Oi>M|x-y|)$ are both decreasing in $\Oi$, we can use the FKG inequality to deduce from~\eqref{eq:upto(ii)}--\eqref{eq:(iii)} that
\begin{equation}
\begin{split}
&\P\otimes\bP\left(S_N=x, \tau_\Oi>N\right)\\
&\quad\ge \E\left[\bP(S_{n+\rad^2}= y, \tau_\Oi>n+\rad^2)\bP_{y}(S_{M|x-y|}=x, \tau_\Oi>M|x-y|)\right]\\
&\quad\ge \exp\left\{-(1+\epsilon)\beta(x-y)-c\epsilon(|x|\vee\rad)\right\}\P\otimes\bP\left(\tau_\Oi>n\right)
\end{split}
\end{equation}
for all sufficiently large $N$. Since $\beta(x-y)\le c|x|$, this concludes the proof of~\eqref{eq:LDPlower} in the case $|x|\ge 2\rad$.

Let us turn to the case $|x|<2\rad$. If we assume a slightly stronger condition $|x|\le (2-4\epsilon)\rad$, then we have $y=x$ and $n=N-\rad^2$ and hence~\eqref{eq:upto(ii)} gives us the desired bound. If $(2-4\epsilon)\rad \le |x|\le 2\rad$, then we set $y$ as before and let $n=N-|y-x|_1$. We follow the same argument up to~\eqref{eq:upto(ii)}. Then instead of~\eqref{eq:(iii)}, we fix a path $\pi(y,x)$ connecting $y$ and $x$ with $|y-x|_1$ steps and use
\begin{equation}
\begin{split}
\P\otimes\bP_y\left(S_{|y-x|_1}=x, \Oi\cap \pi(y,x)=\emptyset\right)
&=\left(\frac{p}{2d}\right)^{|y-x|_1}\\
&\ge \exp\left\{-c\epsilon \rad\right\}.
\end{split}
\end{equation}
Then following the same argument as above, we obtain~\eqref{eq:LDPlower} in this case.

The second assertion~\eqref{eq:pf} follows from~\eqref{eq:LDPlower} and the bound
\begin{equation}
 \P\otimes\bP(\tau_\Oi>N)\ge \exp\left\{-c(d,p)N^{\frac{d}{d+2}}-cN^{\frac{d-1}{d+2}}\right\}
\end{equation}
proved in~\cite[Proposition~2.1]{Bol94} by making $c_{\text{\upshape\ref{prop:LDPlower}}}$ larger.
\end{proof}
As a consequence of Proposition~\ref{prop:LDPlower}, we have a crude upper bound on $\tau^N_x$, the first hitting time of $x$ after $N$:
\begin{corollary}
\label{cor:toolong}
There exists $c_{\ref{cor:toolong}}>0$ such that when $\epsilon>0$ is small depending on $d$ and $p$ and $|x|\le \epsilon\rad^d$,
\begin{equation}
\mu_{N,x}\left(\tau_x^N> 2N\right) \le \exp\left\{-c_{\ref{cor:toolong}}N^{\frac{d}{d+2}}\right\}
\label{eq:toolong}
\end{equation}
for all sufficiently large $N$.
\end{corollary}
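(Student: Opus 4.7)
\medskip

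\noindent\textbf{Proof plan.} Writing out the conditional probability as a ratio,
\begin{equation*}
\mu_{N,x}(\tau_x^N>2N)=\frac{\P\otimes\bP(\tau_\Oi>\tau_x^N,\,\tau_x^N>2N)}{\P\otimes\bP(\tau_\Oi>\tau_x^N)},
\end{equation*}
I would bound the numerator and denominator separately, then observe that the two leading Donsker--Varadhan rates differ because the numerator is forced to survive for time at least $2N$ rather than $N$.

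For the denominator, I apply~\eqref{eq:pf} from Proposition~\ref{prop:LDPlower} directly. Since $|x|\le \epsilon\rad^d$, we have $\delta_{N,x}\le c\epsilon$ (up to the polynomial $\rad^{-1/5}$ term), and the shape theorem~\eqref{eq:shape_thm} together with $\rad^d=\varrho_1^d N^{d/(d+2)}$ gives $\mathrm{dist}_\beta(x,\ball{0}{2\rad})\le c|x|\le c\epsilon N^{d/(d+2)}$. Substituting these bounds into~\eqref{eq:pf} yields
\begin{equation*}
\P\otimes\bP(\tau_\Oi>\tau_x^N)\ge\exp\left\{-\bigl(c(d,p)+C\epsilon\bigr)N^{d/(d+2)}\right\}
\end{equation*}
for all sufficiently large $N$, with $C=C(d,p)$ independent of $\epsilon$.

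For the numerator, I use the trivial inclusion $\{\tau_\Oi>\tau_x^N,\,\tau_x^N>2N\}\subset\{\tau_\Oi>2N\}$, so by the Donsker--Varadhan upper bound~\eqref{eq:DV},
\begin{equation*}
\P\otimes\bP(\tau_\Oi>\tau_x^N,\,\tau_x^N>2N)\le\exp\left\{-2^{d/(d+2)}c(d,p)N^{d/(d+2)}(1+o(1))\right\}.
\end{equation*}
Combining the two estimates gives
\begin{equation*}
\mu_{N,x}(\tau_x^N>2N)\le\exp\left\{-\bigl((2^{d/(d+2)}-1)c(d,p)-C\epsilon-o(1)\bigr)N^{d/(d+2)}\right\}.
\end{equation*}
Since $2^{d/(d+2)}>1$ for all $d\ge 1$, choosing $\epsilon$ small enough depending on $d$ and $p$ (compatibly with the smallness required in Proposition~\ref{prop:LDPlower}) makes the bracketed constant a strictly positive number $c_{\ref{cor:toolong}}$, which yields~\eqref{eq:toolong}.

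The argument is essentially a one-line comparison, so there is no real obstacle beyond verifying that the error terms in~\eqref{eq:pf} are genuinely of order $\epsilon N^{d/(d+2)}$ and can be absorbed by the strict inequality $2^{d/(d+2)}>1$; the only point requiring a bit of care is that $\mathrm{dist}_\beta(x,\ball{0}{2\rad})$ and $\delta_{N,x}N^{d/(d+2)}$ both remain at most a small multiple of $N^{d/(d+2)}$ under the hypothesis $|x|\le\epsilon\rad^d$.
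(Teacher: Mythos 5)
Your proposal is correct and follows essentially the same route as the paper: bound the numerator by $\P\otimes\bP(\tau_\Oi>2N)$ via the Donsker--Varadhan asymptotics~\eqref{eq:DV}, bound the denominator from below by~\eqref{eq:pf}, and use $2^{d/(d+2)}>1$ to absorb the $O(\epsilon)$ error from $\mathrm{dist}_\beta(x,\ball{0}{2\rad})$ and $\delta_{N,x}N^{d/(d+2)}$. The paper merely compresses this into two lines; your write-up makes the constant bookkeeping explicit, but there is no substantive difference.
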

\begin{proof}
By~\eqref{eq:DV79},
\begin{equation}
\begin{split}
\P\otimes \bP\left(\tau_\Oi>\tau^N_x>2N\right)
&\le \P\otimes \bP\left(\tau_\Oi>2N\right)\\
& \le  \exp\left\{-(c(d,p)+o(1))(2N)^{\frac{d}{d+2}}\right\},
\end{split}
\end{equation}
as $N\to\infty$. Comparing this with~\eqref{eq:pf}, we get~\eqref{eq:toolong}.
\end{proof}
\begin{remark}
\label{rem:macroball}
Due to Corollary~\ref{cor:toolong}, we may effectively discard the event $\{\tau_x^N>2N\}$ from our consideration. Thus in what follows, we will tacitly assume $\tau_x^N\le 2N$. Since we are considering the discrete time random walk, this in particular implies that all the points of $\Z^d$ appearing hereafter can be assumed to be in $\ball{0}{2N}$. In particular, we will replace the set of obstacles $\Oi$ by $\Oi\cup \ball{0}{2N}^c$.
\end{remark}

\section{Existence of a vacant ball}
\label{sec:vacant}
The main result in this section is the existence of a ball of radius almost $\rad$ which is free of obstacles under the measure $\mu_{N,x}$ with $|x|=o(\rad^d)$.
\begin{proposition}
\label{prop:vacant}
There exist $\cent(\Oi)\in\ball{0}{\rad}$ and $c_{\ref{eq:vacant}}>0$ such that when $\epsilon>0$ is small depending on $d$ and $p$ and $|x|\le \epsilon\rad^d$, the $\mu_{N,x}$-probability of the events
\begin{equation}
\label{eq:vacant}
\left\{\Oi\cap \ball{\cent}{(1-\delta_{N,x}^{c_{\ref{eq:vacant}}})\rad}=\emptyset\right\}
\end{equation}
 and
\begin{equation}
\label{eq:covering}
\left\{\ball{\cent}{(1-\delta_{N,x}^{c_{\ref{eq:vacant}}})\rad}\subset S_{[0,\tau_x^N]}\right\}
\end{equation}
are greater than $1-\exp\{-(\log N)^2\}$ for all sufficiently large $N$, where $\delta_{N,x}$ is defined in~\eqref{eq:def_delta}.
\end{proposition}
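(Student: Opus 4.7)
The plan is to leverage Proposition~\ref{prop:LDPlower}, whose partition function lower bound
\[
\P\otimes\bP(\tau_\Oi > \tau_x^N) \ge \exp\bigl\{-c(d,p)N^{d/(d+2)} - {\rm dist}_\beta(x,\ball{0}{2\rad}) - c\delta_{N,x} N^{d/(d+2)}\bigr\},
\]
combined with ${\rm dist}_\beta(x,\ball{0}{2\rad}) = O(|x|) = O(\delta_{N,x}N^{d/(d+2)})$ in the regime $|x|\le\epsilon\rad^d$, reduces the problem to showing that the $\P\otimes\bP$-probability of each bad event is bounded by $\exp\{-(c(d,p)+\eta)N^{d/(d+2)}\}$ for some $\eta$ of polynomial size in $\delta_{N,x}$. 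The quotient will then be at most $\exp\{-(\log N)^2\}$ with ample room to spare.

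The first step produces an \emph{almost}-vacant ball by the coarse-graining scheme of~\cite{DX18} (equivalently, the enlargement of obstacles method of~\cite{Szn97a}). Tile $\ball{0}{2N}$ (recall Remark~\ref{rem:macroball}) with boxes of side length $\rad^{1-\alpha}$ for a small $\alpha>0$, and pass to a coarse obstacle configuration by closing up every box meeting $\Oi$. The walk trajectory up to $\tau_x^N$ then lies in a single connected component $\boldsymbol{U}$ of the complement. If $\boldsymbol{U}$ fails to contain a Euclidean ball of radius $(1-\delta_{N,x}^{c_1})\rad$, then a quantitative form of Faber--Krahn stability applied to the Donsker--Varadhan functional in~\eqref{eq:DV79} forces
\[
\boldsymbol{\lambda_U} + {\rm vol}(\boldsymbol{U})\log\tfrac{1}{p} \ge (1+c\delta_{N,x}^{c_2})\, c(d,p) N^{-2/(d+2)}
\]
after rescaling to unit total mass. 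Plugging this into~\eqref{eq:SG} and summing over the polynomially many relevant coarse outputs yields a $\P\otimes\bP$-estimate smaller than the partition function by the factor $\exp\{-c\delta_{N,x}^{c_2} N^{d/(d+2)}\}$.

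Next, the density dichotomy lemma of~\cite{DFSX18} upgrades \emph{almost}-vacant to \emph{fully} vacant: under the annealed measure, any ball of radius $(1-o(1))\rad$ either has zero obstacle density or has density bounded below by a fixed positive constant, and the second alternative is excluded by the coarse-graining output. I would set $\cent(\Oi)$ to be the center of the largest resulting vacant ball with a deterministic tie-breaking rule. The containment $\cent\in\ball{0}{\rad}$ follows by comparing strategies: if $|\cent|>\rad$, the walk, which starts at the origin, must pay an additional Lyapunov crossing cost of at least $(|\cent|-\rad)\beta(\be_\cent)(1+o(1))$ by~\eqref{eq:shape_thm} to reach the ball, whereas translating the vacant ball so that its center lies inside $\ball{0}{\rad}$ only perturbs the $\log(1/p)\cdot{\rm vol}$ term at sub-leading order; the resulting discrepancy contradicts the partition function bound.

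For~\eqref{eq:covering}, I would first bootstrap from~\eqref{eq:vacant} the weaker statement that the walk spends time $N-o(N)$ inside $\ball{\cent}{\rad}$ (which is also the subject of Section~\ref{sec:outsideB}): any long excursion outside would, via~\eqref{eq:SG2}, incur an extra exponential cost incompatible with Proposition~\ref{prop:LDPlower}. Given this, for any target site $z\in\ball{\cent}{(1-\delta_{N,x}^{c_{\ref{eq:vacant}}})\rad}$, the heat kernel lower bound~\eqref{eq:ballHK} provides, within each sub-interval of length $\rad^2$ spent inside the vacant ball, a conditional probability $\Omega(\delta_{N,x}^{c})$ of hitting $z$; since $\Omega(\rad^d)$ such independent windows are available, the probability of missing $z$ is at most $\exp\{-c\delta_{N,x}^{c'}\rad^d\}$, and a union bound over the $O(\rad^d)$ sites gives~\eqref{eq:covering}. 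I expect the main obstacle to be Step~1, extracting a polynomial-in-$\delta_{N,x}$ Faber--Krahn deficit uniformly in the coarse-graining outputs; the extra $\mu_{N,x}$-conditioning contributes only the harmless ${\rm dist}_\beta$-term in the partition function and has no further effect on the argument.
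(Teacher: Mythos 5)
Your overall plan mirrors the paper's: it is exactly the two-lemma approach of the paper, namely the coarse-graining (Lemma~\ref{lem:almost}) to produce an almost-vacant ball, and the density dichotomy (Lemma~\ref{lem:odensity}) to upgrade it to a fully vacant one, combined by observing that a single obstacle deep inside $\ball{\cent}{\rad}$ would propagate to many obstacles and contradict~\eqref{eq:almost}. However, two of your steps contain genuine errors.

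First, the coarse-graining step as written is incorrect. Closing up every box meeting $\Oi$ does \emph{not} imply that the walk trajectory lies in a connected component $\boldsymbol{U}$ of the complement of the closed boxes. The walk is only constrained to avoid $\Oi$ itself; it freely traverses boxes that contain a few obstacles. So the set $\boldsymbol{U}$ to which you then apply quantitative Faber--Krahn is not correctly identified and does not contain the trajectory. The paper's Section~\ref{appendix} instead compares $\lambda_{\fregion}$ with the eigenvalue on a low sub-level set $\Omega_\eta$ of the principal eigenfunction, and uses low-density (rather than $\Oi$-meeting) boxes $\Ei(\iota,\rho)$ purely as an entropy-reduction device for the $\P$-bound. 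The conclusion of your Step~1 is correct (it is Lemma~\ref{lem:almost}), but the mechanism you describe would not yield it.

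Second, your covering argument takes a different route from the paper and overstates the relevant hitting probability. The paper obtains~\eqref{eq:covering} from the same two lemmas by a switching argument (as in~\cite[Lemma~3.2]{DFSX18}): if the walk never visits a site $z$ deep inside the ball, the event $\{\tau_\Oi > \tau_x^N\}$ is insensitive to whether $z\in\Oi$, so at cost $(1-p)^{-1}$ one may place an obstacle at $z$; Lemma~\ref{lem:odensity} then forces $\gtrsim (\delta_{N,x}^{c}\rad)^d$ obstacles in $\ball{\cent}{\rad}$, contradicting Lemma~\ref{lem:almost}. Your heat kernel alternative is not just longer (it relies on the time-outside control of Section~\ref{sec:outsideB}, which itself depends on Lemma~\ref{lem:slow_cross}); the per-window estimate is also wrong. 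A walk confined to a ball of radius $\rad$ hits a specified site $z$ within a window of length $\rad^2$ with probability of order $\rad^{2-d}$ for $d\ge 3$ (and $1/\log\rad$ for $d=2$), not $\Omega(\delta_{N,x}^c)$, and the windows are not independent (one must condition on the endpoints and use the Markov property of the bridge). With the correct order the bound becomes $\exp\{-c\rad^2\}$ rather than your $\exp\{-c\delta_{N,x}^{c'}\rad^d\}$, which is still enough, but the argument as written does not deliver it. Finally, your formulation of the density dichotomy as a statement about the whole ball (``zero density or density bounded below by a fixed constant'') is not what Lemma~\ref{lem:odensity} says; it is a local lower bound around each obstacle $v$, and the density it forces in $\ball{\cent}{\rad}$ depends on $\delta_{N,x}$.
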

We deduce Proposition~\ref{prop:vacant} from the following two lemmas. The first one asserts that there is a ball of radius $\rad$ which is almost free of obstacles; the second one asserts that every obstacle is well surrounded by others.
\begin{lemma}
\label{lem:almost}
There exists $c_{\ref{lem:almost}}>0$ and $\cent(\Oi)\in\ball{0}{\rad}$ such that when $\epsilon>0$ is small depending on $d$ and $p$ and $|x|\le \epsilon\rad^d$,
\begin{equation}
\mu_{N,x}\left(|\Oi\cap \ball{\cent}{\rad}|
 \ge \delta_{N,x}^{c_{\ref{lem:almost}}}N^{\frac{d}{d+2}}\right)
\le \exp\left\{-{c_{\ref{lem:almost}}} \delta_{N,x} N^{\frac{d}{d+2}}\right\}
\label{eq:almost}
\end{equation}
for all sufficiently large $N$.
\end{lemma}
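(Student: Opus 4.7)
I would define $\cent(\Oi) \in \ball{0}{\rad}$ to be a minimizer of $y \mapsto |\Oi \cap \ball{y}{\rad}|$ over $y \in \ball{0}{\rad}$, with a canonical tie-breaking rule. Under this choice, the event in~\eqref{eq:almost} is precisely that \emph{every} $\rad$-ball with center in $\ball{0}{\rad}$ contains at least $K := \delta_{N,x}^{c_{\ref{lem:almost}}} N^{d/(d+2)}$ obstacles, which is what I will argue is exponentially unlikely under $\mu_{N,x}$.

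The proof is a numerator/denominator comparison. For the denominator, Proposition~\ref{prop:LDPlower} gives, under $|x| \leq \epsilon \rad^d$ (using ${\rm dist}_\beta(x,\ball{0}{2\rad}) \leq c|x| \leq C \delta_{N,x} N^{d/(d+2)}$ via~\eqref{eq:shape_thm} and the definition of $\delta_{N,x}$), the lower bound
\begin{equation*}
\P \otimes \bP(\tau_\Oi > \tau_x^N) \geq \exp\left\{-c(d,p) N^{d/(d+2)} - C \delta_{N,x} N^{d/(d+2)}\right\}.
\end{equation*}
For the numerator, the target estimate is
\begin{equation*}
\P \otimes \bP\left(|\Oi \cap \ball{\cent}{\rad}| \geq K,\ \tau_\Oi > \tau_x^N\right) \leq \exp\left\{-c(d,p) N^{d/(d+2)} - c K\right\}.
\end{equation*}
Taking the ratio, the bound becomes $\mu_{N,x}(\cdots) \leq \exp\{-cK + C \delta_{N,x} N^{d/(d+2)}\}$. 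Since $\delta_{N,x}^{c_{\ref{lem:almost}}} \gg \delta_{N,x}$ for $c_{\ref{lem:almost}} < 1$ and $\delta_{N,x}$ small (which holds once $\epsilon$ is small), the first term dominates the second, and choosing $c_{\ref{lem:almost}}$ sufficiently small yields $\mu_{N,x}(\cdots) \leq \exp\{-c_{\ref{lem:almost}} \delta_{N,x} N^{d/(d+2)}\}$ as required.

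The numerator bound is the real content. I would prove it by adapting the coarse graining scheme of \cite{DX18} (alternatively, the method of enlargement of obstacles of \cite{Szn97a}). The idea is to introduce a mesoscopic scale $1 \ll \ell \ll \rad$, partition the relevant macroscopic neighborhood of the origin into cubes of side $\ell$, and classify each cube as sparse or dense according to whether its obstacle count falls below or above a threshold tuned to $\log(1/p)$. Conditional on $\Oi$, the survival probability up to $\tau_x^N$ is controlled by~\eqref{eq:SG} in terms of the principal Dirichlet eigenvalue of the vacant set. On the event that no $\rad$-ball centered in $\ball{0}{\rad}$ is almost vacant, no such ball fits entirely inside the union of sparse cubes, and a quantitative Faber--Krahn inequality forces this eigenvalue to exceed $\lambda_{\ball{0}{\rad}}$ by at least $c K/\rad^{d+2}$. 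Integrating the resulting gain $\exp\{-c N \cdot K/\rad^{d+2}\} = \exp\{-c K\}$ (recalling $N \asymp \rad^{d+2}$) over obstacle configurations yields the desired bound.

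\textbf{Main obstacle.} The technical heart is the quantitative Faber--Krahn stability needed to convert the statement ``every $\rad$-ball centered in $\ball{0}{\rad}$ carries at least $K$ extra obstacles'' into a genuine gain of order $K/\rad^{d+2}$ in the Dirichlet principal eigenvalue of the vacant set. The subtlety is that this hypothesis is local (only guarantees excess obstacles inside a restricted region of candidate centers), so one must simultaneously establish a macroscopic confinement of the walk under $\mu_{N,x}$ to $\ball{0}{C\rad}$, which is where the hypothesis $|x| \leq \epsilon \rad^d$ is crucial. Both ingredients appear in closely related forms in \cite{DX18} and \cite{Szn97a}, and the work is to combine them cleanly for the endpoint-constrained measure $\mu_{N,x}$.
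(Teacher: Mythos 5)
Your numerator/denominator strategy, the use of Proposition~\ref{prop:LDPlower} for the denominator, and the invocation of the coarse-graining scheme of~\cite{DX18} together with a quantitative Faber--Krahn inequality are all the right ingredients --- this is indeed the skeleton of the paper's argument. However, there is a genuine gap in fixing $\cent$ \emph{a priori} as a minimizer over $y\in\ball{0}{\rad}$. Under that choice your target event is ``no $\rad$-ball centered in $\ball{0}{\rad}$ is almost vacant,'' and you want this to force $\lambda_{\ball{0}{2N}\setminus\Oi}$ to exceed the optimal ball eigenvalue by order $K/\rad^{d+2}$. That implication fails: when $|x|\gg\rad$, the principal eigenvalue of the full vacant set $\ball{0}{2N}\setminus\Oi$ can be nearly optimal because of a vacant ball located on the $\beta$-geodesic between $0$ and $x$, far from the origin, while the obstacle field inside $\ball{0}{2\rad}$ is entirely typical. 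You flag this and propose to supply ``macroscopic confinement of the walk to $\ball{0}{C\rad}$'' as the missing input, but for $|x|\gg\rad$ the walk is not confined to $\ball{0}{C\rad}$ (it must reach $x$), and more fundamentally, the statement that the localization region lies near $0$ is essentially Proposition~\ref{prop:vacant}, which this lemma is an input to --- so building confinement into the proof of Lemma~\ref{lem:almost} would be circular.

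The paper breaks this circularity by never committing to a center in advance. Following~\cite{DX18}, it passes from the coarse-grained low-density set $\Ei(\iota,\rho)$ to the level set $\Omega_\eta$ of the actual principal eigenfunction $f$ on $\ball{0}{2N}\setminus\Oi$, transfers volume and eigenvalue control between the two via Lemmas~\ref{omegaep} and~\ref{tOmegaSize}, and then applies the quantitative Faber--Krahn inequality of~\cite{BDV15} to $\boldsymbol{\Omega}_\eta^+$ \emph{wherever it lies}; $\cent$ is \emph{defined} as the center of the resulting nearly-optimal ball $\boldsymbol{B}_\eta$. The eigenfunction thus locates the vacant ball for you, and no localization near the origin is assumed at this stage. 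To rescue your formulation you would need a separate argument that $\Omega_\eta^+$ must intersect $\ball{0}{C\rad}$ before the eigenvalue comparison is applied; as written, your sketch presupposes rather than proves this.
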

\begin{lemma}
\label{lem:odensity} For each $v\in \Z^d$, $l>0$ and $\delta>0$, let
 \begin{equation}
 E_l^\delta(v)=\left\{v\in \Oi \textrm{ and }{\frac{|\Oi \cap \ball{v}{l}|}{|\ball{v}{l}|}} < \delta \right\}.
 \end{equation}
Then there exists $c_{\ref{lem:odensity}}>0$ such that for sufficiently large $N$,
\begin{equation}
 \label{odensity}
 \mu_{N,x}\left(\bigcup_{v\in \ball{0}{2N}} \, \bigcup_{{(\log N)^3}\leq l\leq {\rad}} E_l^{c_{\ref{lem:odensity}}}(v)\right) \leq \exp\left\{-c_{\ref{lem:odensity}} (\log N)^3\right\}.
\end{equation}
\end{lemma}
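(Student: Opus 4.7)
The approach is a union-bound reduction to a single pair $(v,l)$, followed by a conditional-on-path Chernoff argument on the obstacle density, patched up with Lemma~\ref{lem:almost} in the geometrically awkward sub-cases.

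Since the set of pairs $(v,l)\in\ball{0}{2N}\times[(\log N)^3,\rad]$ is polynomial in $N$ (cf.\ Remark~\ref{rem:macroball}), it is enough to establish, for each fixed such pair,
\[
 \mu_{N,x}\bigl(E_l^{c_0}(v)\bigr)\le\exp\{-c_1 l^d\}
\]
for some constants $c_0=c_0(d,p)>0$ small and $c_1>0$. With $l\ge(\log N)^3$ and $d\ge 2$, the right-hand side is at most $\exp\{-c_1(\log N)^{3d}\}$, which easily absorbs the polynomial enumeration and delivers the required bound after summation.

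To prove the single-pair estimate, I would integrate out the walk first. Conditionally on a random walk trajectory $\pi$ reaching $x$ at time $\tau_x^N$ and on $\Oi\cap\ball{v}{l}^c$, the survival event reduces to $\Oi\cap\pi=\emptyset$; consequently $\Oi\cap\ball{v}{l}$ is an i.i.d.\ Bernoulli$(1-p)$ field on $\ball{v}{l}\setminus\pi$, and requiring $v\in\Oi$ forces $v\notin\pi$ together with one extra obstacle. Thus $|\Oi\cap\ball{v}{l}|$ is a shifted Binomial with $|\ball{v}{l}|-|\pi\cap\ball{v}{l}|-1$ trials, and a standard Chernoff bound gives a $\exp\{-c\,(|\ball{v}{l}|-|\pi\cap\ball{v}{l}|)\}$ tail for the density event $|\Oi\cap\ball{v}{l}|<c_0|\ball{v}{l}|$ as soon as $c_0<(1-p)/2$. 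Integrating this conditional bound against the path weight $p^{|\pi|}$ and normalizing by $\P\otimes\bP(\tau_\Oi>\tau_x^N)$ (which cancels) handles the regime $|\pi\cap\ball{v}{l}|\le(1-\eta)|\ball{v}{l}|$ for a fixed $\eta$ chosen compatibly with $c_0$, yielding the target $\exp\{-c_1 l^d\}$.

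The main obstacle is the complementary regime in which the walk covers nearly all of $\ball{v}{l}$. I would handle it by invoking Lemma~\ref{lem:almost} to locate the almost-vacant ball $\ball{\cent}{\rad}$ and splitting on whether $v\in\ball{\cent}{\rad}$. In the inside case, $v$ is one of at most $\delta_{N,x}^{c_{\ref{lem:almost}}}N^{d/(d+2)}$ exceptional obstacles within the almost-vacant ball, a set small enough to be absorbed by a further union bound. In the outside case, an elementary geometric argument gives $|\ball{v}{l}\cap\ball{\cent}{\rad}^c|\ge|\ball{v}{l}|/2$, and a walk filling $\ball{v}{l}$ would have to accumulate local time $\gtrsim l^d$ outside the almost-vacant ball; this should be controlled through the spectral-gap bound~\eqref{eq:SG2}. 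The delicate point is that the resulting tail must beat the $p^{-c_d l^d}$ price tag of any naive ``erase-all-obstacles-in-$\ball{v}{l}$'' modification, so the excursion estimate has to be combined carefully with Lemma~\ref{lem:almost} and the partition-function lower bound from Proposition~\ref{prop:LDPlower} rather than used in isolation.
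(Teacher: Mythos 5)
Your proposal takes a genuinely different route from the paper's. The paper (see the sketch in Section~5.2) runs a path/environment switching argument imported from the companion paper~\cite{DFSX18}: when the walk visits $\ball{v}{l/2}$ frequently, one erases all obstacles in $\ball{v}{l}$ and compares to $\{\Oi\cap\ball{v}{l}=\emptyset\}$; when it visits rarely, one deforms the walk to avoid $\ball{v}{l/2}$ and refreshes $\Oi\cap\ball{v}{l/2}$ to a typical Bernoulli sample (this is also the step that forces working with $\tau_\Oi>\tau_x^N$ rather than $S_N=x$, cf.~Remark~\ref{rem:switch}). You instead condition on the walk path and apply a Chernoff bound to the residual obstacle field. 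That part is correct: conditionally on the survivor path $\pi=S_{[0,\tau_x^N]}$, the field $\Oi\cap(\ball{v}{l}\setminus\pi)$ is still i.i.d.\ Bernoulli$(1-p)$, so whenever $\pi$ leaves an $\eta$-fraction of $\ball{v}{l}$ uncovered, $E_l^{c_0}(v)$ is a binomial lower-tail event of cost $\exp\{-cl^d\}$ uniformly over $\pi$, and the normalization cancels exactly as you say.

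The gap is in the complementary regime where $\pi$ covers more than a $(1-\eta)$-fraction of $\ball{v}{l}$, and neither of your two sub-cases closes. Inside ($v\in\ball{\cent}{\rad}$): Lemma~\ref{lem:almost} only bounds the cardinality $|\Oi\cap\ball{\cent}{\rad}|$; it gives no per-$v$ probability that a fixed $v$ lies in $\Oi$, hence no per-pair bound on $E_l^{c_0}(v)$ in this regime, and one cannot absorb into a union bound a probability that has not yet been estimated. Strengthening Lemma~\ref{lem:almost} to ``$\Oi\cap\ball{\cent}{\rad}=\emptyset$'' would be circular, since that is essentially Proposition~\ref{prop:vacant}, whose proof consumes the very lemma being proved. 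Outside ($v\notin\ball{\cent}{\rad}$): under $\mu_{N,x}$ the walk legitimately visits of order $\rad$ distinct sites outside $\ball{\cent}{\rad}$ during its approach from $0$ and its final run to $x$, and $\rad\gg(\log N)^{3d}$, so covering of order $l^d$ sites of $\ball{v}{l}\setminus\ball{\cent}{\rad}$ produces no contradiction; moreover~\eqref{eq:SG2} controls confinement \emph{times}, not the number of distinct sites visited on a transit, so it is not the right tool. The deeper obstruction is structural: once $\pi$ covers almost all of $\ball{v}{l}$, the low-density condition becomes automatic and the only remaining constraint is $v\in\Oi$, which costs only a factor $1-p$; no purely union-bound/Chernoff argument can then supply the required decay, which is precisely why the path-deformation step of the paper's second case (deform $\pi$ off $\ball{v}{l/2}$, then resample the local environment) appears unavoidable.
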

\begin{proof}
[Proof of Proposition~\ref{prop:vacant}]
If there is an obstacle deep inside the ball $B(\cent;\rad)$ found in Lemma~\ref{lem:almost}, then there are in fact many obstacles by Lemma~\ref{lem:odensity}, which contradicts~\eqref{eq:almost}. The other part~\eqref{eq:covering} can also be deduced from these lemmas in the same way as~\cite[Lemma~3.2]{DFSX18}.
\end{proof}
In the proof of~\eqref{eq:confine}, we need to know that it is hard for the random walk to stay outside the vacant ball. The next lemma gives us such an estimate. For technical reasons, we will consider a slightly smaller ball
\begin{equation}
\begin{split}
 B^-(z)&=\ball{z}{(1-2\delta_{N,x}^{c_{\ref{prop:vacant}}})\rad},\\
 B^-&=B^-(\cent).
\end{split}
\label{eq:def_B-}
\end{equation}
Recall from Remark~\ref{rem:macroball} that we enlarged the obstacles to $\Oi\cup\ball{0}{2N}^c$.
\begin{lemma}
\label{lem:slow_cross}
There exists $c_{\text{\upshape{\ref{lem:slow_cross}}}}>0$ such that when $\epsilon>0$ is small depending on $d$ and $p$ and $|x|\le \epsilon\rad^d$, for $t\ge \delta_{N,x}^{c_{\text{\upshape{\ref{lem:slow_cross}}}}}(\log N)^2\rad^2$, $\mu_{N,x}$-probability of the event (which depends only on $\Oi$)
\begin{equation}
\label{eq:slow_cross}
\left\{{\sup_{y\in\ball{0}{2N}}}\bP_y\left(S_{[0,t]}\cap \left({\Oi} \cup B^-\right)=\emptyset\right) \leq \exp\left\{-\delta_{N,x}^{-c_{\text{\upshape{\ref{lem:slow_cross}}}}} \rad^{-2} t\right\}\right\}
\end{equation}
is greater than $1-\exp\{-(\log N)^2\}$ for all sufficiently large $N$.
\end{lemma}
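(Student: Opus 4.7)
The plan is to reduce, via the spectral-gap bound \eqref{eq:SG}, to a lower bound on the principal Dirichlet eigenvalue $\lambda_W$ of $W:=\ball{0}{2N}\setminus(\Oi\cup B^-)$. Writing $V_y\subset W$ for the connected component of $y\in W$, one has $\bP_y(S_{[0,t]}\cap(\Oi\cup B^-)=\emptyset)=\bP_y(\tau_{V_y^c}>t)\le|V_y|^{1/2}\exp\{-t\lambda_{V_y}\}\le(CN)^{d/2}\exp\{-t\lambda_W\}$, so it suffices to prove that $\mu_{N,x}(\lambda_W\ge\delta_{N,x}^{-c'}\rad^{-2})\ge 1-\exp\{-(\log N)^2\}$ for some $c'>c_{\text{\upshape\ref{lem:slow_cross}}}$. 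The prefactor $N^{d/2}$ is absorbed because the time-range hypothesis yields $\delta_{N,x}^{-c'+c_{\text{\upshape\ref{lem:slow_cross}}}}(\log N)^2\gg\log N$.

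To estimate $\lambda_W$, I would work on the event of Proposition \ref{prop:vacant}, on which the ball $\tilde B:=\ball{\cent}{(1-\delta_{N,x}^{c_{\ref{eq:vacant}}})\rad}\supset B^-$ is obstacle-free, so the annular shell $\tilde B\setminus B^-\subset W$ has radial thickness $\delta_{N,x}^{c_{\ref{eq:vacant}}}\rad$. For any connected component $V$ of $W$, either (i) $V\subset W\setminus\tilde B$, or (ii) $V$ contains part of the shell, in which case $V$ is squeezed radially by $B^-$ on one side and by obstacles on the other outside $\tilde B$. In case (ii), a trial-function argument using a radial profile vanishing on $\partial B^-$ and localized near $\tilde B$ yields $\lambda_V\gtrsim\delta_{N,x}^{-2c_{\ref{eq:vacant}}}\rad^{-2}$, provided the ``extensions'' of $V$ beyond $\tilde B$ are controlled.

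The main step is to handle both the extensions in case (ii) and the components in case (i) via a ``no secondary large vacant region'' estimate: with $\mu_{N,x}$-probability $\ge 1-\exp\{-(\log N)^2\}$ no ball $\ball{z}{R_*}$ with $R_*:=\delta_{N,x}^{1/(2d)}\rad$ and $\ball{z}{R_*}\cap B^-=\emptyset$ is entirely vacant. This follows by a union bound over $z\in\ball{0}{2N}$: the extra vacancy of $\ball{z}{R_*}$ contributes a factor $\exp\{-cR_*^d\}=p^{|\ball{0}{R_*}|}$ in the numerator $\P\otimes\bP(\tau_\Oi>\tau_x^N,\Oi\cap\ball{z}{R_*}=\emptyset)$, while the remaining partition function only inflates by $\exp\{C\delta_{N,x}\rad^d\}$ relative to \eqref{eq:pf}, since the walk's optimal strategy is still to stay in $B^-$ rather than migrate to the smaller new vacant region. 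Because $R_*^d=\delta_{N,x}^{1/2}\rad^d\gg\delta_{N,x}\rad^d$, dividing by \eqref{eq:pf} yields $\mu_{N,x}(\Oi\cap\ball{z}{R_*}=\emptyset)\le\exp\{-cR_*^d/2\}$, and $O(N^d)$ such terms still sum to $\le\exp\{-(\log N)^2\}$. Combining this with the density dichotomy of Lemma \ref{lem:odensity} at scales up to $\rad$, the method of enlargement of obstacles of \cite{Szn97a} (equivalently the coarse-graining of \cite{DX18}) upgrades the geometric statement to the Faber-Krahn lower bound $\lambda_V\ge cR_*^{-2}=c\delta_{N,x}^{-1/d}\rad^{-2}$ for every component $V\subset W\setminus\tilde B$, and also bounds the extensions in case (ii) so as to preserve the shell-slab eigenvalue bound. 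The main obstacle is precisely this last geometric-to-analytic passage: a bare absence of large inscribed balls does not yield an eigenvalue bound for long thin channels, and the coarse-graining is what rules such channels out by effectively densifying the obstacles at scale $R_*$ outside $\tilde B$.
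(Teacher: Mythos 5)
Your reduction via \eqref{eq:SG} to a lower bound on $\lambda_W$ for $W=\ball{0}{2N}\setminus(\Oi\cup B^-)$ is sound and is in fact the strategy of the paper's \emph{alternative} proof in Appendix~A, not its main proof. The main proof (Section~\ref{appendix}) never estimates an eigenvalue of $W$ at all: working on the event of Corollary~\ref{cor:symmdiff}, it fixes a trajectory $S_{[0,t]}$ avoiding $\Oi\cup B^-$, and argues directly that either the walk spends $\gtrsim t/4$ of the time outside the low-density coarse-grained set $\Ei(\iota,\rho)$, in which case killing at rate $\gtrsim\rho$ per $(\iota\rad)^2$ block gives exponential decay, or it spends $\gtrsim 3t/4$ of the time inside $\Ei(\iota,\rho)\setminus B^-$, which is ruled out because that set has volume $O(\delta_{N,x}^{C}\rad^d)$ and hence by a heat-kernel/first-moment argument cannot trap the walk for long. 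That route sidesteps the geometric-to-analytic passage completely.

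The genuine gap in your proposal is exactly the one you flag yourself in the last sentence. The ingredients you assemble --- that no ball $\ball{z}{R_*}$ disjoint from $B^-$ is entirely vacant, and the density dichotomy of Lemma~\ref{lem:odensity} --- do not yield the bound $\lambda_V\gtrsim\delta_{N,x}^{-c}\rad^{-2}$ for a component $V$ of $W$. Faber--Krahn gives $\lambda_V\gtrsim|V|^{-2/d}$, which is useless unless you control $|V|$, and for a component that threads a long thin corridor you have no volume bound; the inscribed-ball estimate is strictly weaker than the local-thinness condition needed to run \cite[Prop.~2.4, pp.~160--161]{Szn98}, which requires $|\mathscr{U}_{\rm cl}\cap\rad(q+[0,1]^d)|=o(\rad^d)$ on \emph{every} unit cube at scale $\rad$. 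Two smaller issues: (a) your ``trial-function argument'' for case~(ii) would give an \emph{upper} bound on $\lambda_V$, not the lower bound you need, and the actual lower bound for thin slab-like regions again requires the MEO scaling machinery; (b) even the ``no secondary vacant ball'' claim is delicate --- the numerator $\P\otimes\bP(\tau_\Oi>\tau_x^N,\Oi\cap\ball{z}{R_*}=\emptyset)$ equals $p^{|\ball{z}{R_*}|}\E[\bP(\tau_\Oi>\tau_x^N)\mid\Oi\cap\ball{z}{R_*}=\emptyset]$, and FKG makes the conditional expectation \emph{larger}, so the claimed $\exp\{C\delta_{N,x}\rad^d\}$ inflation bound is itself a non-trivial comparison, not an immediate consequence of \eqref{eq:pf}. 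To make your route rigorous you would essentially need to import the clearing-set construction and Corollary~\ref{cor:symmdiff}-type control as in Appendix~A, at which point you have reproduced that appendix; alternatively, the argument of Section~\ref{appendix} shows how to avoid the eigenvalue bound entirely.
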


The proofs of Lemmas~\ref{lem:almost} and~\ref{lem:slow_cross} are given in Section~\ref{appendix}. In Appendix~A, we shall also provide alternative proofs by adapting the arguments in~\cite{Pov99}, which are based on the so-called method of enlargement of obstacles in~\cite{Szn97a}.

Lemma~\ref{lem:odensity} is an analogue of~\cite[Lemma~2.1]{DFSX18}. We will provide an outline of the argument in Section~\ref{sec:odensity}.

\subsection{Proofs of Lemmas~\ref{lem:almost} and~\ref{lem:slow_cross}}
\label{appendix}
We prove Lemmas~\ref{lem:almost} and~\ref{lem:slow_cross} using some concepts and results from the recent paper~\cite{DX18}, which proves a quenched localization result for the random walk conditioned to avoid $\Oi$. 
Let us explain the outline of the proof before delving into the details.
Recall that we denote by $\lambda_U$ the smallest Dirichlet eigenvalue of the discrete Laplacian $-\frac{1}{2d}\Delta$ in $U$, which is different from the notation in~\cite{DX18}.


For $\iota,\rho>0$, we introduce a set $\Ei(\iota,\rho)\subset\ball{0}{2N}$ in Definition~\ref{empty-set} which is a collection of large (but $o(\rad)$) boxes where the density of obstacles is low. We will show that $\Ei(\iota,\rho)$ is close to a ball with radius $\rad$ in symmetric difference. Then it follows that the ball is almost free of obstacles.
The purpose of this coarse graining is two-fold: it identifies the regions that actually contribute to the vacant ball (note that the number of open sites in $B(0; 2N)$ is of order $N^d \gg \rad^d$); and it reduces the entropy of the set of obstacle configurations and allows estimates of the form $\P(|\Ei(\iota,\rho)| = V)=(1-p)^{V(1+o(1))}$, as if there is only one configuration of $\Ei(\iota, \rho)$ with given volume $V$.

If we could show that $\Ei(\iota, \rho)$ correctly identifies where the random walk is localized in the sense that, the Dirichlet eigenvalue does not change much if we restrict the walk to $\Ei(\iota, \rho)$, namely,
\begin{equation}
 \lambda_{\ball{0}{2N}\setminus\Oi}\sim \lambda_{\Ei(\iota,\rho)},
\label{eq:EVE}
\end{equation}
then we could (formally) write
\begin{equation}
\begin{split}
 \P\otimes\bP\left(\tau_\Oi>\tau_x^N\right)
&\lesssim \E\left[\exp\left\{-N\lambda_{\ball{0}{2N}\setminus\Oi}\right\}\right]\\
&\lesssim \sum_{V=1}^{(4N+1)^d} \E\left[\exp\left\{-N\lambda_{\Ei(\iota,\rho)}\right\}\colon |\Ei(\iota,\rho)|=V\right]\\
&\approx \sup_{{U}\subset\Z^d} \exp\left\{-N\lambda_{U}-|U|\log\tfrac{1}{1-p}\right\}.
\end{split}
\label{eq:Laplace}
\end{equation}
The last approximation is justified by the fact that $V$ can only take $O(N^d)$ many values, which is of lower order than the exponential asymptotics. We can then apply a quantitative Faber-Krahn inequality to show that the dominant contribution comes from configurations of $\Ei$ that are close to a ball with radius $\rad$.

Unfortunately, it is not easy to prove~\eqref{eq:EVE} directly. Instead, we make a detour by comparing $\Ei(\iota,\rho)$ with a low level set $\Omega_\eta$ of the principal eigenfunction in $\ball{0}{2N}\setminus\Oi$ (see Definition~\ref{def-eign-f}). It is relatively easy to prove that $\lambda_{\Omega_{\eta}}$ well approximates $\lambda_{\ball{0}{2N}\setminus\Oi}$ when $\eta$ is small, and it is also relatively easy to prove that the eigenfunction is small on $\Ei(\iota,\rho)$, and as a consequence $\Ei(\iota,\rho)$ almost contains $\Omega_{\eta}$. See Lemma~\ref{omegaep} for the precise formulation. That lemma essentially allows us to carry out the argument around~\eqref{eq:Laplace} with $\Ei(\iota,\rho)$ replaced by $\Omega_{\eta}$.\\
\medskip

Now let us turn to the formal proof.
\begin{definition}
[Definition 5.1 in~\cite{DX18}]
\label{empty-set}
For $\iota, \rho>0$, a box of the form
\begin{equation}
 K_{\lfloor \iota\rad\rfloor}(x)= x+ [-\lfloor \iota\rad\rfloor,\lfloor \iota\rad\rfloor)^d \text{ for } x \in  (2\lfloor \iota\rad\rfloor+1)\Z^d
\label{eq:def_box}
\end{equation}
is said to be $(\iota\rad,\rho)$-empty if
\begin{equation}
|\Oi \cap K_{\lfloor \iota\rad\rfloor}(x)| \leq \rho |K_{\lfloor \iota\rad \rfloor}(x)|.
\end{equation}
Let $\Ei(\iota,\rho)$ denote the intersection between $\ball{0}{2N}$ and the union of $(\iota\rad,\rho)$-empty boxes.
\end{definition}
We will choose $\rho>0$ small so that $(\iota\rad,\rho)$-empty boxes are rare. As a consequence, we have a rather good control on the volume of $\Ei(\iota,\rho)$.
\begin{lemma}
\label{EnvironmentCost}
For any $\iota,\rho\in(0,1)$ and $V>0$,
\begin{equation}
\P(|\Ei(\iota,\rho)| = V) \le \exp\left\{-V\left(\log {\tfrac{1}{p}}  + 2\rho \log \rho - \frac{\log (3N) }{\lfloor2\iota \rad\rfloor^d}\right)\right\}.
\label{eq:EnvironmentCost}
\end{equation}
\end{lemma}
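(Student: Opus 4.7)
\medskip
\noi\textbf{Proof proposal.}
The plan is to enumerate which collection of boxes are $(\iota\rad,\rho)$-empty and then use independence across disjoint boxes together with a single-box binomial estimate. Since the centres $x\in(2\lfloor\iota\rad\rfloor+1)\Z^d$ are spaced by strictly more than the side length $2\lfloor\iota\rad\rfloor$ of each box, the boxes $K_{\lfloor\iota\rad\rfloor}(x)$ are pairwise disjoint, so the events ``$K$ is $(\iota\rad,\rho)$-empty'' are mutually independent. Write $V_0:=(2\lfloor\iota\rad\rfloor)^d$ for the volume of a single box and $M$ for the number of boxes meeting $\ball{0}{2N}$; a crude count gives $M\le (3N)^d/V_0$, and after absorbing the dimensional prefactor into the constant ``$3N$'' this yields $\log M\le \log(3N)$ in the normalisation of the lemma. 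If $|\Ei(\iota,\rho)|=V$, then precisely $m:=V/V_0$ boxes are empty, so a union bound over subsets gives
\[
\P(|\Ei(\iota,\rho)|=V)\le \binom{M}{m}\,\P(K\text{ is }(\iota\rad,\rho)\text{-empty})^{m}.
\]

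Next I would estimate the single-box probability. The binomial bound
\[
\binom{V_0}{\lfloor\rho V_0\rfloor}\le \rho^{-\rho V_0}(1-\rho)^{-(1-\rho)V_0}\le \rho^{-\rho V_0}\,e^{\rho V_0},
\]
combined with the factorisation $(1-p)^{\rho V_0}p^{(1-\rho)V_0}=p^{V_0}\bigl((1-p)/p\bigr)^{\rho V_0}$, yields $\P(K\text{ empty})\le p^{V_0}\bigl(e(1-p)/(p\rho)\bigr)^{\rho V_0}$. For $\rho$ small enough depending on $p$ (e.g.\ $\rho\le p/(e(1-p))$) the bracketed quantity is at most $\rho^{-2}$, so $\P(K\text{ empty})\le p^{V_0}\rho^{-2\rho V_0}$. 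Applying $\binom{M}{m}\le M^m$ and $mV_0=V$, and taking logarithms, gives
\[
\log\P(|\Ei(\iota,\rho)|=V)\le -V\log\tfrac{1}{p} + 2V\rho\log\tfrac{1}{\rho} + V\tfrac{\log M}{V_0},
\]
which matches the right-hand side of \eqref{eq:EnvironmentCost} once $\log M\le \log(3N)$ is inserted.

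The one non-routine point is isolating the factor of $2$ in front of $\rho\log\rho$: it arises from pairing the combinatorial cost $\rho^{-\rho V_0}$ from $\binom{V_0}{\rho V_0}$ with the extra price $((1-p)/p)^{\rho V_0}$ paid when $p^{(1-\rho)V_0}$ is replaced by $p^{V_0}$, and then absorbing the $p$-dependent constant into a second copy of $\rho^{-1}$. That absorption requires $\rho$ small depending on $p$; for larger $\rho$ the stated bound remains valid but becomes nearly vacuous, so the two regimes are handled separately. The remaining ingredients (disjointness, enumeration, volume count) are standard.
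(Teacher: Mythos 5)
Your proof follows essentially the same route as the paper's, which simply cites \cite[Lemma~5.2]{DX18}: use disjointness/independence of the coarse-grained boxes, bound the probability that a single box is $(\iota\rad,\rho)$-empty via a binomial/Chernoff estimate, and pay a combinatorial factor for enumerating which boxes are empty. One small caveat, which the paper's intended argument also glosses over: the number $M$ of candidate centres in $\ball{0}{2N}\cap(2\lfloor\iota\rad\rfloor+1)\Z^d$ is of order $(N/\iota\rad)^d$, so $\log M$ is really $\le d\log(3N)$ rather than $\le\log(3N)$; ``absorbing the dimensional prefactor into $3N$'' does not literally produce $\log(3N)$ because the $d$ sits in front of the logarithm, not inside it. This is harmless in context, since the term is ultimately swallowed into the $c\eta$ error in \eqref{eq:vol_ErrorTerms}, but strictly speaking the constant $3N$ in \eqref{eq:EnvironmentCost} should be read as $c_dN$ for a dimension-dependent constant.
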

\begin{proof}
This can be proved in the same way as~\cite[Lemma 5.2]{DX18}.
\if0
Suppose $|\Ei(\iota,\rho)| = V$. Then we can write
\begin{equation}
 \Ei(\iota,\rho)=\bigcup_{k=1}^{V/|K_{\lfloor \iota\rad \rfloor}(0)|}K_{\lfloor \iota\rad \rfloor}(x_k),
\end{equation}
and the number of possibility of $x_k$'s is at most $(3N)^{V/(2\lfloor \iota\rad \rfloor)^d}$. On the other hand, for each $x\in\Z^d$, a simple application of the Chernoff bound yields
\begin{equation*}
\P\left(K_{\lfloor\iota\rad\rfloor}(x) \text{ is $(\iota\rad,\rho)$-empty}\right)
\le \exp\left\{-|K_{\lfloor\iota\rad\rfloor}(x)|\left(\log \tfrac{1}{1-p}+2\rho \log\rho\right)\right\}.
\end{equation*}	
Since $\Oi\cap K_{\lfloor \iota\rad \rfloor}(x_k)$ for different $k$ are independent, the desired bound follows.
\fi
\end{proof}

\begin{definition}
[Definition 5.3 in \cite{DX18}]
\label{def-eign-f}
Let $f$ be the eigenfunction corresponding to $\lambda_\fregion$ such that $\|f\|_1 = 1$. We extend $f$ to $\Z^d$ by letting $f(v) = 0 $ for $v \in \ball{0}{2N}^c \cup\Oi$ and define
\begin{equation*}
\Omega_\eta = \{ v \in \Z^d \colon f(v) \geq \eta |\Ei(\iota,\rho)|^{-1}\},
\end{equation*}
where we will fix the parameters
\begin{equation}
\label{eq:parameters}
\eta=2\delta_{N,x},\quad\rho=\eta^2,\quad\iota=\eta^{5/2}.
\end{equation}
\end{definition}
Our $\eta$ plays the role of $\epsilon$ in~\cite{DX18}. Noting that $\delta_{N,x}\in [\rad^{-1/5},\epsilon]$ by~\eqref{eq:def_delta} and the assumptions of Lemmas~\ref{lem:almost} and~\ref{lem:slow_cross}, we have
\begin{equation}
\left| 2\rho \log \rho - \frac{\log (3N) }{\lfloor2\iota \rad\rfloor^d}\right|
\le c\eta
\label{eq:vol_ErrorTerms}
\end{equation}
in~\eqref{eq:EnvironmentCost} for all sufficiently large $N$ when $\epsilon$ is small.

As mentioned before, we are going to prove that $\Omega_\eta$ largely coincides with $\Ei(\iota,\rho)$. We start with an \emph{a priori} bound on the eigenvalue $\lambda_{\ball{0}{2N} \setminus \Oi}$ under $\mu_{N,x}$ with $|x|\le \rad^d$, which is a consequence of Proposition~\ref{prop:LDPlower}.
\begin{corollary}
\label{cor:ev_crude}
There exists constant $c_{\ref{cor:ev_crude}}>0$ such that for all $|x| \leq \rad^d$,
\begin{equation}
 \mu_{N,x}(\lambda_{\ball{0}{2N} \setminus \Oi} \geq c_{\ref{cor:ev_crude}} \rad^{-2})
 \leq \exp\left\{- c_{\ref{cor:ev_crude}}^{-1}N^{\frac{d}{d+2}}\right\}.
\end{equation}
\end{corollary}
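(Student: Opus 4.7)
The plan is to combine the spectral gap estimate~\eqref{eq:SG} (which shows that a large Dirichlet eigenvalue forces the survival probability to be small) with the partition-function lower bound provided by Proposition~\ref{prop:LDPlower}.

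First, since by definition $\tau_x^N\ge N$, we have the inclusion $\{\tau_\Oi>\tau_x^N\}\subset\{\tau_\Oi>N\}$, and because of Remark~\ref{rem:macroball} we may identify $\tau_\Oi$ with the exit time from $\ball{0}{2N}\setminus\Oi$. Therefore, applying~\eqref{eq:SG} with $U=\ball{0}{2N}\setminus\Oi$ yields, for any $A>0$ and any $\Oi$ with $\lambda_{\ball{0}{2N}\setminus\Oi}\ge A\rad^{-2}$,
\begin{equation*}
\bP\left(\tau_\Oi>\tau_x^N\right)\le \bP\left(\tau_\Oi>N\right)\le |\ball{0}{2N}|^{1/2}\exp\left\{-NA\rad^{-2}\right\}\le (4N+1)^{d/2}\exp\left\{-A\varrho_1^{-2}N^{\frac{d}{d+2}}\right\}.
\end{equation*}
Taking the $\P$-expectation, we obtain the deterministic upper bound
\begin{equation*}
\P\otimes\bP\left(\tau_\Oi>\tau_x^N,\,\lambda_{\ball{0}{2N}\setminus\Oi}\ge A\rad^{-2}\right)\le (4N+1)^{d/2}\exp\left\{-A\varrho_1^{-2}N^{\frac{d}{d+2}}\right\}.
\end{equation*}

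Next, I lower-bound the partition function $\P\otimes\bP(\tau_\Oi>\tau_x^N)$. Since $|x|\le \rad^d$ gives $\delta_{N,x}\le 1$ and $\mathrm{dist}_\beta(x,\ball{0}{2\rad})\le c|x|\le c\varrho_1^d N^{d/(d+2)}$, the estimate~\eqref{eq:pf} from Proposition~\ref{prop:LDPlower} (applied after shrinking to $|x|\le\epsilon\rad^d$; the complementary range $\epsilon\rad^d\le|x|\le\rad^d$ is handled by first reaching a point $x'$ with $|x'|\le\epsilon\rad^d$ on the segment from $0$ to $x$ via~\eqref{eq:pf} and then crossing to $x$ at an additional cost $e^{-c|x-x'|}$ bounded by the Lyapunov term) produces
\begin{equation*}
\P\otimes\bP\left(\tau_\Oi>\tau_x^N\right)\ge \exp\left\{-C_0 N^{\frac{d}{d+2}}\right\}
\end{equation*}
for some $C_0=C_0(d,p)$ and all large $N$.

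Combining the two bounds and dividing yields
\begin{equation*}
\mu_{N,x}\left(\lambda_{\ball{0}{2N}\setminus\Oi}\ge A\rad^{-2}\right)\le (4N+1)^{d/2}\exp\left\{-\left(A\varrho_1^{-2}-C_0\right)N^{\frac{d}{d+2}}\right\}.
\end{equation*}
Choosing $A$ (and therefore $c_{\ref{cor:ev_crude}}$) so large that $A\varrho_1^{-2}-C_0\ge 2/c_{\ref{cor:ev_crude}}$ absorbs the polynomial prefactor and gives the claim.

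The step I expect to require the most care is the lower bound on the partition function, because Proposition~\ref{prop:LDPlower} is stated only for $|x|\le\epsilon\rad^d$ while the corollary covers the full range $|x|\le\rad^d$; the rest of the argument is a routine Chebyshev-type comparison between the two bounds.
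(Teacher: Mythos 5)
Your proposal is correct and follows exactly the route the paper takes: the spectral-gap estimate~\eqref{eq:SG} applied to $\ball{0}{2N}\setminus\Oi$ gives the upper bound $\P\otimes\bP(\lambda_{\ball{0}{2N}\setminus\Oi}\ge A\rad^{-2},\,\tau_\Oi>\tau_x^N)\le cN^{d/2}e^{-A\varrho_1^{-2}N^{d/(d+2)}}$, one then divides by the lower bound~\eqref{eq:pf} and picks $A$ large; the paper's one-line proof is literally this comparison. Your observation that Proposition~\ref{prop:LDPlower} is stated only for $|x|\le\epsilon\rad^d$ while the corollary is stated for $|x|\le\rad^d$ is a legitimate gap the paper does not address explicitly; your FKG-based patch (reach $x'$ with $|x'|\le\epsilon\rad^d$ and then cross to $x$ at a cost $e^{-c|x-x'|}\ge e^{-c\rad^d}=e^{-c'N^{d/(d+2)}}$) is sound in spirit, though if written out one should glue the two pieces with the strong Markov property and FKG as in the proof of Proposition~\ref{prop:LDPlower} itself, and one could equally just observe that the crude bound $\P\otimes\bP(\tau_\Oi>\tau_x^N)\ge\exp\{-C_0N^{d/(d+2)}\}$ follows from the proof of that proposition with any fixed constant in place of $\epsilon$, since only the leading exponential order matters here.
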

\begin{proof}
It follows from~\eqref{eq:SG} that
\begin{equation}
\begin{split}
&\P\otimes\bP\left(\lambda_{\ball{0}{2N} \setminus \Oi} \geq C \rad^{-2}, \tau_\Oi>\tau_x^N\right)\\
&\quad\le \E\left[c N^{d/2}(1-\lambda_{\ball{0}{2N} \setminus \Oi})^N\colon \lambda_{\ball{0}{2N} \setminus \Oi} \geq C \rad^{-2}\right]\\
&\quad\le cN^{d/2}\exp\left\{-C\varrho_1^{-2}N^{\frac{d}{d+2}}\right\}.
\end{split}
\end{equation}
Comparing this with~\eqref{eq:pf} and choosing $C>0$ sufficiently large, we obtain the desired result.
\end{proof}

In what follows, we often assume the condition
\begin{equation}
\label{eq:ev_cond}
\lambda_\fregion \leq c_{\text{\upshape{\ref{cor:ev_crude}}}} \rad^{-2}
\end{equation}
appearing in Corollary~\ref{cor:ev_crude}.
\begin{lemma}[Lemmas 5.6 and 5.7 in \cite{DX18}]
\label{omegaep}
Assume~\eqref{eq:ev_cond}. Then there exists $c_{\text{\upshape{\ref{omegaep}}}}>0$ such that 
\begin{align}
|\Omega_{{\eta^{2}}} \setminus\Ei(\iota,\rho)|
&\leq c_{\text{\upshape{\ref{omegaep}}}} {\eta}|\Ei(\iota,\rho)| \label{eq:omegaSizeeee},\\
\lambda_{\Omega_\eta}
&\leq \lambda_{\fregion}(1+c_{\text{\upshape{\ref{omegaep}}}}\eta)\label{eq:omegaEV}.
\end{align}
\end{lemma}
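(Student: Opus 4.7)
My plan is to carry out the adaptation of the arguments of Lemmas~5.6 and~5.7 of~\cite{DX18} to the present setting, the main new point being that the parameters in~\eqref{eq:parameters} are now tied to $\eta=2\delta_{N,x}$ rather than to a fixed small constant. I would work throughout on the high-probability event~\eqref{eq:ev_cond} provided by Corollary~\ref{cor:ev_crude}, so that $\lambda_\fregion\leq c\rad^{-2}$, and also use the a priori estimates on the principal eigenfunction $f$ that accompany the Donsker--Varadhan-type localization results in~\cite{DX18}, namely $\|f\|_\infty\leq c|\Ei|^{-1}$ and $\|f\|_2^2\gtrsim |\Ei|^{-1}$.

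For the eigenvalue bound~\eqref{eq:omegaEV} I would use the truncated test function $g:=(f-\eta|\Ei|^{-1})_+$, which is supported in $\Omega_\eta$ and hence admissible in the Rayleigh quotient defining $\lambda_{\Omega_\eta}$. Since truncation cannot increase the discrete Dirichlet energy, $\frac{1}{2d}\|\nabla g\|_2^2\leq \frac{1}{2d}\|\nabla f\|_2^2=\lambda_\fregion\|f\|_2^2$. The remaining step is to show $\|g\|_2^2\geq (1-c\eta)\|f\|_2^2$, for which I would write $f=g+r$ with $0\leq r\leq\eta|\Ei|^{-1}\mathbf{1}_{\{f>0\}}$ and bound $\|r\|_2^2\leq \eta|\Ei|^{-1}\|r\|_1\leq\eta|\Ei|^{-1}$, then combine with $\|f\|_2^2\gtrsim|\Ei|^{-1}$ to obtain $\|r\|_2^2\leq c\eta\|f\|_2^2$ and hence the desired Rayleigh quotient bound (up to a harmless rescaling of the exponent of $\eta$).

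For the containment~\eqref{eq:omegaSizeeee}, I would decompose $\Ei^c\cap\ball{0}{2N}$ into non-$(\iota\rad,\rho)$-empty boxes $K$ and prove a local $L^2$-estimate on each: since $|\Oi\cap K|\geq \rho|K|$, a Faber--Krahn-type estimate for the discrete Laplacian with Dirichlet conditions on $\Oi\cap K$, combined with a Lipschitz cutoff to a slightly enlarged box $\widetilde K$ handling the absence of Dirichlet boundary on $\partial K$, gives a local inequality of the form $\|f\|_{L^2(K)}^2\leq c(\lambda_\fregion/\mu_K)\|f\|_{L^2(\widetilde K)}^2$, where $\mu_K\gg\lambda_\fregion$ is a local eigenvalue scaling like $\rho^{2/d}(\iota\rad)^{-2}$. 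Summing over boxes (with bounded overlap coming from the doubling) and using $\|\nabla f\|_2^2\leq 2d\lambda_\fregion\|f\|_2^2$ yields $\|f\mathbf{1}_{\Ei^c}\|_2^2\leq c\eta^\alpha\|f\|_2^2$ for an explicit $\alpha>0$ determined by the parameter choices $\rho=\eta^2$ and $\iota=\eta^{5/2}$. Markov's inequality applied to the level-set definition of $\Omega_{\eta^2}$ then converts this $L^2$-bound into the claimed control $|\Omega_{\eta^2}\setminus\Ei|\leq c\eta|\Ei|$.

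The main obstacle is the local spectral estimate on non-$(\iota\rad,\rho)$-empty boxes, since $f$ satisfies Dirichlet conditions only on the interior obstacles rather than on $\partial K$; the cutoff to $\widetilde K$ introduces cross-terms that must be absorbed via a bounded-overlap summation, and one must verify that the sparse-obstacle local eigenvalue $\mu_K$ does have the desired dependence on $\rho$ and $\iota\rad$. A secondary concern is that every power of $\eta$ appearing in the DX18 argument has to combine into the precise $O(\eta)$-loss stated in~\eqref{eq:omegaEV} and the $O(\eta)$-ratio in~\eqref{eq:omegaSizeeee}; since $\eta=2\delta_{N,x}$ now decays only polynomially in $N$, any overcounting of error terms would be fatal, so I would carefully track each power of $\eta$ through the whole argument to confirm the scalings claimed in the lemma.
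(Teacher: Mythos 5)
Your treatment of the eigenvalue bound~\eqref{eq:omegaEV} matches the paper's argument: the same truncated test function $\bar f=(f-\eta|\Ei|^{-1})_+$ is inserted in the Rayleigh quotient, and the loss is controlled by showing $\|f-\bar f\|_2^2\le c\eta|\Ei|^{-1}\lesssim\eta\|f\|_2^2$. One caveat: the paper obtains the lower bound $\|f\|_2^2\gtrsim|\Ei|^{-1}$ from Cauchy--Schwarz on $\Omega_\eta$ \emph{after} establishing the volume control on $\Omega_\eta$, so the two bounds are not independent a priori facts; if you cite $\|f\|_2^2\gtrsim|\Ei|^{-1}$ as an a priori input you should make sure its derivation does not secretly presuppose~\eqref{eq:omegaSizeeee}, otherwise the argument is circular.

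Your route to the containment~\eqref{eq:omegaSizeeee} is genuinely different from the paper's and, as sketched, has a real gap. The paper works in $L^1$: by the eigenvalue equation and~\eqref{eq:ev_cond},
\begin{equation*}
\sum_{v\notin\Ei(\iota,\rho)}f(v)\,\bP_v\left(\tau_\Oi\le\lfloor\iota\rad\rfloor^2\right)
\le 1-(1-\lambda_\fregion)^{\lfloor\iota\rad\rfloor^2}\le c\,\iota^2,
\end{equation*}
and for any $v$ in a non-$(\iota\rad,\rho)$-empty box the free walk at time $\lfloor\iota\rad\rfloor^2$ is spread nearly uniformly over a box containing $\ge\rho(\iota\rad)^d$ obstacles, whence $\bP_v(\tau_\Oi\le\lfloor\iota\rad\rfloor^2)\ge c\rho$. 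Dividing gives $\sum_{v\notin\Ei}f(v)\le c\iota^2/\rho=c\eta^3$, and Markov at level $\eta^2|\Ei|^{-1}$ yields~\eqref{eq:omegaSizeeee}. Crucially, the hitting estimate $\bP_v(\tau_\Oi\le\lfloor\iota\rad\rfloor^2)\ge c\rho$ holds no matter how the obstacles are distributed inside the box, because it only uses the uniform lower bound on the heat kernel at a fixed time.

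Your local $L^2$ argument, by contrast, needs a lower bound $\mu_K\gg\lambda_\fregion$ on a \emph{local} spectral quantity that only sees the obstacles inside $K$ with free (Neumann-type) behaviour on $\partial K$. This is not implied by $|\Oi\cap K|\ge\rho|K|$: the obstacles could all be clustered in one corner of $K$, in which case $f$ can be essentially constant on the large remaining clearing inside $K$, the local Dirichlet energy there is negligible, and any estimate of the form $\|f\|_{L^2(K)}^2\le c\,(\lambda_\fregion/\mu_K)\|f\|_{L^2(\widetilde K)}^2$ with $\mu_K\gg(\iota\rad)^{-2}$ simply fails. The scaling $\mu_K\sim\rho^{2/d}(\iota\rad)^{-2}$ you propose is not what Faber--Krahn gives (which yields only $\mu_K\gtrsim(\iota\rad)^{-2}$), and at that scale the cutoff cross-terms $\|f\nabla\varphi\|_2^2\sim(\iota\rad)^{-2}\|f\|_{L^2(\widetilde K\setminus K)}^2$ are of the same order as the quantity you are trying to bound, so the cutoff-plus-sum does not close. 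You correctly flag the local spectral estimate as the main obstacle; it is in fact the place where the argument fails without importing something like the full method-of-enlargement-of-obstacles machinery. The $L^1$ argument in the paper (inherited from Lemmas 5.6--5.7 of~\cite{DX18}) circumvents the issue entirely by never requiring a local spectral gap, only a pointwise hitting probability, and I would recommend switching to it.
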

\begin{proof}
This can be proved in the same way as~\cite[Lemmas~5.6 and~5.7]{DX18}.
\if0
We first prove that $f$ has little mass outside $\Ei(\iota,\rho)$ and $\Omega_\eta$:
\begin{align}
\sum_{v \not\in  \Ei(\iota,\rho)}f(v) &\leq c\eta^3,\label{emptyeig}\\
\sum_{v \not \in \Omega_\eta}f(v) &\leq c\eta. \label{eq:omegaLocaltime}
\end{align}
To this end, note that by the eigenvalue equation and~\eqref{eq:ev_cond}, we have
\begin{equation}
\begin{split}
\sum_{v \not \in \Ei(\iota,\rho)}f(v)\bP_v(\tau_\Oi \leq \lfloor\iota \rad\rfloor^2)
& \leq \sum_{v \in \fregion}f(v)\bP_v(\tau_\Oi \leq \lfloor\iota \rad\rfloor^2)\\
&= 1 - (1-\lambda_{\fregion})^{\lfloor\iota \rad\rfloor^2}\\
&\leq c \iota^2.
\end{split}
\label{eq:emp-eig-1}
\end{equation}
For any $v\not\in \Ei(\iota,\rho)$, there are at least $c\rho(\iota \rad)^d$ obstacles in $B_{\iota \rad}(v)$ and hence
\begin{equation}
\label{eq:EcCost}
\begin{split}
 \bP_v(\tau \leq \lfloor \iota \rad \rfloor^2)
&\geq \bP_v\left(S_{\lfloor \iota \rad \rfloor^2} \in \Oi\text{ or }S_{\lfloor \iota \rad \rfloor^2-1} \in \Oi\right)\\
 &\geq  c\rho.
\end{split}
\end{equation}
Substituting this into~\eqref{eq:emp-eig-1} and recalling~\eqref{eq:parameters}, we obtain~\eqref{emptyeig}. From~\eqref{emptyeig} and the definition of $\Omega_\eta$, we have
\begin{equation}
\begin{split}
\sum_{v \not \in \Omega_\eta}f(v)
& \leq \sum_{v \not \in \Ei(\iota,\rho)}f(v) + \sum_{v \in \Ei(\iota,\rho) \setminus \Omega_\eta}f(v)\\
& \leq \sum_{v \not \in \Ei(\iota,\rho)}f(v) + |\Ei(\iota,\rho)|\sup_{v \not\in \Omega_\eta}f(v)\\
&\leq c\eta,
\end{split}
\end{equation}
and~\eqref{eq:omegaLocaltime} follows.

Combining~\eqref{emptyeig} with $f(v)\geq \eta/|\Ei(\iota,\rho)|$ for $v\in \Omega_\eta$, we obtain~\eqref{eq:omegaSizeeee} as follows:
\begin{equation}
 |\Omega_{\eta} \setminus\Ei(\iota,\rho)| \leq \sum_{v \not \in \Ei(\iota,\rho)}f(v) \left(\frac{\eta}{|\Ei(\iota,\rho)|}\right)^{-1} \leq  c\eta^2 |\Ei(\iota,\rho)|.
\end{equation}

Finally in order to prove~\eqref{eq:omegaEV}, we use $\bar f=(f-\eta/|\Ei(\iota,\rho)|)_+$ as a test function in the Rayleigh--Ritz variational formula
\begin{equation}
\lambda_{\Omega_\eta} = \inf \left\{\frac{1}{4d\|g\|_2^2}\sum_{x \sim y} ( g(x) -  g(y))^2\colon \text{supp}\,g\subset \Omega_\eta \right\},
\label{eq:RR}
\end{equation}
where the sum $\sum_{x\sim y}$ runs over $x,y\in\Z^d$ with $|x-y|=1$. By definition, $\bar f$ is supported on $\Omega_\eta$ and its Dirichlet energy is bounded by that of $f$:
\begin{equation}
\label{eq:fbargradient}
\sum_{x \sim y} (\bar f(x) - \bar f(y))^2 \leq \sum_{x \sim y} ( f(x) -  f(y))^2.
\end{equation}
Now once we show
\begin{equation}
 \|\bar f\|_2^2 \geq \|f\|_2^2 (1 - c \eta),
\label{eq:fbarL2}
\end{equation}
we can obtain~\eqref{eq:omegaEV} by substituting~\eqref{eq:fbargradient} and~\eqref{eq:fbarL2} into~\eqref{eq:RR}. Thus it remains to show~\eqref{eq:fbarL2}. By the Cauchy--Schwarz inequality,~\eqref{eq:omegaLocaltime} and~\eqref{eq:omegaSizeeee}, it follows that
\begin{equation}
\|f\|_2^2
\geq |\Omega_\eta|^{-1} \left(\sum_{x \in \Omega_\eta}f(x)\right)^2
\geq c|\Ei(\iota,\rho)|^{-1}.
\end{equation}
On the other hand, since $f(x) \leq \eta |\Ei(\iota,\rho)|^{-1}$ for $x \not \in \Omega_\eta$,
\begin{equation}
\begin{split}
 \|f-\bar f\|_2^2
&= \sum_{x \in \Omega_\eta}(\eta |\Ei(\iota,\rho)|^{-1})^2+\sum_{x \not \in \Omega_\eta}f^2(x)\\
&\le \eta |\Ei(\iota,\rho)|^{-1}\left(\eta{|\Ei(\iota,\rho)|^{-1}}{|\Omega_\eta|} + \|f\|_1\right)\\
&\le 2\eta |\Ei(\iota,\rho)|^{-1}.
\end{split}
\end{equation}
The two preceding displays yield~\eqref{eq:fbarL2} and we are done.
\fi
\end{proof}

Lemmas~\ref{EnvironmentCost} and~\ref{omegaep} provide controls on the volume and eigenvalue of $\Omega_\eta$ since $\Omega_\eta\subset\Omega_{\eta^{2}}$. The reason for considering $\Omega_{\eta^{2}}$ will be explained shortly. We will show that $\Omega_\eta$ is approximately a ball of radius $\rad$ by applying the quantitative Faber--Krahn inequality for the continuum Laplacian eigenvalue in~\cite{BDV15}. In order to apply it in our discrete setting, we need to approximate $\Omega_\eta\subset\mathbb Z^d$ by a continuous set in $\mathbb R^d$ with the volume and eigenvalue controlled. Recall our convention of using boldface letters to denote a subset of $\mathbb R^d$ as well as the smallest Dirichlet eigenvalue of the continuum Laplacian $-\frac{1}{2d}\Delta$.
For the eigenvalue approximation, we use a classical result about the comparison between discrete and continuum eigenvalues in~\cite{Kuttler70}. To this end, we need to introduce slightly enlarged sets
\begin{align}
\Omega_\eta^+ &= \left\{ v \in \Z^d: \min_{x \in \Omega_\eta}|x-v|_\infty < 2\right\},\\
\mathbf{\Omega}_\eta^+ &= \bigcup_{v\in\Omega_\eta^+}\left(v+[-\tfrac12,\tfrac12]^d\right).
\end{align}
Then~\cite[(38)]{Kuttler70} asserts
\begin{equation}
\boldsymbol{\lambda}_{\mathbf{\Omega}_\eta^+}
\leq \lambda_{{\Omega}_\eta} + c \lambda_{{\Omega}_\eta}^2.
\label{eq:con-ev}
\end{equation}

The passage from $\Omega_\eta$ to $\Omega_\eta^+$ is potentially problematic since it can increase the volume substantially when $\Omega_\eta$ has many tiny holes. The following lemma is to solve this problem by showing that $\Omega_\eta^+$ is not much larger than a slightly lower level set $\Omega_{\eta^2}$, for which the volume bound~\eqref{eq:omegaSizeeee} holds.
\begin{lemma}[Lemma 5.8 in \cite{DX18}]
\label{tOmegaSize}
Assume~\eqref{eq:ev_cond}. Then there exists $c_{\text{\upshape{\ref{tOmegaSize}}}}>0$ such that
\begin{equation}
|\Omega_{{\eta}}^+ \setminus  \Omega_{{\eta^{2}}}| \leq  c_{\text{\upshape{\ref{tOmegaSize}}}} \eta |{\Ei(\iota,\rho)}|.
\end{equation}
\end{lemma}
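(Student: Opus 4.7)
The plan is to obtain this via a Dirichlet energy estimate on the principal eigenfunction $f$. The key point is that any $v \in \Omega_\eta^+ \setminus \Omega_{\eta^2}$ forces $f$ to drop from $\ge \eta/|\mathcal{E}(\iota,\rho)|$ to $<\eta^2/|\mathcal{E}(\iota,\rho)|$ within graph distance at most $d$, so it contributes a definite amount to the total gradient energy of $f$.

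Concretely, for each such $v$ the definition of $\Omega_\eta^+$ produces a $w \in \Omega_\eta$ with $|w-v|_\infty \le 1$, and hence $|w-v|_1 \le d$. Fixing a shortest lattice path $\pi(v)$ from $v$ to $w$ and applying Cauchy--Schwarz yields
\[
\sum_{\{x,y\} \in \pi(v)}(f(x)-f(y))^2 \;\ge\; \frac{(f(w)-f(v))^2}{d} \;\ge\; \frac{c\eta^2}{|\mathcal{E}(\iota,\rho)|^2},
\]
since $f(w)-f(v) \ge (\eta-\eta^2)/|\mathcal{E}(\iota,\rho)| \ge \eta/(2|\mathcal{E}(\iota,\rho)|)$ for $\eta$ small. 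Summing over $v \in \Omega_\eta^+ \setminus \Omega_{\eta^2}$ and observing that any fixed edge of $\Z^d$ lies on at most $C(d)$ of the paths $\pi(v)$ (each contributing $v$ must be within graph distance $d$ of the edge), we arrive at
\[
|\Omega_\eta^+ \setminus \Omega_{\eta^2}|\cdot \frac{c\eta^2}{|\mathcal{E}(\iota,\rho)|^2}\;\le\; C(d)\sum_{x \sim y}(f(x)-f(y))^2 \;=\; C'(d)\,\lambda_\fregion\,\|f\|_2^2,
\]
the last identity being the standard Rayleigh quotient for the Dirichlet eigenfunction on $\fregion \setminus \Oi$.

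To close, I need to control $\lambda_\fregion\|f\|_2^2$. The hypothesis~\eqref{eq:ev_cond} gives $\lambda_\fregion \le c\rad^{-2}$. For the $L^2$ norm I use $\|f\|_2^2 \le \|f\|_\infty \|f\|_1 = \|f\|_\infty$, and bound $\|f\|_\infty$ by iterating the eigenvalue equation: writing $f(v) = (1-\lambda_\fregion)^{-n}\sum_w p_n^{\fregion\setminus\Oi}(v,w)f(w)$ at $n=\lfloor \rad^2\rfloor$, using $(1-\lambda_\fregion)^{-n}\le e^{c_{\ref{cor:ev_crude}}}$ together with the local CLT estimate $\|p_n(v,\cdot)\|_\infty \le Cn^{-d/2}$ gives $\|f\|_\infty \le C\rad^{-d}$. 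Plugging these bounds back in,
\[
|\Omega_\eta^+ \setminus \Omega_{\eta^2}| \;\le\; \frac{C\,|\mathcal{E}(\iota,\rho)|^2}{\eta^{2}\,\rad^{d+2}},
\]
which is at most $c\eta|\mathcal{E}(\iota,\rho)|$ as soon as $|\mathcal{E}(\iota,\rho)| \le c\eta^3\rad^{d+2}$. Since $\eta = 2\delta_{N,x} \ge 2\rad^{-1/5}$ by~\eqref{eq:parameters}, the factor $\eta^3\rad^2$ diverges with $N$, so the inequality reduces to the a priori estimate $|\mathcal{E}(\iota,\rho)| \lesssim \rad^d$.

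The main technical hurdle is securing this last bound $|\mathcal{E}(\iota,\rho)| \le C\rad^d$, which does not follow from~\eqref{eq:ev_cond} alone. I would install it by the volume-entropy estimate~\eqref{eq:EnvironmentCost}: configurations with $|\mathcal{E}(\iota,\rho)| \gg \rad^d$ pay an exponential volume cost that, when combined with the survival-and-endpoint lower bound~\eqref{eq:pf}, makes their contribution to $\mu_{N,x}$ negligible, so the lemma only needs to be applied on the complementary event. This is exactly the device used in the analogous Lemma~5.8 of~\cite{DX18}, and modulo this bookkeeping the Dirichlet-energy argument above is the substantive content of the proof.
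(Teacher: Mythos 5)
Your Dirichlet-energy route is genuinely different from the paper's argument, and the gradient/overlap part of it is sound, but there is a real gap where you yourself flag it, and it is a bigger problem than you suggest.

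Your bound ends up as $|\Omega_\eta^+\setminus\Omega_{\eta^2}|\le C|\Ei(\iota,\rho)|^2\eta^{-2}\rad^{-d-2}$, which only implies the claimed $c\eta|\Ei(\iota,\rho)|$ after inserting $|\Ei(\iota,\rho)|\lesssim\rad^d$. That extra input does \emph{not} follow from~\eqref{eq:ev_cond}: the hypothesis controls $\lambda_{\fregion}$, but $\Ei(\iota,\rho)$ is a function of $\Oi$ alone, and a configuration with a single clean ball of radius $\rad$ (so $\lambda_\fregion\lesssim\rad^{-2}$) can perfectly well carry many additional low-density boxes far away, inflating $|\Ei(\iota,\rho)|$ to order $N^d\gg\rad^d$. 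So the lemma, as a deterministic statement conditional on~\eqref{eq:ev_cond}, is not what your argument proves; you would be proving a weaker statement that additionally assumes $|\Ei(\iota,\rho)|\lesssim\rad^d$, and then one has to go back and check that the surrounding proofs only ever invoke the lemma in that regime. That is a nontrivial restructuring, not bookkeeping, and your claim that this is ``exactly the device used in'' Lemma~5.8 of \cite{DX18} is mistaken — neither that proof nor the one in this paper needs it.

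The paper avoids the issue by working entirely in $\ell^1$, where the normalization $\|f\|_1=1$ is directly exploitable. Taking $t_1=2d$, $t_2=2d+1$, one compares $\sum_{v\in\Omega_{\eta^2}}(P|_{\fregion})^{t_i}f(v)=(1-\lambda_\fregion)^{t_i}\sum_{v\in\Omega_{\eta^2}}f(v)$ with the corresponding free-walk quantity $\sum_{v\in\Omega_{\eta^2}}(P^{t_i}f)(v)=1-\sum_{v\notin\Omega_{\eta^2}}\sum_u p_{t_i}(u,v)f(u)$, and deduces $\sum_{v\notin\Omega_{\eta^2}}\sum_u(p_{t_1}(u,v)+p_{t_2}(u,v))f(u)\le c(\rad^{-2}+\sum_{v\notin\Omega_{\eta^2}}f(v))\le c\eta^2$. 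Since every $v\in\Omega_\eta^+\setminus\Omega_{\eta^2}$ has a neighbour $x\in\Omega_\eta$ (so $f(x)\ge\eta/|\Ei(\iota,\rho)|$) with $p_{t_1}(x,v)+p_{t_2}(x,v)\ge(2d)^{-(2d+1)}$, each such $v$ contributes at least $c\eta/|\Ei(\iota,\rho)|$ to the left-hand side, and the bound $|\Omega_\eta^+\setminus\Omega_{\eta^2}|\le c\eta|\Ei(\iota,\rho)|$ drops out without ever invoking $\|f\|_\infty$, $\|f\|_2$, or any a priori control on $|\Ei(\iota,\rho)|$. If you want to salvage your approach without the extra hypothesis, you would need an $L^1$ version of your gradient estimate rather than the $L^2$ one; as written, the $\|f\|_2^2$ appearance is exactly what forces the unwanted volume assumption.
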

\begin{proof}
This can be proved in the same way as \cite[Lemma~5.8]{DX18}.
\if0
Let $t_1 = 2d$ and $t_2 = 2d + 1$. By assumption, for $i = 1,2$
\begin{equation}
\label{eq:sizeomega-1}
\begin{split}
\sum_{v \in \Omega_{{\eta^2}}}((P|_{\fregion})^{t_i}f)(v)
&= (1-\lambda_{\fregion})^{t_i} \sum_{v \in \Omega_{{\eta^2}}}f(v)\\
&\geq 1 - c\rad^{-2}  - \sum_{v \not \in \Omega_{{\eta^2}}} f(v).
\end{split}
\end{equation}
At the same time (recalling that $P$ is the transition kernel for the simple random walk with no killing), we have
\begin{align}
\label{eq:sizeomega-2}
\sum_{v \in \Omega_{{\eta^2}}}(P^{t_i}f)(v) &= \sum_{u \in \Z^d} (1 - \bP_u(S_{t_i} \not \in \Omega_{{\eta^2}}))f(u) \nonumber\\
&=1 - \sum_{v \not \in \Omega_{{\eta^2}}} \sum_{u \in \Z^d} p_{t_i}(u,v)f(u),
\end{align}
where $p_t(\cdot,\cdot)$ is the $t$-step transition probability for simple random walk on $\Z^d$.
Combining \eqref{eq:sizeomega-1} and \eqref{eq:sizeomega-2} (noting $\sum_{v \in \Omega_{{\eta^2}}}((P|_{\fregion})^{t_i}f)(v) \leq \sum_{v \in \Omega_{{\eta^2}}}(P^{t_i}f)(v)$), we get that
\begin{equation}
\label{eq:sizeomega-1+2}
	\sum_{v \not \in \Omega_{{\eta^2}}} \Big( \sum_{u \in \Z^d} (p_{t_1}(u,v) + p_{t_2}(u,v) )f(u) \Big) \leq 2(c\rad^{-2} +  \sum_{v \not \in \Omega_{{\eta^2}}} f(v)) \leq c \eta^2,
\end{equation}
where the second transition follows from \eqref{eq:omegaLocaltime}.
Now, if $ v \not \in \Omega_{{\eta^2}}$ and $|v - x|_\infty \leq 2$ for some $x \in \Omega_{{\eta}}$, then
\begin{equation}
p_{t_1}(x,v) + p_{t_2}(x,v) \geq (2d)^{-2d} \text{ and } f(x) \geq {\eta} |\Ei(\iota,\rho)|^{-1}.
\end{equation}
Substituting these bounds into~\eqref{eq:sizeomega-1+2} yields the desired result.
\fi
\end{proof}

Now we are ready to prove Lemmas~\ref{lem:almost} and~\ref{lem:slow_cross}.
\begin{proof}[Proof of Lemma~\ref{lem:almost}]
In view of Corollary~\ref{cor:ev_crude}, we may assume~\eqref{eq:ev_cond}. The proof is divided into three steps. The last two steps are similar to the proof of~\cite[Lemma~5.9]{DX18} and hence we omit some technical details.
\medskip

\noindent\textbf{Step 1:}
We first prove that $|\boldsymbol{\Omega}_{\eta}^+|$ is not much larger than $|\boldsymbol{B}({0};{\rad})|$, the volume of the Euclidean ball $\boldsymbol{B}({0};{\rad})$, under $\mu_{N,x}$ with high probability.
To this end, we use~\eqref{eq:SG} to obtain
\begin{equation}
\begin{split}
&\P \otimes\bP\left(\tau_\Oi > \tau_x^N, |\Ei(\iota,\rho)| = V\right)\\
&\quad \leq \E\left[C N^{d/2} \exp\left\{-N\lambda_{\fregion}\right\} \colon |\Ei(\iota,\rho)| = V\right].
\end{split}
\label{eq:Vfixed1}
\end{equation}
By~\eqref{eq:ev_cond},~\eqref{eq:omegaEV},~\eqref{eq:con-ev} and the classical Faber--Krahn inequality, when $N$ is sufficiently large, we have
\begin{equation}
\begin{split}
 \lambda_{\fregion}
 &\ge \boldsymbol{\lambda}_{\boldsymbol{\Omega}_\eta^+}(1 - c\eta)\\
 &\ge |\boldsymbol{\Omega}_\eta^+|^{-2/d}\boldsymbol{\lambda}_{\boldsymbol{B}}(1 - c\eta),
\end{split}
\label{eq:FK}
\end{equation}
where $\boldsymbol{B}$ is the ball with unit volume in $\R^d$ centered at the origin. Moreover, by Lemmas~\ref{omegaep} and~\ref{tOmegaSize} and the fact that $|\Omega_\eta^+|=|\boldsymbol{\Omega}_\eta^+|$, it follows that on $\{|\Ei(\iota,\rho)| = V\}$,
\begin{equation}
\begin{split}
|\boldsymbol{\Omega}_\eta^+|
&\le |\Ei(\iota,\rho)|+|{\Omega}_{\eta^2}\setminus \Ei(\iota,\rho)|+|{\Omega}_\eta^+\setminus {\Omega}_{\eta^2}|\\
&\le |V|(1+c\eta).
\end{split}
\label{eq:bOmegaVol}
\end{equation}
Substituting~\eqref{eq:FK} and~\eqref{eq:bOmegaVol} into~\eqref{eq:Vfixed1} and using  Lemma~\ref{EnvironmentCost} and~\eqref{eq:vol_ErrorTerms}, we find that
\begin{equation}
\begin{split}
&\P \otimes\bP\left(\tau_\Oi > \tau_x^N, |\Ei(\iota,\rho)| = V\right)\\
&\quad \le C N^{d/2} \exp\left\{-NV^{-2/d}\boldsymbol{\lambda}_{\boldsymbol{B}}(1 - c\eta)\right\} \P\left( |\Ei(\iota,\rho)| = V\right)\\
&\quad \le C N^{d/2} \exp\left\{-NV^{-2/d}\boldsymbol{\lambda}_{\boldsymbol{B}}(1 - c\eta) + V(\log p  + \eta)\right\}\\
&\quad\le \exp\left\{-\left(c(d,p)-C\eta+c\left(\frac{V}{|\boldsymbol{B}({0};{\rad})|}-1\right)^2\right)N^{\frac{d}{d+2}} \right\},
\end{split}
\label{eq:Vfixed2}
\end{equation}
where we used a second order Taylor expansion for the function $V\mapsto NV^{-2/d}\boldsymbol{\lambda}_{\boldsymbol{B}(0;r)}+V\log {\frac{1}{p}}$ at $V=|\boldsymbol{B}({0};{\rad})|$, where it takes its minimal value $c(d,p)N^{\frac{d}{d+2}}$  (see the discussion following~\eqref{eq:DV79}).

Now if we suppose $|\boldsymbol{\Omega}^+_\eta|\ge |\boldsymbol{B}({0};{\rad})|+\eta^{1/3}\rad^d$ and $|\Ei(\iota,\rho)| = V$, then by~\eqref{eq:bOmegaVol} we have $V\ge |\boldsymbol{B}({0};{\rad})|+\tfrac12\eta^{1/3}\rad^d$ and hence
\begin{equation}
\left(\frac{V}{|\boldsymbol{B}({0};{\rad})|}-1\right)^2\ge c\eta^{2/3}.
\label{eq:non_optimal}
\end{equation}
Since the number of possible values of $V$ is bounded by $(4N+1)^d$ and $\eta=2\delta_{N,x}$, comparing~\eqref{eq:Vfixed2} with~\eqref{eq:pf} shows that
\begin{equation}
\begin{split}
& \mu_{N,x}\left(|\boldsymbol{\Omega}_{\eta}^+|\ge |\boldsymbol{B}({0};{\rad})|+ \eta^{1/3}\rad^d\right)\\
&\quad \le \sum_{V:\text{\eqref{eq:non_optimal}}}
\frac{\P\otimes\bP(\tau_\Oi> \tau_x^N, |\Ei(\iota,\rho)| = V)}{\P\otimes\bP(\tau_\Oi>\tau_x^N)}\\
&\quad \le (4N+1)^d \exp\left\{-c(2\delta_{N,x})^{2/3} N^{\tfrac{d}{d+2}}+{\rm dist}_\beta\left(x, \ball{0}{2\rad}\right)+\epsilon \delta_{N,x}N^{\frac{d}{d+2}}\right\}\\
&\quad \le  \exp\left\{-\delta_{N,x} N^{\tfrac{d}{d+2}}\right\}
\end{split}
\label{eq:VOLconst1}
\end{equation}
for all sufficiently large $N$ when $\epsilon>0$ is small. This gives the bound we need on $|\boldsymbol{\Omega}_\eta^+|$.
\medskip

\noindent\textbf{Step 2:}
Next, we prove that with high probability under $\mu_{N,x}$, $\boldsymbol{\lambda}_{\boldsymbol{\Omega}_\eta^+}$ is not much larger than ${\boldsymbol{\lambda}}_{\boldsymbol{B}({0};{\rad})}$. As a consequence, we will also see that $|\boldsymbol{\Omega}_\eta^+|$ is not much smaller than $|\boldsymbol{B}({0};{\rad})|$. If we replace the condition $|\Ei(\iota,\rho)|=V$ in~\eqref{eq:Vfixed1} by $\boldsymbol{\lambda}_{\boldsymbol{\Omega}_\eta^+}\ge {\boldsymbol{\lambda}}_{\boldsymbol{B}({0};{\rad})}(1 + \eta^{1/2})$, then by using the first line of~\eqref{eq:FK}, we get
\begin{equation}
\P\otimes\bP\left(\tau_{\Oi}>\tau_x^N,\boldsymbol{\lambda}_{\boldsymbol{\Omega}_\eta^+}\ge {\boldsymbol{\lambda}}_{\boldsymbol{B}({0};{\rad})}(1 + \eta^{1/2})\right)
\le \exp\left\{-\left(c(d,p)+\tfrac{1}{2}\eta^{1/2}\right)N^{\frac{d}{d+2}} \right\}.
\end{equation}
Recalling $\eta=2\delta_{N,x}$ and comparing the above with~\eqref{eq:pf} again, we conclude that
\begin{equation}
\mu_{N,x}\left(\boldsymbol{\lambda}_{\boldsymbol{\Omega}_\eta^+}\ge {\boldsymbol{\lambda}}_{\boldsymbol{B}({0};{\rad})}(1 + \eta^{1/2})\right)
\le \exp\left\{-\delta_{N,x}^{1/2}N^{\frac{d}{d+2}} \right\}
\label{eq:EVconst1}
\end{equation}
for all sufficiently large $N$ when $\epsilon>0$ is small. On the complementary event $\{\boldsymbol{\lambda}_{\boldsymbol{\Omega}_\eta^+}< {\boldsymbol{\lambda}}_{\boldsymbol{B}({0};{\rad})}(1 + \eta^{1/2})\}$, the classical Faber--Krahn inequality implies
\begin{equation}
 |\mathbf{\Omega}_{\eta}^+|\ge |\boldsymbol{B}({0};{\rad})|(1-c\eta^{1/2}),
\label{eq:VOLconst2}
\end{equation}
which complements the upper bound in~\eqref{eq:VOLconst1}.
\medskip

\noindent\textbf{Step 3:}
We prove that $\mathbf{\Omega}_{\eta}^+$ is well approximated by a ball of radius almost $\rad$ by using the quantitative Faber--Krahn inequality in~\cite{BDV15}. Let us recall that ~\cite[MAIN THEOREM]{BDV15} asserts
\begin{equation}
\inf\left\{\frac{|\boldsymbol{B'}\triangle\boldsymbol{\Omega}|}{|\boldsymbol{B'}|}
\colon
\boldsymbol{B'}\text{ is any ball with }|\boldsymbol{B'}|=|\boldsymbol{\Omega}|\right\}^{2}
\le c \left(|\boldsymbol{\Omega}|^{2/d}\boldsymbol{\lambda}_{\boldsymbol{\Omega}}
-\boldsymbol{\lambda}_{\boldsymbol{B}}\right),
\label{eq:BDV15}
\end{equation}
where $\boldsymbol{B}$ is the ball with unit volume in $\R^d$ centered at the origin (see~\eqref{eq:FK}). By~\eqref{eq:VOLconst1},~\eqref{eq:VOLconst2} and~\eqref{eq:EVconst1} in the previous steps, we may assume
\begin{align}
&\left||\boldsymbol{\Omega}_{\eta}^+|-|\boldsymbol{B}({0};{\rad})|\right|
\le \eta^{1/3}\rad^d,\label{eq:VOLconst}\\
&\boldsymbol{\lambda}_{\boldsymbol{\Omega}_\eta^+} \le {\boldsymbol{\lambda}}_{\boldsymbol{B}({0};{\rad})}(1 + \eta^{1/2}).\label{eq:EVconst}
\end{align}
In particular, it follows that
\begin{equation}
\begin{split}
|\boldsymbol{\Omega}_\eta^+|^{2/d} \boldsymbol{\lambda}_{\boldsymbol{\Omega}_\eta^+} - {\boldsymbol{\lambda}}_{\boldsymbol{B}}
&=|\boldsymbol{\Omega}_\eta^+|^{2/d} \boldsymbol{\lambda}_{\boldsymbol{\Omega}_\eta^+} - |\boldsymbol{B}({0};{\rad})|^{2/d}{\boldsymbol{\lambda}}_{\boldsymbol{B}({0};{\rad})}\\
&\le c\eta^{1/3}.
\end{split}
\end{equation}
Substituting this into~\eqref{eq:BDV15}, we can find a ball $\boldsymbol{B}_\eta$ such that $|\boldsymbol{B}_\eta|=|\mathbf{\Omega}_\eta^+|$ and $|\mathbf{\Omega}_\eta^+ \triangle \boldsymbol{B}_\eta| \le c \eta^{1/6}\rad^d$. Setting $\cent$ as the center of $\boldsymbol{B}_\eta$ and using~\eqref{eq:VOLconst} again, we find
\begin{equation}
|\boldsymbol{B}(\cent;\rad) \triangle \boldsymbol{B}_\eta|
\le c \eta^{1/6}\rad^d.
\label{eq:2balls}
\end{equation}
\smallskip

\noindent\textbf{Step 4:}
Finally, we prove that $B_\eta=\boldsymbol{B}_\eta\cap\Z^d$ is almost free of obstacles. To this end, we first show that
\begin{equation}
|B_\eta\setminus \Ei(\iota,\rho)|
\le |B_\eta\setminus \Omega_{\eta}^+|
+|\Omega_{\eta}^+\setminus \Ei(\iota,\rho)|
\label{eq:OmegaMid}
\end{equation}
is small.
Recall that~\eqref{eq:VOLconst1} shows $|\Ei(\iota,\rho)| \le c\rad^d$ with $\mu_{N,x}$ probability greater than $1-\exp\{-\delta_{N,x}N^{\frac{d}{d+2}}\}$. Under this condition, the second term on the right-hand side is smaller than $c\eta\rad^d$ due to Lemmas~\ref{omegaep} and~\ref{tOmegaSize}. For the first term, note first that
\begin{equation}
 |B_\eta\setminus \Omega_{\eta}^+| = |B_\eta|-|\Omega_{\eta}^+|+|\Omega_{\eta}^+\setminus B_\eta|.
\label{eq:interchange}
\end{equation}
Since we know $|\boldsymbol{B}_\eta|=|\mathbf{\Omega}_\eta^+|$, $|\Omega_\eta^+|=|\mathbf{\Omega}_\eta^+|$ and $|B_\eta|\le |\boldsymbol{B}_\eta|+c\rad^{d-1}$, we only need to prove that $|\Omega_{\eta}^+\setminus B_\eta|$ is small. Since $\boldsymbol\Omega_{\eta}^+$ is a union of cubes $\{y+{[-\frac12,\frac12]^d}\colon y\in\Omega_\eta^+\}$ and $|(y+{[-\frac12,\frac12]^d})\setminus \boldsymbol{B}_\eta|\ge \tfrac12$ for any $y\in \Omega_{\eta}^+\setminus B_\eta$, it follows that
\begin{equation}
\begin{split}
|\Omega_{\eta}^+\setminus B_\eta|
&= \sum_{y\in \Omega_{\eta}^+ \setminus B_\eta} |y+[-\tfrac12,\tfrac12]^d|\\
&\le 2\sum_{y\in \Omega_{\eta}^+} |(y+[-\tfrac12,\tfrac12]^d)\setminus \boldsymbol{B}_\eta|\\
&\le 2|\boldsymbol{\Omega}_\eta^+\setminus \boldsymbol{B}_\eta|.
\end{split}
\end{equation}
This last line is shown to be bounded by $c \eta^{1/6}\rad^d$ in Step~3. Therefore we conclude that~\eqref{eq:OmegaMid} is bounded by $c \eta^{1/6}\rad^d$. Recalling~\eqref{eq:2balls} and Definition~\ref{empty-set}, one can easily check that this implies~\eqref{eq:almost}.
\end{proof}
In fact,~\eqref{eq:VOLconst1} in Step 1 shows that $|\Ei(\iota,\rho)|$ is close to $|\ball{0}{\rad}|$ and hence by arguing as in~\eqref{eq:interchange}, we find the following:
\begin{corollary}
\label{cor:symmdiff}
Under the same assumption as in Lemma~\ref{lem:almost}, there exists $c_{\text{\upshape{\ref{cor:symmdiff}}}}>0$ such that for $\iota,\rho$ as in~\eqref{eq:parameters},
\begin{equation}
\mu_{N,x}\left(|\Ei(\iota,\rho)\triangle \ball{\cent}{\rad}|
 \ge \delta_{N,x}^{c_{\text{\upshape{\ref{cor:symmdiff}}}}}N^{\frac{d}{d+2}}\right)
\le \exp\left\{-{c_{\text{\upshape{\ref{cor:symmdiff}}}}} \delta_{N,x} N^{\frac{d}{d+2}}\right\}
\label{eq:symmdiff}
\end{equation}
for all sufficiently large $N$.
\end{corollary}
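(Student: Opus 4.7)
The plan is to assemble the corollary directly from ingredients already established in the proof of Lemma~\ref{lem:almost}, without doing any genuinely new work. The key observation is that Step~4 of that proof bounded $|B_\eta\setminus \Ei(\iota,\rho)|$ by $c\eta^{1/3}\rad^d$ on an event of $\mu_{N,x}$-probability at least $1-\exp\{-\delta_{N,x}N^{d/(d+2)}\}$, and Step~3 produced a Euclidean ball $\boldsymbol{B}_\eta$ centered at $\cent$ with $|\boldsymbol{B}(\cent;\rad)\triangle \boldsymbol{B}_\eta|\le c\eta^{1/3}\rad^d$. What remains is to upgrade the one-sided containment bound $|B_\eta\setminus \Ei(\iota,\rho)|$ into a symmetric-difference bound between $\Ei(\iota,\rho)$ and $\ball{\cent}{\rad}$.

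First I would get control on the volume of $\Ei(\iota,\rho)$: combining~\eqref{eq:VOLconst1} (upper bound on $|\boldsymbol{\Omega}_\eta^+|$, hence on $V = |\Ei(\iota,\rho)|$ via~\eqref{eq:bOmegaVol}) with~\eqref{eq:VOLconst2} (lower bound on $|\boldsymbol{\Omega}_\eta^+|\le |\Ei(\iota,\rho)|(1+c\eta)$ from Lemmas~\ref{omegaep} and~\ref{tOmegaSize}), one obtains on the relevant high-probability event
\begin{equation}
\left||\Ei(\iota,\rho)|-|\ball{0}{\rad}|\right|\le c\,\eta^{1/3}\rad^d.
\end{equation}
Since $|B_\eta|=|\boldsymbol{B}_\eta|+O(\rad^{d-1})=|\boldsymbol{B}(\cent;\rad)|+O(\eta^{1/3}\rad^d)=|\ball{\cent}{\rad}|+O(\eta^{1/3}\rad^d)$, the same estimate holds with $B_\eta$ in place of $\ball{\cent}{\rad}$.

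Next I would run the easy counting step, mirroring~\eqref{eq:interchange}: on that same event,
\begin{equation}
|\Ei(\iota,\rho)\setminus B_\eta|\le |\Ei(\iota,\rho)|-|B_\eta|+|B_\eta\setminus \Ei(\iota,\rho)|\le c\,\eta^{1/3}\rad^d,
\end{equation}
so together with Step~4 of the proof of Lemma~\ref{lem:almost} we get $|\Ei(\iota,\rho)\triangle B_\eta|\le c\eta^{1/3}\rad^d$. A second passage from $B_\eta$ to $\ball{\cent}{\rad}$ via~\eqref{eq:2balls} (and absorbing the $O(\rad^{d-1})$ boundary correction between $\boldsymbol{B}(\cent;\rad)$ and its discrete trace $\ball{\cent}{\rad}$) then yields
\begin{equation}
|\Ei(\iota,\rho)\triangle \ball{\cent}{\rad}|\le c\,\eta^{1/3}\rad^d\le c\,\delta_{N,x}^{1/3}N^{d/(d+2)}.
\end{equation}

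Since every high-probability event invoked above has $\mu_{N,x}$-probability at least $1-\exp\{-\delta_{N,x}N^{d/(d+2)}\}$, choosing $c_{\ref{cor:symmdiff}}$ small enough (say equal to $\min\{1/3, 1\}$ up to constants) gives~\eqref{eq:symmdiff}. There is no genuine obstacle here; the only thing to be careful about is bookkeeping the two $O(\eta^{1/3}\rad^d)$ errors, the $O(\rad^{d-1})$ discretization error between $\boldsymbol{B}(\cent;\rad)$ and $\ball{\cent}{\rad}$, and verifying that all of them are dominated by $\delta_{N,x}^{c_{\ref{cor:symmdiff}}}N^{d/(d+2)}$ for a uniform $c_{\ref{cor:symmdiff}}>0$, which is immediate since $\eta=2\delta_{N,x}$ and $\rad^d\asymp N^{d/(d+2)}$.
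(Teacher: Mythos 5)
Your proposal is correct and follows exactly the route the paper's own (one-sentence) proof indicates: use the volume control on $|\Ei(\iota,\rho)|$ established in Step~1 of the proof of Lemma~\ref{lem:almost} (i.e.~\eqref{eq:VOLconst1}), combine it with the one-sided bound $|B_\eta\setminus\Ei(\iota,\rho)|\le c\eta^{1/3}\rad^d$ from Step~4 via the counting identity in the spirit of~\eqref{eq:interchange}, and then convert from $B_\eta$ to $\ball{\cent}{\rad}$ using~\eqref{eq:2balls}. The only cosmetic overkill is that you also invoke~\eqref{eq:VOLconst2}: as you correctly note elsewhere, \eqref{eq:VOLconst1} already gives a two-sided estimate on $|\Ei(\iota,\rho)|$, so~\eqref{eq:VOLconst2} is not strictly needed here (though it does no harm). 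Your careful bookkeeping of the $O(\rad^{d-1})$ boundary/discretization error, and the observation that it is dominated by $\eta^{1/3}\rad^d$ because $\eta\ge 2\rad^{-1/5}$, is exactly right.
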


\begin{proof}
[Proof of Lemma~\ref{lem:slow_cross}]
We only sketch the argument since the proof is almost identical to~\cite[Lemma~6.1]{DX18}. Thanks to Corollary~\ref{cor:symmdiff}, it suffices to prove that~\eqref{eq:slow_cross} holds on the event
\begin{equation}
\label{eq:typical_env}
\left\{|\Ei(\iota,\rho)\triangle \ball{\cent}{\rad}|
 <\delta_{N,x}^{c_{\text{\upshape{\ref{cor:symmdiff}}}}}N^{\frac{d}{d+2}}
\right\}.
\end{equation}
On this event, we have
\begin{equation}
\left|\Ei(\iota,\rho)\setminus B^-\right|
\le \delta_{N,x}^C \rad^d
\label{eq:outside_thin}
\end{equation}
for some $C \in (0,1)$, where $B^-$ is defined in~\eqref{eq:def_B-}.

Let us first assume that $S_{[0,t]}$ visits $(\Ei(\iota,\rho)\setminus B^-)^c$ more than $t/4$ times, which is natural since $|\Ei(\iota,\rho)\setminus B^-|$ has a small volume by~\eqref{eq:outside_thin}. Then we can extract at least $\tfrac14 \delta_{N,x}^{-5}\rad^{-2}t$ different times $k_i$'s in $[0,t]$ which satisfy
\begin{itemize}
 \item $S_{k_i}\not\in \Ei(\iota,\rho)\setminus B^-$ and
 \item $k_{i+1}-k_i \ge \delta_{N,x}^5\rad^2$
\end{itemize}
for all $i\le \tfrac14 \delta_{N,x}^{-5}\rad^{-2}t-1$. Recalling the definition of $\Ei(\iota,\rho)$, one can prove that for each $k_i$, the probability for the random walk to avoid $\Oi\cup B^-$ until next $k_{i+1}$ is smaller than $1-c\rho$ (see~\cite[(5.6)]{DX18}). Therefore we obtain
\begin{equation}
\begin{split}
&\sup_{y\in\ball{0}{2N}}\bP_y\left(S_{[0,t]}\text{ visits $(\Ei(\iota,\rho)\setminus B^-)^c$ more than $t/4$ times and }\tau_{\Oi\cup B^-}>t\right)\\
&\quad \le (1-c\rho)^{\frac14 \delta_{N,x}^{-5}\rad^{-2}t-1}\\
&\quad \le \exp\left\{-c\delta_{N,x}^{-3}\rad^{-2}t\right\}
\end{split}
\label{many_exits}
\end{equation}
by recalling $\rho=(2\delta_{N,x})^2$.

It remains to show that under the condition~\eqref{eq:outside_thin},
\begin{equation}
\begin{split}
\sup_{y\in\ball{0}{2N}}\bP_y\left(S_{[0,t]}\text{ visits $(\Ei(\iota,\rho)\setminus B^-)^c$ less than $t/4$ times}\right)
\le \exp\left\{-\delta_{N,x}^{-c}\rad^{-2} t\right\}
\end{split}
\label{few_exits}
\end{equation}
for some $c>0$. To this end, we divide $[0,t]$ into sub-intervals of size $M^{2}\delta_{N,x}^{2C/d} \rad^{2}$ for some large $M>0$ to be determined later. We call a sub-interval \emph{successful} if the random walk spends more than half of the time in $(\Ei(\iota,\rho)\setminus B^-)^c$. If half of the intervals are successful, then the random walk visits $(\Ei(\iota,\rho)\setminus B^-)^c$ at least $t/4$ times.

For any $u \ge M\delta_{N,x}^{2C/d} \rad^{2}$, a well-known bound on the transition probability $\sup_{y,z\in\Z^d}\bP_z(S_u=y)\le cM^{-d/2}\delta_{N,x}^{-C} \rad^{-d}$ and~\eqref{eq:outside_thin} imply that
\begin{equation}
 \sup_{z\in\Z^d}\bP_z(S_u\in\Ei(\iota,\rho)\setminus B^-)\le cM^{-d/2}.
\end{equation}
From this and the so-called \emph{first moment method} (applied to the number of visits to $\Ei(\iota,\rho)\cup B^-$), one can easily deduce that the probability for an interval $\delta_{N,x}^{2C/d} \rad^{2}[kM^2,(k+1)M^{2}]$ to be successful is more than $1/2$ for large $M$ (see~\cite[Lemma~6.4]{DX18}). Since there are $M^{-2}\delta_{N,x}^{-2C/d} \rad^{-2}t$ intervals that intersect $[0,t]$, a simple large deviation estimate yields
\begin{equation}
 \sup_{z\in\Z^d}\bP_z(\text{half of those intervals are not successful})
\le \exp\left\{-cM^{-2}\delta_{N,x}^{-2C/d} \rad^{-2}t\right\}.
\end{equation}
Combining~\eqref{few_exits} and~\eqref{many_exits}, we get Lemma~\ref{lem:slow_cross}.
\end{proof}

\subsection{Sketch proof of Lemma~\ref{lem:odensity}}
\label{sec:odensity}

This is an analogue of~\cite[Lemma~2.1]{DFSX18} and can be proved by almost the same argument. We recall the outline and indicate where we need an additional argument.

The proof of~\cite[Lemma~2.1]{DFSX18} is based on an environment and path switching argument. Suppose that $v\in\Oi$ and $|\Oi\cap\ball{v}{l}|<\delta|\ball{v}{l}|$. First, if the random walk frequently visits $\ball{v}{l/2}$, then we simply remove all the obstacles in $\ball{v}{l}$. This causes a cost in $\P$-probability but not too much since $\delta$ is small. On the other hand, we gain a lot in $\bP$-probability since $\ball{v}{l/2}$ is visited frequently and it turns out that the gain beats the cost. It follows that
\begin{equation}
E_l^\delta(v)\cap\{\ball{v}{l/2}\text{ is visited frequently}\}
\end{equation}
is much less likely than $\{\Oi\cap \ball{v}{l}=\emptyset\}$ under $\P\otimes\bP$ and hence $\mu_N$.
Second, if the random walk rarely visits $\ball{v}{l}$, then we deform the random walk paths to avoid $\ball{v}{l/2}$. This causes a cost in $\bP$-probability but not too much since the random walk visits $\ball{v}{l/2}$ only rarely. On the other hand, after this operation, we can change the configuration of $\Oi\cap \ball{v}{l/2}$ to a typical one. As we started from an atypical low density configuration, we gain a lot in $\P$-probability and it turns out that the gain beats the cost. It follows that
\begin{equation}
E_l^\delta(v)\cap\{\ball{v}{l/2}\text{ is visited rarely}\}
\end{equation}
is much less likely than $\{|\Oi\cap \ball{v}{l}|\ge \delta|\ball{v}{l}\}$ under $\mu_N$.

When $0\in\ball{v}{l/2}$, the argument in the second case, where the random walk rarely visits $\ball{v}{l}$, requires a modification since we cannot change the starting point of the random walk. In this situation, we create a one-dimensional path from $0$ to $\partial\ball{0}{l/2}$ free of obstacles and force the random walk to follow that path. The only difference in the setting of the present article is that we have the same problem when $x\in B(v,l/2)$, since we cannot change the endpoint. But this can be treated in the same way as the case $0\in\ball{v}{l/2}$.

Finally, in the following remark, we explain a technical point which forces us to work under $\mu_{N,x}(\cdot)=\P\otimes\bP(\cdot\mid \tau_\Oi>\tau_x^N)$ instead of $\P\otimes\bP(\cdot\mid \tau_\Oi>N, S_N=x)$.
\begin{remark}
 \label{rem:switch}
In the case that $\ball{v}{l/2}$ is rarely visited, we deform the random walk path to avoid $\ball{v}{l/2}$, which may lengthen the path. Therefore the condition $S_N=x$ is not preserved by the above argument but $\tau_\Oi>\tau_x^N$ is. This is why we work with $\mu_{N,x}$.
\end{remark}

\section{Time spent outside the vacant ball}
\label{sec:outsideB}
In this section, we prove several results concerning the behavior of the random walk outside the vacant ball $B^-$ defined in~\eqref{eq:def_B-}. The first one, Proposition~\ref{prop:outsideB} to be proved in Section~\ref{sec:time_outside}, shows that the random walk does not spend too much time before the first visit and after the last visit to the ball $B^-$. The second one, Proposition~\ref{prop:confine} to be proved in Section~\ref{sec:confine}, shows that between the first and last visit to $B^-$, the random walk is confined in a slightly larger ball. By the same argument, we show in Corollary~\ref{cor:rho^2} that the random walk returns to $B^-$ frequently between the first and the last visit to $B^-$.

Let us write $\tau^{\leftarrow}_{B^-}$ for the last visit to $B^-$ before $\tau_x^N$, which is the first hitting time of $B^-$ by the time-reversed random walk.

\subsection{First and last visits to the vacant ball}
\label{sec:time_outside}
\begin{proposition}
\label{prop:outsideB}
Let $\delta_{N,x}$ and $\cent$ be as in~\eqref{eq:def_delta} and Proposition~\ref{prop:vacant}, respectively.
There exist $c_{\ref{prop:outsideB}}>0$ such that when $\epsilon>0$ is small depending on $d$ and $p$ and $|x|\le \epsilon\rad^d$,
\begin{align}
\label{eq:hitting}
 \mu_{N,x}\left(\tau_{B^-} \ge \delta_{N,x}^{c_{\ref{prop:outsideB}}}|\cent|_1\rad^2 \right)
\le \exp\left\{-\tfrac12(\log N)^2\right\}
\end{align}
and
\begin{align}
\label{eq:leaving}
 \mu_{N,x}\left(\tau_x^N-\tau^{\leftarrow}_{B^-} \ge \delta_{N,x}^{c_{\ref{prop:outsideB}}}|x-\cent|_1\rad^2 \right)
\le \exp\left\{-\tfrac12(\log N)^2\right\}
\end{align}
for all sufficiently large $N$.
\end{proposition}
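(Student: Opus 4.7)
The two bounds~\eqref{eq:hitting} and~\eqref{eq:leaving} are symmetric under time-reversal of the walk, which exchanges the roles of $0$ and $x$ (and hence of $|\cent|_1$ and $|x-\cent|_1$); I focus on~\eqref{eq:hitting}. The intuition is simple: Lemma~\ref{lem:slow_cross} penalizes survival outside $B^-$ at the much larger rate $\delta_{N,x}^{-c_{\ref{lem:slow_cross}}}\rad^{-2}$ than the $O(\rad^{-2})$ cost of surviving inside, so under the conditioning the walk should enter $B^-$ essentially immediately. First, I would restrict to the typical environment on which both Proposition~\ref{prop:vacant} and Lemma~\ref{lem:slow_cross} hold; its complement has $\mu_{N,x}$-probability at most $2\exp\{-(\log N)^2\}$. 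On this environment, $\{\tau_{B^-} > 0\}$ forces $|\cent|_2 > (1-2\delta_{N,x}^{c_{\ref{eq:vacant}}})\rad$, and hence $|\cent|_1 \gtrsim \rad$; otherwise the origin lies in $B^-$ and the event is empty.

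Set $T := \delta_{N,x}^{c_{\ref{prop:outsideB}}}|\cent|_1\rad^2$ for a small $c_{\ref{prop:outsideB}} \in (0,\,c_{\ref{lem:slow_cross}})$ to be fixed. The strong Markov property at time $T$, followed by Lemma~\ref{lem:slow_cross}, yields on the typical environment
\begin{equation*}
\bP(\tau_\Oi > \tau_x^N,\, \tau_{B^-} \geq T)
\leq \exp\{-\delta_{N,x}^{-c_{\ref{lem:slow_cross}}}\rad^{-2}T\}
\sup_{y \in \ball{0}{2N}} \bP_y(\tau_\Oi > \tau_x^{(N-T)^+}).
\end{equation*}
For a matching lower bound on the partition function, fix $z \in \partial B^-$ closest to $0$ and an $\ell^1$-geodesic $\pi$ from $0$ to $z$, noting that $|\pi| \leq C\rad$. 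Requiring $\Oi \cap \pi = \emptyset$ and the walk to follow $\pi$ in its first $|\pi|$ steps, and applying FKG to the decreasing events $\{\Oi \cap \pi = \emptyset\}$ and $\{\tau_\Oi > \tau_x^{N-|\pi|}\}$ (under $\bP_z$), gives
\begin{equation*}
\P\otimes\bP(\tau_\Oi > \tau_x^N)
\geq (p/(2d))^{|\pi|}\, \P\otimes\bP_z(\tau_\Oi > \tau_x^{(N-|\pi|)^+}).
\end{equation*}

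The crux of the proof is then the ratio bound
\begin{equation*}
\frac{\sup_y \P\otimes\bP_y(\tau_\Oi > \tau_x^{(N-T)^+})}{\P\otimes\bP_z(\tau_\Oi > \tau_x^{(N-|\pi|)^+})}
\leq \exp\{C(|\cent|_1 + T\rad^{-2} + \rad)\}.
\end{equation*}
Translation invariance of $\P$ reduces both sides to the form $\P\otimes\bP_0(\tau_\Oi > \tau_{x'}^{N'})$; the dependence on $N' \in [N-T, N]$ contributes at most $\exp\{CT\rad^{-2}\}$ via the Donsker--Varadhan estimate~\eqref{eq:DV}, and the shift of the starting point, after restricting to $|S_T| \leq C\rad^{3/2}$ at negligible cost via standard random walk displacement bounds, is absorbed by another FKG entry-path argument of the same type as above (contributing at most $\exp\{C(|\cent|_1 + \rad)\}$ after appealing to the shape theorem~\eqref{eq:shape_thm}).

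Combining the three displays yields
\begin{equation*}
\mu_{N,x}(\tau_{B^-} \geq T)
\leq \exp\bigl\{-(\delta_{N,x}^{c_{\ref{prop:outsideB}}-c_{\ref{lem:slow_cross}}} - C)|\cent|_1 + C\rad\bigr\},
\end{equation*}
which, upon choosing $c_{\ref{prop:outsideB}}$ sufficiently smaller than $c_{\ref{lem:slow_cross}}$ and using $|\cent|_1 \gtrsim \rad \gg (\log N)^2$, is bounded by $\exp\{-\tfrac12(\log N)^2\}$ for $N$ large. The statement~\eqref{eq:leaving} follows by the same argument applied to the time-reversed walk (which is again a simple random walk, now starting at $x$ and ending at $0$). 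The main obstacle is the ratio bound in the third display: controlling annealed survival-to-hit probabilities uniformly over the possible position of $S_T$, which requires combining translation invariance of $\P$, Donsker--Varadhan asymptotics~\eqref{eq:DV}, and an FKG entry-path construction.
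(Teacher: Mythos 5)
Your intuition is right, and you have assembled the correct ingredients (the slow-crossing estimate, the Markov property, FKG plus an entry-path construction, and time reversal for~\eqref{eq:leaving}). But there are two structural gaps in the way you put them together, and the paper's proof is organized differently precisely to avoid them.

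First, in your partition-function lower bound the path $\pi$ is chosen to run from $0$ to a point $z\in\partial B^-(\cent)$, and $\cent=\cent(\Oi)$ is \emph{random}. You then want to apply FKG to the decreasing events $\{\Oi\cap\pi=\emptyset\}$ and $\{\tau_\Oi>\tau_x^{N-|\pi|}\}$, but FKG requires $\pi$ to be deterministic; since $\pi$ is measurable with respect to $\Oi$, the application is ill-posed. The paper handles this by first fixing a deterministic triple $(y,z,n)$, proving a bound for the event $\{\tau_{B^-(z)}=n,\,S_n=y\}$ with the deterministic $z$ in place of $\cent$, and taking a polynomial-in-$N$ union bound at the end. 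Your argument is missing this union-bound-over-$z$ step.

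Second, and more seriously, you apply the Markov property at the deterministic time $T$ and then take $\sup_{y}\bP_y(\tau_\Oi>\tau_x^{(N-T)^+})$. This decouples the position $y=S_T$ from the specific factor $\bP_y(\tau_\Oi>\tau_x^{N-n})$ that appears in the partition-function lower bound, and forces you to compare $\sup_y \P\otimes\bP_y(\cdots)$ against a single $\P\otimes\bP_z(\cdots)$. That comparison ("the crux") is not established, and the bound you claim for it does not appear to hold: at time $T\le\delta_{N,x}^{c}\,d\,\rad^3$ the walk can be at distance of order $\rad^{3/2}$ (up to logs) from the origin with probability larger than $\exp\{-(\log N)^2\}$, so by translation invariance of $\P$ the numerator can be evaluated at a point $x-y$ that is closer to $\ball{0}{2\rad}$ by $\Theta(\rad^{3/2})$. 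The resulting ratio is then as large as $\exp\{c\rad^{3/2}\}$, which the slow-crossing gain $\exp\{-\delta_{N,x}^{c_{\ref{prop:outsideB}}-c_{\ref{lem:slow_cross}}}|\cent|_1\}\ge\exp\{-C\rad\}$ cannot beat. Moreover, any attempt to control the numerator and denominator separately via~\eqref{eq:DV} runs into the $o(N^{d/(d+2)})$ error, which swamps everything.

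The paper sidesteps both problems with one device: it applies the Markov property at the \emph{random} time $n=\tau_{B^-(z)}$ and conditions on $S_n=y\in B^-(z)$. Then the very same factor $\bP_y(\tau_\Oi>\tau_x^{N-n})$ appears in the upper bound for the crossing event and in the strategy-based lower bound for $\P\otimes\bP(\tau_\Oi>\tau_x^N)$ (by construction: follow $\pi(0,z)$, stay inside the vacant ball until time $n-\rad^2$, go to $y\in B^-(z)$ in the last $\rad^2$ steps, then continue). Since the unknown factor cancels, all that remains is the FKG switch removing the obstacles along the deterministic path $\pi(0,z)$, and the Donsker--Varadhan error term never enters. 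This is the key idea missing from your proposal.
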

\begin{proof}
We give a proof of~\eqref{eq:hitting}. One can prove~\eqref{eq:leaving} similarly by considering the time-reversed random walk. Thanks to Proposition~\ref{prop:vacant} and Lemma~\ref{lem:slow_cross}, we may assume that there exists $z\in\ball{0}{2N}$ such that
\begin{align}
&\Oi\cap\ball{z}{(1-\delta_{N,x}^{c_{\ref{prop:vacant}}})\rad}=\emptyset,\label{eq:vacant2}\\
&\bP_y\left(S_{[0,t]}\cap \left(\Oi \cup B^-(z)\right)=\emptyset\right)
\leq \exp\left\{-\delta_{N,x}^{-c_{\text{\upshape{\ref{lem:slow_cross}}}}} \rad^{-2} t\right\}
\label{eq:forest2}
\end{align}
for all $y\in\ball{0}{2N}$ and $t\ge \delta_{N,x}^{c_{\text{\upshape{\ref{lem:slow_cross}}}}} (\log N)^2\rad^{2}$. In particular, it follows that
\begin{equation}
\bP\left(\tau_\Oi\wedge \tau_{B^-(z)}>N/2\right)
\le \exp\left\{-c\delta_{N,x}^{-c_{\ref{lem:slow_cross}}}N^{\frac{d}{d+2}}\right\}.
\end{equation}
Comparing with Proposition~\ref{prop:LDPlower} and using that $\lim_{\epsilon\to0}\lim_{N\to\infty}\delta_{N,x}=0$, we find that the random walk hits $B^-(z)$ with high probability:
\begin{equation}
\mu_{N,x}\left({\text{\eqref{eq:vacant2}, \eqref{eq:forest2}},}\ \tau_{B^-(z)}>N/2\right)
\le \exp\left\{-c\delta_{N,x}^{-{c_{\ref{lem:slow_cross}}}}N^{\frac{d}{d+2}}\right\}
\end{equation}
for all sufficiently large $N$ when $\epsilon$ is small. Now let us fix $c_{\ref{prop:outsideB}}<c_{\ref{lem:slow_cross}}$, $z\in\ball{0}{2N}$, $y\in B^-(z)$ and $n\in [\delta_{N,x}^{{c_{\ref{prop:outsideB}}}}|z|_1\rad^2,N/2]$. To prove~\eqref{eq:hitting}, it suffices to show that
\begin{equation}
 \mu_{N,x}\left(\text{\eqref{eq:vacant2}, \eqref{eq:forest2}}, \tau_{B^-(z)}=n, S_n=y\right)
\le \exp\left\{-(\log N)^3\right\}
\label{eq:yzn_fixed}
\end{equation}
since the number of possible choices of $(y,z,n)$ is polynomial in $N$. We will only consider $|z|_1\ge \rad/2$ since otherwise $\tau_{B^-(z)}=0$ almost surely under $\mu_{N,x}$.
By using the Markov property at time $n$ and~\eqref{eq:forest2}, we find that
\begin{equation}
\begin{split}
\bP\left(\tau_{B^-(z)}=n, S_n=y, \tau_\Oi>\tau_x^N\right)
& \le \bP\left(\tau_\Oi>\tau_{B^-(z)}=n, S_n=y\right)\bP_y\left(\tau_\Oi>\tau_x^{N-n}\right)\\
& \le \exp\left\{-\delta_{N,x}^{-c_{\ref{lem:slow_cross}}}n\rad^{-2}\right\}\bP_y\left(\tau_\Oi>\tau_x^{N-n}\right).
\end{split}
\label{eq:stayforest}
\end{equation}

In order to compare this with the partition function, let us fix a nearest neighbor path $\pi(0,z)$ of length $|z|_1$ connecting $0$ and $z$ and consider the events
\begin{align}
E_1&=\{S_{[0,|z|_1]}=\pi(0,z)\},\\
E_2&=\left\{S_{[|z|_1, n-\rad^2]}\subset\ball{z}{\rad/2}\right\},\\
E_3&=\left\{S_{[n-\rad^2,n]}\subset B({z};{(1-\delta_{N,x}^{c_{\ref{prop:vacant}}})\rad}), S_n=y\right\}.
\end{align}
Note that on the event $\{\text{\eqref{eq:vacant2}, }\Oi\cap\pi(0,z)=\emptyset\}$, we have $E_1\cap E_2\cap E_3\subset\{S_n=y,\tau_\Oi>n\}$.
Therefore by the Markov property and $|z|_1 \le \delta_{N,x}^{-c_{\ref{prop:outsideB}}}n\rad^{-2}$, we get
\begin{equation}
\begin{split}
&\bP\left(\tau_\Oi>\tau_x^N\right)\\
&\quad\ge \bP(E_1)\bP(E_2\mid S_{|z|_1}=z)\inf_{w\in\ball{z}{\rad/2}}\bP(E_3\mid S_{n-\rad^2}=w)\bP_y\left(\tau_\Oi>\tau_x^{N-n}\right)\\
&\quad\ge\left(\frac{1}{2d}\right)^{|z|_1}\exp\left\{-c(n-\rad^2-|z|_1)\rad^{-2}\right\} \frac{c}{\rad^{d+1}} \bP_y\left(\tau_\Oi>\tau_x^{N-n}\right)\\
&\quad\ge\exp\left\{-c\delta_{N,x}^{-c_{\ref{prop:outsideB}}}n\rad^{-2}\right\} \bP_y\left(\tau_\Oi>\tau_x^{N-n}\right),
\end{split}
\label{eq:switched}
\end{equation}
where we have used the random walk estimate~\eqref{eq:ballHK} for the second and third factors in the second line. 

Now we use a slight variant of the switching argument in~\cite{DFSX18}. We first use the Markov property at time $n$ and~\eqref{eq:stayforest} to obtain
\begin{equation}
\begin{split}
&\P\otimes\bP\left(\text{\eqref{eq:vacant2}, \eqref{eq:forest2}}, \tau_{B^-(z)}=n, S_n=y, \tau_\Oi>\tau_x^N\right)\\
&\quad\le \exp\left\{-c\delta_{N,x}^{-c_{\ref{lem:slow_cross}}}n\rad^{-2}\right\}\E\left[\bP_y\left(\tau_\Oi>\tau_x^{N-n}\right)\colon \text{\eqref{eq:vacant2}}\right].
\end{split}
\label{eq:switch1}
\end{equation}
Then we ``switch'' a given $\Oi$ satisfying~\eqref{eq:vacant2} by removing the obstacles on $\pi(0,z)$. Since $\bP_y\left(\tau_\Oi>\tau_x^{N-n}\right)$, $\text{\eqref{eq:vacant2}}$ and $\{\pi(0,z)\cap\Oi=\emptyset\}$ are all decreasing in $\Oi$, we can use the FKG inequality to obtain
\begin{equation}
\begin{split}
& \E\left[\bP_y\left(\tau_\Oi>\tau_x^{N-n}\right)\colon \text{\eqref{eq:vacant2}}\right]\\
&\quad \le p^{-|z|_1}\E\left[\bP_y\left(\tau_\Oi>\tau_x^{N-n}\right)\colon \text{\eqref{eq:vacant2}, }\Oi\cap\pi(0,z)=\emptyset\right].
\end{split}
\end{equation}
Substituting this and~\eqref{eq:switched} into~\eqref{eq:switch1} and recalling $c_{\ref{prop:outsideB}}<c_{\ref{lem:slow_cross}}$ and $|z|_1 \le \delta_{N,x}^{-c_{\ref{prop:outsideB}}}n\rad^{-2}$ again, we find
\begin{equation}
\begin{split}
&\P\otimes\bP\left(\text{\eqref{eq:vacant2}, \eqref{eq:forest2}}, \tau_{B^-(z)}=n, S_n=y, \tau_\Oi>\tau_x^N\right)\\
&\quad\le \exp\left\{-c\delta_{N,x}^{-c_{\ref{lem:slow_cross}}}n\rad^{-2}\right\}
\E\left[\bP_y\left(\tau_\Oi>\tau_x^{N-n}\right)\colon \text{\eqref{eq:vacant2}, }\Oi\cap\pi(0,z)=\emptyset\right].\\
&\quad \le \exp\left\{-c'\delta_{N,x}^{-c_{\ref{lem:slow_cross}}}n\rad^{-2}\right\}\P\otimes\bP\left(\tau_\Oi>\tau_x^N\right)
\end{split}
\end{equation}
for all sufficiently large $N$. Recalling $n\ge \delta_{N,x}^{{c_{\ref{prop:outsideB}}}}|z|_1\rad^2$ and $|z|_1\ge \rad/2$, this implies~\eqref{eq:yzn_fixed} and we are done.
\end{proof}
\subsection{Confinement between the first and last visits to the vacant ball}
\label{sec:confine}
Let us introduce a ball concentric to $B^-$ with a larger radius by
\begin{equation}
\begin{split}
B^+(z)&=\ball{z}{(1+\delta_{N,x}^{c_{\ref{lem:slow_cross}}{/2}}{(\log N)^3})\rad},\\
B^+&=B^+(\cent).
\end{split}
\label{eq:def_B+}
\end{equation}
Note that by our definition of $\delta_{N,x}$ in~\eqref{eq:def_delta}, this is much larger than $B^-$, see~\eqref{eq:def_B-}, when $|x|$ is close to $\rad^d$. We will explain the reason in Remark~\ref{rem:LargeBall}.
In the following proposition, we show that $S_{[\tau_{B^-}, \tau^{\leftarrow}_{B^-}]}$ is confined in $B^+$ with high probability under $\mu_{N,x}$.
\begin{proposition}
\label{prop:confine}
When $\epsilon>0$ is small depending on $d$ and $p$ and $|x|\le \epsilon\rad^d$,
\begin{equation}
\mu_{N,x}\left(S_{[\tau_{B^-},\tau^{\leftarrow}_{B^-}]} \not\subset B^+\right)\\
\le \exp\left\{-\tfrac13(\log N)^2\right\}
\label{eq:conf_middle}
\end{equation}
for all sufficiently large $N$.
\end{proposition}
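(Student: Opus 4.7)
The plan is to adapt the switching argument of Proposition~\ref{prop:outsideB} to the excursion that exits $B^+$. Work on the intersection $G$ of the events from Proposition~\ref{prop:vacant} (so $B^-$ is vacant) and Lemma~\ref{lem:slow_cross} (slow crossing of $\Oi\cup B^-$); a union bound gives $\mu_{N,x}(G^c)\le 2\exp\{-(\log N)^2\}$. On the bad event $A=\{S_{[\tau_{B^-},\tau^\leftarrow_{B^-}]}\not\subset B^+\}$, let $\sigma=\inf\{t\ge \tau_{B^-}:S_t\notin B^+\}$ and define the enclosing excursion via $\tau=\sup\{t\le\sigma:S_t\in B^-\}$ and $\tau'=\inf\{t\ge\sigma:S_t\in B^-\}$. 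The endpoints $y=S_\tau$, $z=S_{\tau'}$ lie one $\ell^1$-step from $\partial B^-$, the duration $L:=\tau'-\tau$ is at least the geometric gap $r:=\tfrac12\delta_{N,x}^{c_{\ref{lem:slow_cross}}/2}(\log N)^3\rad$ between $B^-$ and $(B^+)^c$, and $S_{(\tau,\tau')}\cap(\Oi\cup B^-)=\emptyset$. I would union-bound over the polynomially many values of $(n,n',y,z)=(\tau,\tau',S_\tau,S_{\tau'})$.

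For each such tuple, the strong Markov property at times $n$ and $n'$ yields
\[
\bP(\tau=n,\tau'=n',S_\tau=y,S_{\tau'}=z,\tau_\Oi>\tau_x^N\mid\Oi)\le \bP(\tau_\Oi>n,S_n=y\mid\Oi)\,Q\,\bP_z(\tau_\Oi>\tau_x^{N-n'}\mid\Oi),
\]
where $Q=\bP_y(S_L=z,\,S_{(0,L)}\cap(\Oi\cup B^-)=\emptyset,\,\sup_{s\le L}|S_s-y|\ge r)$. Since $B^-$ is vacant on $G$, a matching lower bound on $\bP(\tau_\Oi>\tau_x^N\mid\Oi)$ comes from letting the walk on $[n,n']$ stay inside $B^-$: moving from $y$ toward $\cent$ in $\rad^2$ steps, loitering there for $L-2\rad^2$ steps, and returning to $z$ in another $\rad^2$ steps when $L\ge 2\rad^2$, with a more elementary path construction for $L<2\rad^2$. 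Applying~\eqref{eq:ballHK} inside the vacant ball yields a lower bound $P_{\text{in}}(y,z,L)\ge \rad^{-O(1)}\exp\{-L/(c\rad^2)\}$, so the ratio reduces to $Q/P_{\text{in}}(y,z,L)$.

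I then case-split on $L$. When $L\ge T_0:=\delta_{N,x}^{c_{\ref{lem:slow_cross}}}(\log N)^2\rad^2$, Lemma~\ref{lem:slow_cross} applied at time $\tau+1$ gives $Q\le \exp\{-\delta_{N,x}^{-c_{\ref{lem:slow_cross}}}L/\rad^2\}$; since $\delta_{N,x}^{-c_{\ref{lem:slow_cross}}}\gg 1/c$, the ratio is at most $\rad^{O(1)}\exp\{-\tfrac12\delta_{N,x}^{-c_{\ref{lem:slow_cross}}}L/\rad^2\}\le\exp\{-\tfrac12(\log N)^2\}$ uniformly in $L\ge T_0$. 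When $r\le L<T_0$, Lemma~\ref{lem:slow_cross} has no bite, so I would drop the survival constraint from $Q$ and bound it by the pure SRW transversal probability
\[
Q\le \bP_y\!\left(\sup_{s\le L}|S_s-y|\ge r\right)\le c\exp\{-cr^2/L\}\le c\exp\{-c(\log N)^4\}
\]
via Azuma--Hoeffding, since $r^2/T_0=\tfrac14(\log N)^4$. This easily dominates the inverse of $P_{\text{in}}(y,z,L)$, which in this regime is at most $\rad^{O(1)}\exp\{O(\delta_{N,x}^{c_{\ref{lem:slow_cross}}}(\log N)^2)\}$.

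The main obstacle is precisely the short-excursion regime $L<T_0$, where Lemma~\ref{lem:slow_cross} no longer controls the survival probability. The extra $(\log N)^3$ factor in the definition of $B^+$ (rather than merely $\log N$) was tailored so that $r^2/T_0$ is of order $(\log N)^4$, ensuring the elementary SRW displacement bound is strong enough to absorb the polynomial union bound over $(n,n',y,z)$. Combining both cases with $\mu_{N,x}(G^c)\le 2\exp\{-(\log N)^2\}$ yields~\eqref{eq:conf_middle}.
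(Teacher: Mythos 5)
Your overall strategy mirrors the paper's closely: identify the excursion out of $B^+$, Markov-decompose at its endpoints, compare the middle piece with a ``stay in the vacant ball'' alternative, and case-split on the excursion length with Lemma~\ref{lem:slow_cross} handling the long regime and a maximal-displacement estimate handling the short regime. The threshold $T_0=\delta_{N,x}^{c_{\ref{lem:slow_cross}}}(\log N)^2\rad^2$ and the role of the $(\log N)^3$ factor in $B^+$ are exactly as in the paper.

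However, there is a genuine gap in the short-excursion regime. Your replacement strategy has \emph{exactly} $L=n'-n$ steps, and you claim $P_{\text{in}}(y,z,L)\ge \rad^{-O(1)}\exp\{-L/(c\rad^2)\}$ for all $L$, so that for $L<T_0$ the inverse is at most $\rad^{O(1)}\exp\{O(\delta_{N,x}^{c_{\ref{lem:slow_cross}}}(\log N)^2)\}$. This fails when $L<\rad^2/2$: the estimate~\eqref{eq:ballHK} only applies for $n\ge R^2/2$, and nothing prevents $|y-z|_1$ from being a constant fraction of $L$ (both $y,z\in\partial B^-$ with $|y-z|_1\le L$, and $L$ can be as small as roughly $2r\approx\delta_{N,x}^{c_{\ref{lem:slow_cross}}/2}(\log N)^3\rad\ll\rad^2$). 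When $|y-z|_1\approx L$ every $L$-step path from $y$ to $z$ is a near-geodesic, so $P_{\text{in}}(y,z,L)\le c\,2^{-cL}$, exponentially small in $L$ — vastly smaller than $\rad^{-O(1)}$ since $L\ge 2r\gg(\log N)^2$. Then $Q/P_{\text{in}}$ (with $Q$ bounded only by the maximal-displacement probability $\exp\{-cr^2/L\}$, having dropped $S_L=z$) can be huge, and the case does not close. The paper avoids this precisely by comparing with a strategy of $L+\rad^2$ steps (note the endpoint $t_2+\rad^2$ in~\eqref{eq:stay_inside2}), so that~\eqref{eq:ballHK} always applies, and then absorbing the extra $\rad^2$ steps via $\{\tau_\Oi>\tau_x^{N+\rad^2}\}\subset\{\tau_\Oi>\tau_x^N\}$. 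To repair your argument you would need either this padding trick, or to retain the constraint $S_L=z$ in $Q$ and match the Gaussian factors $\exp\{-c|y-z|^2/L\}$ on both sides, which is considerably more delicate because the constants in the upper and lower heat-kernel bounds differ.
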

\begin{proof}
Throughout this proof, we assume that~\eqref{eq:vacant} and~\eqref{eq:slow_cross} hold, that is, there exists a vacant ball of radius almost $\rad$ and the outside is dangerous for the random walk.

Suppose that $S_{[\tau_{B^-},\tau^{\leftarrow}_{B^-}]}\not\subset B^+$. Then since we know $0\le \tau_{B^-}<\tau^{\leftarrow}_B<\tau_\Oi^N$ from Proposition~\ref{prop:outsideB}, there exist $[t_1,t_2]\subset [\tau_{B^-}, \tau^{\leftarrow}_{B^-}]$ such that $ S_{t_1}, S_{t_2} \in \partial B^-$,
\begin{equation}
\label{eq:crossing}
S_{[t_1,t_2]}\cap\left(\Oi\cup{B^-}\right)=\emptyset\text{ and }
S_{[t_1,t_2]}\cap (B^+)^c \neq\emptyset.
\end{equation}
Therefore, by using the union bound and the Markov property, we have
\begin{equation}
\begin{split}
& \bP\left(S_{[\tau_{B^-},\tau^{\leftarrow}_{B^-}]}\not\subset B^+, \tau_\Oi>\tau^N_x\right)\\
&\quad \le \sum_{t_1,t_2,x_1,x_2}
\bP\left(S_{t_1}=x_1,\tau_\Oi>t_1\right)
\bP_{x_1}(\eqref{eq:crossing}, S_{t_2-t_1}=x_2)
\bP_{x_2}\left(\tau_\Oi>\tau^{N-t_2}_x\right),
\end{split}
\label{eq:decomposition}
\end{equation}
where the above sum runs over $0\le t_1<t_2\le 2N$ and $x_1,x_2\in \partial B^-$. 
We are going to show that the middle term on the right-hand side of~\eqref{eq:decomposition} is much smaller than the probability of
\begin{equation}
S_{[t_1,t_2+\rad^2]}\subset \ball{\cent}{(1-\delta_{N,x}^{c_{\ref{prop:vacant}}})\rad}\text{ and } S_{t_2+\rad^2}=x_2.
\label{eq:stay_inside2}
\end{equation}
Since we assumed~\eqref{eq:vacant}, this in particular implies $S_{[t_1,t_2+\rad^2]}\cap\Oi=\emptyset$. Let us first get a lower bound on the probability of~\eqref{eq:stay_inside2}. Using~\eqref{eq:ballHK}, we obtain
\begin{equation}
 \min_{x_1, x_2\in \partial B^-}
 \bP_{x_1}\left(\eqref{eq:stay_inside2}\right)
\ge \frac{c}{\rad^{d+2}
}\exp\left\{-c^{-1}(t_2-t_1)\rad^{-2}\right\}.
\label{eq:stay_inside}
\end{equation}

Next we get an upper bound on the probability of~\eqref{eq:crossing} which splits into two cases.
\medskip

\noindent
\textbf{Case 1}: $t_2-t_1\ge \delta_{N,x}^{c_{\ref{lem:slow_cross}}}{(\log N)^2}\rad^2$. In this case, we only consider the first condition in~\eqref{eq:crossing}. Then since we are assuming~\eqref{eq:slow_cross}, we have

\begin{equation}
\begin{split}
 \bP_{x_1}(\eqref{eq:crossing})
& \le \exp\left\{-\delta_{N,x}^{-c_{\ref{lem:slow_cross}}}\rad^{-2}(t_2-t_1)\right\}\\
& \le {\exp\left\{-(\log N)^2\right\}\bP_{x_1}\left(\eqref{eq:stay_inside2}\right)}
\end{split}
\label{eq:slow}
\end{equation}
for all sufficiently large $N$ when $\epsilon$ is small.\\
\smallskip

\noindent\textbf{Case 2}: $t_2-t_1\le {\delta_{N,x}^{c_{\ref{lem:slow_cross}}}(\log N)^2}\rad^2$. In this case, we only consider the second condition in~\eqref{eq:crossing}, which implies that the maximal displacement of the random walk on $[t_1,t_2]$ is larger than $\delta_{N,x}^{c_{\ref{lem:slow_cross}}{/2}}{(\log N)^3}\rad$. Then, the Gaussian heat kernel estimate and the reflection principle yield
\begin{equation}
\begin{split}
\bP_{x_1}(\eqref{eq:crossing})
&\le \exp\left\{-c\frac{(\delta_{N,x}^{c_{\ref{lem:slow_cross}}{/2}}{(\log N)^3}\rad)^2}{t_2-t_1}\right\}\\
&\le {\exp\left\{-(\log N)^2\right\}\bP_{x_1}\left(\eqref{eq:stay_inside2}\right)}
\end{split}
\label{eq:fast}
\end{equation}
for all sufficiently large $N$ when $\epsilon$ is small.\\

Substituting~\eqref{eq:slow} and~\eqref{eq:fast} into~\eqref{eq:decomposition}, 
we find that
\begin{equation}
\begin{split}
& \bP\left(S_{[\tau_{B^-},\tau^{\leftarrow}_{B^-}]}\not\subset B^+, \tau_\Oi>\tau^N_x\right)\\
&\quad \le \exp\left\{-{(\log N)^2}\right\}
\sum_{t_1,t_2,x_1,x_2}
\bP\left(S_{t_1}=x_1, S_{t_2-t_1+\rad^2}=x_2, \tau_\Oi>\tau^{N+\rad^2}_x\right)\\
&\quad \le cN^{2+2d} \exp\left\{-{(\log N)^2}\right\}
\bP\left(\tau_\Oi>\tau^N_x\right),
\end{split}
\end{equation}
where in the last line, we have used that $t_1,t_2\in[0,2N]$ and $x_1,x_2\in\ball{0}{2N}$ (recall Remark~\ref{rem:macroball}). Integrating both sides with respect to $\P$, we complete the proof of~\eqref{eq:conf_middle}.
\end{proof}
\begin{remark}
\label{rem:LargeBall}
The super-polynomial rate of decay in~\eqref{eq:conf_middle} will be used later in the proof of Theorem~\ref{thm:confine}. To achieve this, as well as to counterbalance the factor $N^{2+2d}$ in the last step of the proof, we had to include $(\log N)^2$ factor in the condition for $t_2-t_1$ in Case~1 since $\delta_{N,x}$ can be as large as $\epsilon$. Then in Case~2, we needed an extra $(\log N)^3$ factor in the displacement.
This is why we included $(\log N)^3$ in~\eqref{eq:def_B+}.
\end{remark}

By the same argument, we can show that the random walk returns to $B^-$ frequently. This result will be used later to replace our condition $\{\tau_\Oi>\tau_x^N\}$ by $\{\tau_\Oi>N, S_N=x\}$ when $x$ is close to $2\rad\be_h$.
\begin{corollary}
\label{cor:rho^2}
For any $|x|\le 3\rad$,
\begin{equation}
\mu_{N,x}\left(\exists k\in [\tau_{B^-},\tau^{\leftarrow}_{B^-}-\rad^2],
S_{[k,k+\rad^2]}\cap B^-=\emptyset\right)
\le \exp\left\{-\tfrac13(\log N)^2\right\}
\label{eq:rho^2}
\end{equation}
for all sufficiently large $N$.
\end{corollary}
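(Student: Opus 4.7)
The plan is to mimic the proof of Proposition~\ref{prop:confine} nearly verbatim, replacing the confinement condition $\{S_{[t_1,t_2]}\cap B^+\neq\emptyset\}$ that appears in~\eqref{eq:crossing} by the direct length condition $t_2-t_1\ge \rad^2$. The key simplification is that, under the hypothesis $|x|\le 3\rad$, the definition~\eqref{eq:def_delta} forces $\delta_{N,x}=\rad^{-1/5}$, so $\delta_{N,x}^{c_{\text{\upshape\ref{lem:slow_cross}}}}(\log N)^2 \to 0$ as $N\to\infty$. Hence every excursion of length at least $\rad^2$ automatically lies in the slow-crossing regime of Lemma~\ref{lem:slow_cross} (Case~1 in the proof of Proposition~\ref{prop:confine}), and the Gaussian reflection argument of Case~2 is not needed here.

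Working on the intersection of the high-probability events~\eqref{eq:vacant} and~\eqref{eq:slow_cross}, and using Proposition~\ref{prop:outsideB} to guarantee $0\le \tau_{B^-}<\tau^{\leftarrow}_{B^-}<\tau_x^N$, the event in~\eqref{eq:rho^2} forces the existence of times $0\le t_1 < t_2 \le 2N$ with $t_2-t_1\ge \rad^2$ and boundary points $x_1,x_2\in\partial B^-$ such that $S_{t_1}=x_1$, $S_{t_2}=x_2$, and $S_{(t_1,t_2)}\cap(\Oi\cup B^-)=\emptyset$. The Markov property at $t_1$ and $t_2$ combined with a union bound gives
\begin{equation}
\bP\left(\text{event},\, \tau_\Oi>\tau_x^N\right)\le \sum_{\substack{t_1<t_2\\ t_2-t_1\ge\rad^2}}\,\sum_{x_1,x_2\in\partial B^-}\bP(\tau_\Oi>t_1, S_{t_1}=x_1)\,\bP_{x_1}(F_{t_2-t_1,x_2})\,\bP_{x_2}(\tau_\Oi>\tau_x^{N-t_2}),
\end{equation}
where $F_{s,y}=\{S_{[1,s]}\cap(\Oi\cup B^-)=\emptyset,\, S_s=y\}$.

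For the middle factor, Lemma~\ref{lem:slow_cross} applies (since $t_2-t_1\ge\rad^2$ exceeds the threshold $\delta_{N,x}^{c_{\text{\upshape\ref{lem:slow_cross}}}}(\log N)^2\rad^2$ for large $N$) and yields $\bP_{x_1}(F_{t_2-t_1,x_2})\le \exp\{-\delta_{N,x}^{-c_{\text{\upshape\ref{lem:slow_cross}}}}\rad^{-2}(t_2-t_1)\}$. I would then compare this with the probability that the walk instead spends $t_2-t_1+\rad^2$ steps inside the vacant ball $\ball{\cent}{(1-\delta_{N,x}^{c_{\text{\upshape\ref{prop:vacant}}}})\rad}$ and lands at $x_2$; by~\eqref{eq:ballHK} this replacement probability is bounded below by $\operatorname{poly}(\delta_{N,x},\rad^{-1})\exp\{-c^{-1}(t_2-t_1)\rad^{-2}\}$. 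Since $\delta_{N,x}\le \rad^{-1/5}$ forces $\delta_{N,x}^{-c_{\text{\upshape\ref{lem:slow_cross}}}}\ge \rad^{c_{\text{\upshape\ref{lem:slow_cross}}}/5}$ and $t_2-t_1\ge\rad^2$, the per-excursion ratio is at most $\exp\{-\tfrac12\rad^{c_{\text{\upshape\ref{lem:slow_cross}}}/5}\}$ for all sufficiently large $N$. Reassembling the three Markov pieces (with the middle one replaced by the vacant-ball detour between times $t_1$ and $t_2+\rad^2$) gives a lower bound on $\P\otimes\bP(\tau_\Oi>\tau_x^{N+\rad^2})$, which differs from $\P\otimes\bP(\tau_\Oi>\tau_x^N)$ only by a polynomial factor by~\eqref{eq:DV79}. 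Integrating against $\P$ and summing over the polynomially many quadruples $(t_1,t_2,x_1,x_2)$ produces a super-polynomial upper bound that is much smaller than the desired $\exp\{-\tfrac13(\log N)^2\}$.

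The main obstacle is the same bookkeeping already encountered in Proposition~\ref{prop:confine}: the substitute path must genuinely contribute to a lower bound for the full partition function rather than to an isolated random-walk estimate, which requires the Markov reassembly to remain FKG-compatible with the obstacle event~\eqref{eq:vacant}. Because the gain $\rad^{c_{\text{\upshape\ref{lem:slow_cross}}}/5}$ is polynomially large while the target $\tfrac13(\log N)^2$ is only poly-logarithmic, there is ample slack, and no delicate balancing is required.
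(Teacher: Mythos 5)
Your proposal is correct and follows the same route the paper intends (the paper's own proof just says to repeat the argument of Proposition~\ref{prop:confine} with $t_1,t_2$ taken to be the last/first visits to $B^-$ straddling $[k,k+\rad^2]$, noting $\delta_{N,x}=\rad^{-1/5}$ so that every such excursion falls into the slow-crossing Case~1). Two small remarks on your bookkeeping: the reassembled path lies in $\{\tau_\Oi>\tau_x^{N+\rad^2}\}\subset\{\tau_\Oi>\tau_x^N\}$, so the comparison to the partition function follows by direct inclusion with no need to invoke~\eqref{eq:DV79}; and since the detour does not alter the obstacle configuration, the FKG concern you flag does not actually arise here (FKG enters in Proposition~\ref{prop:outsideB}, not in Proposition~\ref{prop:confine}).
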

\begin{proof}
This can be proved in the same way as Proposition~\ref{prop:confine}. We again assume that~\eqref{eq:vacant} and~\eqref{eq:slow_cross} hold. If $S_{[k,k+\rad^2]}\cap B^-=\emptyset$, then we take $t_1$ (and $t_2$) to be the last (resp.~first) visit to $B^-$ before $k$ (resp.~after $k+\rad^2$). This probability can be bounded by $\exp\{-\delta_{N,x}^{-c_{\ref{lem:slow_cross}}}\rad^{-2}(t_2-t_1)\}$ by~\eqref{eq:slow_cross}. Comparing this with~\eqref{eq:stay_inside} and recalling that $\delta_{N,x}=\rad^{-1/5}$ when $|x|\le 3\rad$, we obtain~\eqref{eq:rho^2} as before.
\end{proof}

\section{Cost for the first and last pieces}
\label{sec:cost=dist}
In this section, we estimate the cost for the random walk to move from $0$ to $B^-$ and $B^-$ to $x$. Although it is natural to expect that they are measured by the Lyapunov distances ${\rm dist}_\beta(0,B^-)$ and ${\rm dist}_\beta(x,B^-)$, we will formulate the bound under the additional restriction that $\cent$ is fixed to be a generic point and it requires some preparation. The motivation for this formulation will be clear in Corollary~\ref{cor:confine}.

For each $|x|\le \epsilon\rad^d$ and $z\in\ball{0}{2N}$, we introduce
\begin{align}
t_{\text{out}}(x,z)&=\delta_{N,x}^{c_{\ref{prop:outsideB}}}(|z|_1+|x-z|_1)\rad^2.
\end{align}
and define a \emph{good} event by
\begin{align}
G(z)=\Bigl\{&{\Oi\cap\ball{z}{(1-\delta_{N,x}^{c_{\ref{eq:vacant}}})\rad}=\emptyset,}
\label{eq:vacant3}\\
&
B^-(z)\subset S_{[\tau_{B^-(z)},\tau^{\leftarrow}_{B^-(z)}]}\subset B^+(z)\setminus\Oi,
\label{eq:confine2}\\
& \tau^{\leftarrow}_{B^-(z)}-\tau_{B^-(z)} \ge N-t_{\text{out}}(x,z)\Bigr\}.
\label{eq:timeoutside1}
\end{align}
This event $G(z)$ morally corresponds to $\{\cent=z\}$ but is more explicit in the strategy of the random walk and the obstacle configuration.
Thanks to Propositions~\ref{prop:vacant},~\ref{prop:outsideB} and~\ref{prop:confine}, we know that
\begin{equation}
 \mu_{N,x}\left(\bigcup_{z\in B(0;2N)}G(z)\right)
\ge 1-\exp\left\{-\frac15(\log N)^2\right\}
\label{eq:G_good}
\end{equation}
for all sufficiently large $N$.

In this section, we are going to find an upper bound on
\begin{equation}
\P\otimes\bP\left(\tau_\Oi>\tau_x^N, G(z) \cond \tau_\Oi>N\right)
\end{equation}
by considering the following event that contains $\{\tau_\Oi>\tau_x^N\}\cap G(z)$:
\begin{equation}
\left\{S_{[0,\tau_{B^+(z)}]}\cap\Oi=\emptyset\right\}\cap \left\{S_{[\tau_{B^-(z)},\tau^{\leftarrow}_{B^-(z)}]}\cap\Oi=\emptyset,\eqref{eq:confine2}\right\} \cap \left\{S_{[\tau^{\leftarrow}_{B^+(z)},\tau_x^N]}\cap\Oi=\emptyset\right\},
\label{eq:3pieces}
\end{equation}
where note that we stop the first piece of random walk and the last reversed walk upon hitting $B^+(z)$, which is before hitting $B^-(z)$.
If we further specify the times $\tau_{B^-(z)}$ and $\tau^{\leftarrow}_{B^-(z)}$ and locations of the random walk at these times, then the second event is independent of the other two events and has $\P\otimes\bP$-probability not much larger than $\P\otimes\bP(\tau_\Oi>N)$ by~\eqref{eq:timeoutside1}. If the first and the third events in~\eqref{eq:3pieces} were independent, then their $\P\otimes\bP$-probabilities would decay exponentially in ${\rm dist}_\beta(0,B^-)$ and ${\rm dist}_\beta(x,B^-)$, respectively. 

However, the first and the third events in~\eqref{eq:3pieces} are not independent under $\P$ since the corresponding pieces of random walk path may overlap. For this reason, we will consider shorter pieces of the random walk path so that their survival depend on disjoint parts of environment. Let us denote the ball with respect to the Lyapunov norm in Definition~\ref{def:beta} by
\begin{equation}
 \Lball{u}{r}=\left\{v\in \Z^d\colon \beta(u-v)\le r\right\}
\end{equation}
and introduce
\begin{align}
r(z)&={\rm dist}_\beta(0,B^+(z)),\\
r(x,z)&={\rm dist}_\beta(x,B^+(z)\cup \Lball{0}{r(z)}).
\end{align}
Then by stopping the random walk and the time-reversed walk upon exiting ${\Lball{0}{r(z)}}$ and ${\Lball{x}{r(x,z)}}$, respectively, we find shortened paths which stay in disjoint sets (see Figure~\ref{fig:touching}).
\begin{figure}
 \centering
 \includegraphics{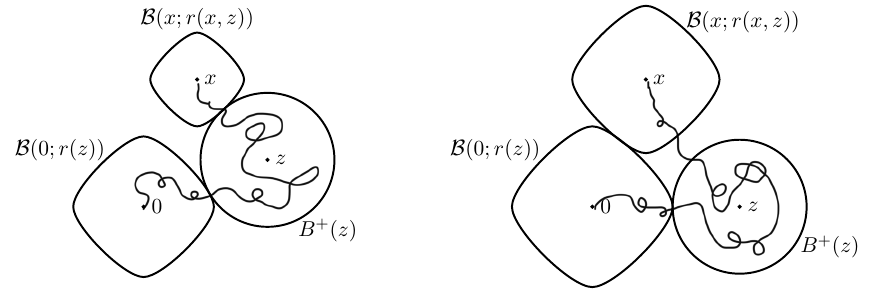}
\caption{
The convex shapes around $0$ and $x$ are the balls with respect to the Lyapunov norm with radius $r(0,z)$ and $r(x,z)$, respectively. The Euclidean ball centered at $z$ is $B^+(z)$ that has radius $(1+\delta_{N,x}^{c_{\ref{lem:slow_cross}}{/2}}{(\log N)^3})\rad$. In the left picture, $\Lball{x}{r(x,z)}$ touches $B^+(z)$ while in the right picture it touches $\Lball{0}{r(z)}$.}
\label{fig:touching}
\end{figure}
We are now ready to state the main result of this section.
\begin{proposition}
\label{prop:z_fixed}
When $\epsilon>0$ is small depending on $d$ and $p$, $|x|\le \epsilon\rad^d$ and $z\in\ball{0}{2N}$, 
\begin{equation}
\begin{split}
&\P\otimes\bP\left(\tau_\Oi>\tau_x^N, G(z)\cond \tau_\Oi>N\right)\\
&\quad \le \exp\left\{-(1-\epsilon)(r(z)+r(x,z))+\delta_{N,x}^{c_{\ref{prop:outsideB}}/2}(|z|_1+|x-z|_1) +(\log N)^2\right\}
\end{split}
\label{eq:z_fixed}
\end{equation}
for all sufficiently large $N$.
\end{proposition}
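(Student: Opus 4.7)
The plan is to use the inclusion~\eqref{eq:3pieces} to decompose $\{\tau_\Oi>\tau_x^N\}\cap G(z)$ into three segments of the walk that use essentially disjoint parts of the obstacle environment. Introduce
\begin{equation*}
R_1 = \Lball{0}{r(z)},\quad R_2 = B^+(z),\quad R_3 = \Lball{x}{r(x,z)},
\end{equation*}
which are mutually disjoint (up to a shared boundary) by the definitions of $r(z)$ and $r(x,z)$.

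The first event in~\eqref{eq:3pieces} forces the walk to exit $R_1$ before entering $R_2$ without hitting an obstacle, so it is contained in $\{\tau_\Oi>\tau_{R_1^c}\}$, an event depending on $\Oi$ only through $\Oi\cap R_1$. The symmetric statement applies to the third event, viewed via time-reversal from $x$ through $R_3$. The middle event in~\eqref{eq:3pieces} depends only on $\Oi\cap R_2$. After conditioning on the entry/exit locations $(t_1,y_1,t_2,y_2)$ of $B^-(z)$ and applying the Markov property at $t_1$ and $t_2$, the resulting three quenched survival probabilities depend on $\Oi$ through disjoint regions, so taking $\E$-expectation factorizes the product into three annealed probabilities.

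For the outer factors, the shape theorem~\eqref{eq:shape_thm} gives $\P\otimes\bP_0(\tau_\Oi>\tau_y)\le\exp\{-(1-\epsilon)\beta(y)\}\le\exp\{-(1-\epsilon)r(z)\}$ for $y\in\partial R_1$ once $r(z)$ is large, and summing over $y$ costs only a polynomial factor; analogously for $R_3$. For the middle factor, dropping confinement to $R_2$ yields the upper bound $\P\otimes\bP(\tau_\Oi>t_2-t_1)\le\P\otimes\bP(\tau_\Oi>N-t_{\text{out}}(x,z))$ by~\eqref{eq:timeoutside1}. Combining~\eqref{eq:DV79} with the Taylor estimate $N^{d/(d+2)}-(N-t)^{d/(d+2)}\le c\rad^{-2}t$ shows that the ratio of this middle factor to $\P\otimes\bP(\tau_\Oi>N)$ is at most $\exp\{c\delta_{N,x}^{c_{\ref{prop:outsideB}}}(|z|_1+|x-z|_1)\}$. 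Finally, summation over the polynomially many choices of $(t_1,t_2,y_1,y_2)$ and boundary points of $R_1,R_3$ contributes an $\exp\{O(\log N)\}$ factor absorbed into $\exp\{(\log N)^2\}$.

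The main technical obstacle is arranging the decomposition so that the three quenched factors genuinely depend on disjoint obstacle sets: the outer pieces must be stopped at the exits from $R_1$ and $R_3$ (a strict upper bound), not at the actual hitting times of $B^+(z)$, since the original walk paths may otherwise wander into the middle or the other outer region. A secondary issue is that the sub-leading Donsker--Varadhan error in~\eqref{eq:DV79} must be absorbable into $\delta_{N,x}^{c_{\ref{prop:outsideB}}/2}(|z|_1+|x-z|_1)+(\log N)^2$, which is what dictates the slightly weaker exponent $c_{\ref{prop:outsideB}}/2$ appearing in~\eqref{eq:z_fixed}.
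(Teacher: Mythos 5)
Your overall architecture matches the paper's proof: fix the entry/exit times and positions of $B^-(z)$, use the Markov property to split into three pieces, shorten the outer pieces to stay inside the Lyapunov balls $\Lball{0}{r(z)}$ and $\Lball{x}{r(x,z)}$ so that the three quenched factors depend on disjoint regions of $\Oi$ (yielding independence after taking $\E$), bound the outer factors via the shape theorem, and sum over the polynomially many choices. All of that is sound and is what the paper does.

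However, your treatment of the middle factor has a genuine gap. You propose to bound the ratio
$$
\frac{\P\otimes\bP\left(\tau_\Oi>N-t_{\text{out}}(x,z)\right)}{\P\otimes\bP\left(\tau_\Oi>N\right)}
$$
by combining \eqref{eq:DV79} with a Taylor estimate. But \eqref{eq:DV79} only controls the exponent to leading order, with an uncontrolled error of order $o\left(N^{d/(d+2)}\right)$, i.e.\ possibly as large as $\rad^{d-1}$ or so. The quantity you need to extract, $c\,\delta_{N,x}^{c_{\ref{prop:outsideB}}}\left(|z|_1+|x-z|_1\right)$, can be far smaller than this error (for instance of order $\rad$ when $|x|\asymp\rad$). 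So the two Donsker--Varadhan errors in numerator and denominator do not cancel, and your bound on the ratio does not follow. This is precisely the difficulty flagged at the start of Section~2 of the paper: one cannot prove these refined statements by invoking asymptotics of partition functions.

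The paper's workaround is to bound the middle factor at the \emph{quenched} level, before taking the $\E$-expectation, using the vacant-ball condition \eqref{eq:vacant3} built into $G(z;k,l)$. Starting from $\bP_{x_1}\left(\tau_\Oi>N-t_{\text{out}}(x,z)\right)$, the paper appends a random walk loop of length $t_{\text{out}}(x,z)$ that stays inside the vacant ball $B\left(z;\left(1-\delta_{N,x}^{c_{\ref{eq:vacant}}}\right)\rad\right)$ and returns to $x_1$; by \eqref{eq:ballHK} this loop has probability at least $\exp\left\{-c\,\delta_{N,x}^{c_{\ref{prop:outsideB}}}\left(|z|_1+|x-z|_1\right)\right\}/N^{O(1)}$ on the given environment, so $\bP_{x_1}\left(\tau_\Oi>N-t_{\text{out}}\right)\le N^c\exp\left\{c\,\delta_{N,x}^{c_{\ref{prop:outsideB}}}\left(|z|_1+|x-z|_1\right)\right\}\bP_{x_1}\left(\tau_\Oi>N\right)$. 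Only \emph{after} this quenched comparison does the paper take $\E$, use translation invariance, and sum, so the Donsker--Varadhan error never enters a ratio. You should replace your DV79-based step by this path-extension argument; without it, the statement does not follow.
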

\begin{proof}
Let us fix $k,l\in[0,2N]$ satisfying
\begin{equation}
\begin{split}
 l-k &\ge N-t_{\text{out}}(x,z)\\
 &=N-\delta_{N,x}^{c_{\ref{prop:outsideB}}}(|z|_1+|x-z|_1)\rad^2,
\end{split}
\label{eq:timeoutside}
\end{equation}
and define
\begin{equation}
{G(z;k,l)=\left\{\Oi\cap\ball{z}{(1-\delta_{N,x}^{c_{\ref{eq:vacant}}})\rad}=\emptyset,
B^-(z)\subset S_{[k,l]}\subset B^+(z)\setminus\Oi\right\}}.
\end{equation}
We further introduce $x_1,x_2\in B^-(z)$ and start by rewriting
\begin{equation}
\begin{split}
&\P\otimes\bP\left(\tau_\Oi>\tau_x^N, {\tau_{B^-}=k, S_k=x_1, G(z), \tau_{B^-}^\leftarrow=l,,S_l=x_2}\right)\\
&\quad = \P\otimes\bP\Bigl(\tau_{B^-(z)}=k, {S_k=x_1}, S_{[0,k]}\cap\Oi=\emptyset, {G(z;k,l)},\\
&\hspace{120pt}\tau^{\leftarrow}_{B^-(z)}=l,{S_l=x_2},S_{[l,\tau_x^N]}\cap\Oi=\emptyset\Bigr).
\end{split}
\label{eq:zkl_fixed}
\end{equation}
We will take a sum over $k,l,x_1,x_2$ in the end. We are going to estimate the costs for the three pieces $S_{[0,{\tau_{B^+(z)}}]}$, $S_{[k,l]}$ and $S_{[{\tau^\leftarrow_{B^+(z)}},\tau_x^N]}$ to avoid $\Oi$ separately. Noting that $\tau_{B^+(z)}>\tau_{\Lball{0}{r(z)}^c}$, we have
\begin{equation}
 \bP\left({\tau_{B^-(z)}=k,S_k=x_1},S_{[0, {k}]}\cap\Oi=\emptyset\right)
\le \bP\left(\tau_\Oi>\tau_{\Lball{0}{r(z)}^c}\right)
\end{equation}
and similarly, by considering the time-reversed random walk,
\begin{equation}
\bP\left({\tau^{\leftarrow}_{B^-(z)}=l,S_l=x_2},S_{[{l}, \tau_x^N]}\cap\Oi=\emptyset\right)
\le \bP_x\left(\tau_\Oi>\tau_{\Lball{x}{r(x,z)}^c}\right).
\end{equation}
Since $\Lball{x}{r}$, $\Lball{0}{r(z)})$ and $B^+(z)$ are disjoint, the right-hand sides of the above two inequalities and
$\bP({G(z;k,l)\mid S_k=x_1,S_l=x_2})$
are independent under $\P$. Therefore it follows from~\eqref{eq:zkl_fixed} that
\begin{equation}
\begin{split}
&\P\otimes\bP\left(\tau_\Oi>\tau_x^N, {\tau_{B^-}=k, S_k=x_1, G(z), \tau_{B^-}^\leftarrow=l,,S_l=x_2}\right)\\
&\quad \le \P\otimes\bP\left(\tau_\Oi>\tau_{\Lball{0}{r(z)}^c}\right)
\P\otimes\bP\bigl({G(z;k,l)\mid S_k=x_1,S_l=x_2}\bigr)
\P\otimes\bP_x\left(\tau_\Oi>\tau_{\Lball{x}{r(x,z)}^c}\right).
\end{split}
\label{eq:separated}
\end{equation}
We begin with the first and third factors. From~\cite[(0.5)]{Szn95b1} and the union bound, it follows that for any $\epsilon\in(0,1)$,
\begin{align}
\P\otimes\bP\left(\tau_\Oi>\tau_{\Lball{0}{r(z)}^c}\right)
&\le cN^d \exp\left\{-(1-\epsilon)r(z)\right\},\label{eq:first}\\
\P\otimes\bP_x\left(\tau_\Oi>\tau_{\Lball{x}{r(x,z)}^c}\right)
&\le cN^d\exp\left\{-(1-\epsilon)r(x,z)\right\},\label{eq:last}
\end{align}
where $cN^d$ is a crude upper bound on $|\partial \Lball{0}{r(z)}|$ and $|\partial \Lball{x}{r(x,z)}|$.
For the second factor in~\eqref{eq:separated}, we use~\eqref{eq:timeoutside} and the local central limit theorem to obtain
\begin{equation}
\begin{split}
 \bP\left({G(z;k,l) \mid S_k=x_1, S_l=x_2}\right)
& \le \bP_{x_1}\left(\tau_\Oi>N-t_{\text{out}}(x,z)\cond{S_{l-k}=x_2}\right)\\
& \le N^c \bP_{x_1}\left(\tau_\Oi>N-t_{\text{out}}(x,z)\right).
\end{split}
\label{eq:mid_piece}
\end{equation}
We further add a piece of random walk loop satisfying
\begin{align}
S_0=S_{t_{\text{out}}(x,z)}=x_1, S_{[0,t_{\text{out}}(x,z)]}\subset \ball{z}{(1-\delta_{N,x}^{c_{\ref{eq:vacant}}})\rad}
\end{align}
in order to recover $\tau_\Oi>N$. Due to~\eqref{eq:vacant3}, the additional cost can be controlled by the random walk estimate~\eqref{eq:ballHK} and the right-hand side of~\eqref{eq:mid_piece} is bounded by
\begin{equation}
N^{{c}}\exp\left\{c\delta_{N,x}^{c_{\ref{prop:outsideB}}}(|z|_1+|x-z|_1)\right\}\bP_{x_1}\left(\tau_\Oi>N\right).
\end{equation}
By the translation invariance of $\P$ and the union bound over $x_1\in\ball{0}{2N}$, it follows that
\begin{equation}
\begin{split}
&\P\otimes\bP\left(G(z;k,l)\cond S_k=x_1,S_l=x_2\right)\\
&\quad\le  N^{{c}}\exp\left\{{C_1}\delta_{N,x}^{c_{\ref{prop:outsideB}}}(|z|_1+|x-z|_1)\right\}
\P\otimes\bP\left(\tau_\Oi>N\right).
\end{split}
\label{eq:middle}
\end{equation}
Substituting~\eqref{eq:first},~\eqref{eq:last} and~\eqref{eq:middle} into~\eqref{eq:separated}, we arrive at
\begin{equation}
\begin{split}
&\P\otimes\bP\left(\tau_\Oi>\tau_x^N, {\tau_{B^-}=k, S_k=x_1, G(z), \tau_{B^-}^\leftarrow=l,,S_l=x_2}\cond \tau_\Oi>N\right)\\
&\quad \le N^{{c}}\exp\left\{-(1-\epsilon)(r(z)+r(x,z))+{C_1}\delta_{N,x}^{c_{\ref{prop:outsideB}}}
(|z|_1+|x-z|_1)\right\}.
\end{split}
\label{eq:cost_sep}
\end{equation}
Summing over $k,l\le 2N$ and $x_1,x_2\in B^-(z)$, and choosing $\epsilon$ so small that ${C_1}\delta_{N,x}^{c_{\ref{prop:outsideB}}}\le \delta_{N,x}^{c_{\ref{prop:outsideB}}/2}$, we obtain~\eqref{eq:z_fixed}.
\end{proof}
Proposition~\ref{prop:z_fixed} measures not only the cost for the random walk to visit $B^-(z)$, but also the cost as $z$ varies. In the following corollary, we use it to show that if $|x|$ is close to $2\rad$, then $z$ must be near $\frac12 x$. In addition, we show that the whole random walk path is confined in a ball slightly larger than $B^+$. This will be used in the proof of~\eqref{eq:confine}.
\begin{corollary}
\label{cor:confine}
Let $h\in\R^d$ and $\be_h=h/|h|$. When $\epsilon>0$ is small depending on $d$ and $p$ and $x\in\ball{2\rad\be_h}{\epsilon\rad}$,
\begin{equation}
\begin{split}
& \mu_{N,x}\left({\bigcup_{z\in\ball{\rad\be_h}{{\epsilon^{1/4}}\rad}}}G(z) \cap \left\{ S_{[0,\tau_x^N]} \subset \ball{\rad\be_h}{(1+{\epsilon^{1/5}})\rad}\right\}\right)\\
&\quad \ge 1- \exp\left\{-\tfrac14(\log N)^2\right\}
\end{split}
\label{eq:cor_confine}
\end{equation}
for all sufficiently large $N$.
\end{corollary}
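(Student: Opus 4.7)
The plan is to deduce the corollary from Proposition~\ref{prop:z_fixed} in two stages: first restrict the vacant-ball center $z$ to a small neighborhood of $\rad\be_h$, and then upgrade $G(z)$ with the confinement of the first and last pieces. Throughout the argument, I will use that $\beta$ is a norm on $\R^d$ equivalent to the Euclidean norm, which follows from subadditivity and~\eqref{eq:shape_thm}.

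\textbf{Step 1 (localizing $z$).} By~\eqref{eq:G_good} it suffices to show that
\begin{equation}
 \mu_{N,x}\biggl(\,\bigcup_{z\in\ball{0}{2N}\setminus\ball{\rad\be_h}{\epsilon^{1/4}\rad}} G(z)\biggr) \le \exp\{-\tfrac13(\log N)^2\}.
\end{equation}
The key geometric fact is that if $z\notin\ball{\rad\be_h}{\epsilon^{1/4}\rad}$ and $|x-2\rad\be_h|\le\epsilon\rad$, then $r(z)+r(x,z)\ge c\,\epsilon^{1/2}\rad$ for some $c=c(\beta)>0$. Indeed, since $B^+(z)$ is a Euclidean ball of radius $(1+o(1))\rad$ and $\beta$ is equivalent to the Euclidean norm, $r(z)\ge c\,(|z|-\rad)_+$ and similarly $r(x,z)\ge c\,(|x-z|-\rad)_+$ whenever $x\notin\Lball{0}{r(z)}$ (the remaining case is even better, giving $r(z)\ge c\,|x|\gtrsim\rad$). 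The parallelogram identity
\begin{equation}
|z|^2+|x-z|^2 = 2\,|z-\tfrac{x}{2}|^2+\tfrac12|x|^2
\end{equation}
combined with $|z-\tfrac{x}{2}|\ge |z-\rad\be_h|-|\tfrac{x}{2}-\rad\be_h|\ge\tfrac12\epsilon^{1/4}\rad$ yields $|z|+|x-z|\ge|x|+c'\epsilon^{1/2}\rad$, which gives the claim. Now insert this lower bound into Proposition~\ref{prop:z_fixed}: since $\delta_{N,x}^{c_{\ref{prop:outsideB}}/2}(|z|_1+|x-z|_1)$ is of order $o(\rad)$ on the support of $G(z)$ (when $|z|+|x-z|\gtrsim \rad$; for large $|z|$ the Lyapunov cost dominates trivially) and $(\log N)^2\ll\epsilon^{1/2}\rad$, we obtain $\P\otimes\bP(G(z),\tau_\Oi>\tau_x^N\mid\tau_\Oi>N)\le e^{-c''\epsilon^{1/2}\rad}$. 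Summing over the at most $(4N)^d$ admissible $z$ absorbs the polynomial prefactor and yields the desired bound.

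\textbf{Step 2 (confinement).} On $G(z)$ with $z\in\ball{\rad\be_h}{\epsilon^{1/4}\rad}$, the middle piece $S_{[\tau_{B^-(z)},\tau_{B^-(z)}^\leftarrow]}\subset B^+(z)\subset\ball{\rad\be_h}{(1+\epsilon^{1/4}+\delta_{N,x}^{c_{\ref{lem:slow_cross}}/2}(\log N)^3)\rad}$, which is contained in $\ball{\rad\be_h}{(1+\epsilon^{1/5})\rad}$ for small $\epsilon$ and large $N$ since $\epsilon^{1/4}\ll\epsilon^{1/5}$. So the whole path leaves the confinement ball only if the first or last piece does. Consider the first piece and fix $y\notin\ball{\rad\be_h}{(1+\epsilon^{1/5})\rad}$. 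If $S_{[0,\tau_{B^-(z)}]}$ visits $y$, then it exits $\Lball{0}{\beta(y)-\eta}$ for every $\eta>0$; repeating the derivation of~\eqref{eq:first} with $r(z)$ replaced by $\beta(y)$ and applying the Markov property at $\tau_y$ enhances~\eqref{eq:separated} by the extra factor $e^{-(1-\epsilon)(\beta(y)-r(z))}$. Since $|y|\ge|y-\rad\be_h|-\rad\ge\epsilon^{1/5}\rad$, norm equivalence gives $\beta(y)-r(z)\ge c\,\epsilon^{1/5}\rad-C\epsilon^{1/4}\rad\ge c'\epsilon^{1/5}\rad$ for small $\epsilon$. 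Proceeding as in Proposition~\ref{prop:z_fixed} and summing over $y,z$ kills the polynomial factors. The last piece is handled symmetrically by reversing time and using $\beta(x-y)$ in place of $\beta(y)$.

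\textbf{Main obstacle.} The delicate point is Step~2: adapting Proposition~\ref{prop:z_fixed} to force the first and last pieces into the enlarged ball. The gap between $r(z)$ (which may be of order $\epsilon^{1/4}\rad$) and $\beta(y)$ (which is at least $c\epsilon^{1/5}\rad$) is small, so the precise choice of the exponents $\epsilon^{1/4}$ and $\epsilon^{1/5}$ in the statement is essential to have $\epsilon^{1/5}\gg\epsilon^{1/4}$ as $\epsilon\downarrow0$, i.e.\ $\epsilon^{-1/20}\to\infty$. One must also verify that the independence/disjointness of Lyapunov balls used in~\eqref{eq:separated} is preserved when the first Lyapunov ball is enlarged to capture $y$, which is automatic because $y$ is still well-separated from $x$ when $|x|\approx 2\rad$ and from $B^+(z)$ by construction.
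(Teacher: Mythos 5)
Your overall strategy matches the paper's: first localize the vacant-ball center $z$ to $\ball{\rad\be_h}{\epsilon^{1/4}\rad}$ using Proposition~\ref{prop:z_fixed}, then argue that the first and last pieces of the path cannot escape $\ball{\rad\be_h}{(1+\epsilon^{1/5})\rad}$. However, both steps contain gaps.

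\textbf{Gap in Step 1.} The claimed implication ``$|z-x/2|\ge\tfrac12\epsilon^{1/4}\rad$ and the parallelogram identity yield $|z|+|x-z|\ge|x|+c'\epsilon^{1/2}\rad$'' is false. Take $z$ on the segment from $0$ to $x$ but far from its midpoint, say $z=\epsilon^{1/3}x$: then $|z-x/2|\approx|x|/2\approx\rad\gg\epsilon^{1/4}\rad$, yet $|z|+|x-z|=|x|$ exactly, with no positive gap. The parallelogram identity only controls $|z|^2+|x-z|^2$, and $(|z|+|x-z|)^2 \ge |z|^2+|x-z|^2$ is far too weak to recover a nontrivial improvement over the triangle inequality. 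The conclusion $r(z)+r(x,z)\ge c\epsilon^{1/2}\rad$ is nonetheless correct, but the argument needs a case distinction: if either $|z|$ or $|x-z|$ exceeds $(1+\epsilon^{1/2})\rad$ the bound follows directly from $r(z)\ge c(|z|-(1+o(1))\rad)_+$ and its analogue; otherwise one has the additional constraint $|z|^2+|x-z|^2\le(|z|+|x-z|)(1+\epsilon^{1/2})(1+o(1))\rad$, which together with the parallelogram lower bound forces $|z|+|x-z|-2\rad\ge c\epsilon^{1/2}\rad$.

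\textbf{Gap in Step 2 (the serious one).} Your plan to ``repeat the derivation of~\eqref{eq:first} with $r(z)$ replaced by $\beta(y)$'' breaks the disjointness that makes~\eqref{eq:separated} valid. The product decomposition in Proposition~\ref{prop:z_fixed} works because $\Lball{0}{r(z)}$ is by construction disjoint from $B^+(z)$ — its radius is exactly ${\rm dist}_\beta(0,B^+(z))$. If $y\notin\ball{\rad\be_h}{(1+\epsilon^{1/5})\rad}$ is far from the origin, then $\beta(y)>r(z)$, so $\Lball{0}{\beta(y)}$ (which is still centered at $0$) strictly contains $\Lball{0}{r(z)}$ and therefore \emph{overlaps} $B^+(z)$. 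The obstacle configurations determining the first-piece and middle-piece costs are no longer independent under $\P$. Your last-paragraph remark that ``$y$ is still well-separated $\dots$ from $B^+(z)$'' conflates the location of the point $y$ with the footprint of the ball $\Lball{0}{\beta(y)}$ — the latter is what needs to avoid $B^+(z)$, and it does not. The paper sidesteps this by applying the strong Markov property at the first exit time $\sigma$ of $\ball{\rad\be_h}{(1+\epsilon^{1/5})\rad}$, bounding the pre-$\sigma$ probability trivially by $1$, and then re-running the Proposition~\ref{prop:z_fixed} scheme starting from $w=S_\sigma$. Because the relevant Lyapunov ball is now $\Lball{w}{{\rm dist}_\beta(w,B^+(z))}$, centered at the exit point rather than at the origin, it is disjoint from $B^+(z)$ by construction, and the Lyapunov distance ${\rm dist}_\beta\left(\ball{\rad\be_h}{(1+\epsilon^{1/5})\rad}^c, B^+(z)\right)\ge c(\epsilon^{1/5}-\epsilon^{1/4}-o(1))\rad\ge c'\epsilon^{1/5}\rad$ furnishes the required cost.

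Minor points: you should state explicitly that Proposition~\ref{prop:z_fixed} controls $\P\otimes\bP(\tau_\Oi>\tau_x^N,G(z)\mid\tau_\Oi>N)$, which then needs to be divided by the lower bound $\P\otimes\bP(\tau_\Oi>\tau_x^N\mid\tau_\Oi>N)\ge\exp\{-c\epsilon\rad\}$ from Proposition~\ref{prop:LDPlower} to produce $\mu_{N,x}(G(z))$; this is where the separation $\epsilon\ll\epsilon^{1/2}$ (and later $\epsilon^{1/4}\ll\epsilon^{1/5}$) is actually used.
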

\begin{proof}
Note first that when $x\in B({2\rad\be_h};{\epsilon\rad})$, Proposition~\ref{prop:LDPlower} yields
\begin{equation}
\P\otimes\bP\left(\tau_{\Oi}>\tau_x^N \cond \tau_\Oi>N\right)
\ge \exp\left\{-c\epsilon\rad\right\}.
\label{eq:pf_|x|=2rad}
\end{equation}

Let us prove that we can discard $G(z)$ with $z$ not close to $\rad\be_h$.
For any $x\in\ball{2\rad\be_h}{\epsilon\rad}$ and $z\not\in B({\rad\be_h};{{\epsilon^{1/4}}\rad})$, we have
\begin{equation}
 r(z)+r(x,z) \ge c\epsilon^{1/2}\rad.
\end{equation}
Substituting this into~\eqref{eq:z_fixed} and comparing with~\eqref{eq:pf_|x|=2rad}, we find that for any $z\not\in B({\rad\be_h};{{\epsilon^{1/4}}\rad})$,
\begin{equation}
\mu_{N,x}\left(G(z)\right)
\le \exp\left\{-c{\epsilon}^{1/2}\rad\right\}.
\end{equation}

Next we prove the confinement part. By Proposition~\ref{prop:confine} and what we have just proved, we may assume that
\begin{equation}
G(z)\text{ holds for some }z\in \ball{\rad\be_h}{{\epsilon^{1/4}}\rad}\text{ and } S_{[\tau_{B^-(z)},\tau_{B^-(z)}^{\leftarrow}]}\subset B^+(z).
\end{equation}
Therefore, the random walk can exit $B({\rad\be_h};{(1+{\epsilon^{1/5}})\rad})$ only during the time interval $[0,\tau_{B^-(z)}]$ or $[\tau_{B^-(z)}^{\leftarrow},\tau_x^N]$. We first consider the former case. In this case, we use the strong Markov property at the first exit time from $B({\rad\be_h};{(1+\epsilon^{1/5})\rad})$. Starting from the exit time, we repeat the proof of Proposition~\ref{prop:z_fixed}. Then the cost for the first piece of the random walk becomes the Lyapunov distance between $B({\rad\be_h};{(1+\epsilon^{1/5})\rad})^c$ and $B^+(z)$, which is larger than $c\epsilon^{1/5}\rad$. Thus it follows for any $z\in B({\rad\be_h};{{\epsilon^{1/4}}\rad})$ that
\begin{equation}
\begin{split}
& \P\otimes\bP\left(\tau_{\Oi}>\tau_x^N, G(z), S_{[0,\tau_{B^-}]}\not \subset \ball{\rad\be_h}{(1+{\epsilon^{1/5}})\rad}\cond \tau_\Oi>N\right)\\
& \quad \le \exp\left\{-c{\epsilon^{1/5}}\rad\right\}
\end{split}
\end{equation}
and comparing with~\eqref{eq:pf_|x|=2rad}, we conclude that
\begin{equation}
 \mu_{N,x}\left(G(z), S_{[0,\tau_{B^-}]}\not \subset \ball{\rad\be_h}{(1+{\epsilon^{1/5}})\rad}\right)
\le \exp\left\{-c{\epsilon^{1/5}}\rad\right\}.
\end{equation}
By the same argument, we get the same bound for the probability that the random walk exits from $B({\rad\be_h};{(1+{\epsilon^{1/5}})\rad})$ during the time interval $[\tau_{B^-(z)}^{\leftarrow},\tau_x^N]$ and we are done.
\end{proof}
\begin{remark}
\label{rem:unbiased1}
As long as $|x|\le (2+\epsilon)\rad$, we can follow the same argument as above to prove that the random walk path $S_{[0,N]}$ is confined in some $B(z;{(1+{\epsilon^{1/5}})\rad})$ which contains both 0 and $x$, and also $S_{[0,N]}$ covers a slightly smaller ball with the same center.
\end{remark}

\section{Proof of the upper bound in Theorem~\ref{thm:LDP}}
\label{sec:LDPupper}
In this section, we prove the following proposition, which in particular implies the upper bound in Theorem~\ref{thm:LDP}. Combined with the lower bound proved in Section~\ref{sec:LDPlower}, it completes the proof of Theorem~\ref{thm:LDP}.
\begin{proposition}
\label{prop:LDPupper}
 There exists $c_{\ref{prop:LDPupper}}>0$ such that when $\epsilon>0$ is small depending on $d$ and $p$ and $(2+\epsilon)\rad\le |x|\le \epsilon\rad^d$,
 \begin{equation}
 \P\otimes\bP\left(S_N=x \cond \tau_\Oi>N \right)
 \le \exp\left\{-(1-\epsilon^{c_{\ref{prop:LDPupper}}}){\rm dist}_\beta(x,\ball{0}{2\rad})\right\}
 \label{eq:LDPupper}
 \end{equation}
for all sufficiently large $N$.
\end{proposition}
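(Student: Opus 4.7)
The strategy is already prefigured in the section outline: reduce to the event $\{\tau_\Oi>\tau_x^N\}$, decompose according to which event $G(z)$ from~\eqref{eq:vacant3}--\eqref{eq:timeoutside1} occurs, apply Proposition~\ref{prop:z_fixed}, and then compare $r(z)+r(x,z)$ with ${\rm dist}_\beta(x,\ball{0}{2\rad})$ via a triangle-inequality argument.

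Concretely, I would first note that $\{S_N=x,\tau_\Oi>N\}\subset\{\tau_\Oi>\tau_x^N\}$, so it suffices to bound $\mu_N(\tau_\Oi>\tau_x^N)$. Splitting on $\bigcup_z G(z)$ and using the identity $\mu_N(\tau_\Oi>\tau_x^N,\,\bigcap_z G(z)^c)=\mu_{N,x}(\bigcap_z G(z)^c)\cdot\mu_N(\tau_\Oi>\tau_x^N)$ together with~\eqref{eq:G_good}, the residual event can be absorbed into the left-hand side, giving $\mu_N(\tau_\Oi>\tau_x^N)\le 2\sum_{z\in\ball{0}{2N}}\mu_N(\tau_\Oi>\tau_x^N,G(z))$ for large $N$. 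Proposition~\ref{prop:z_fixed} then bounds each summand by $\exp\{-(1-\epsilon)(r(z)+r(x,z))+\delta_{N,x}^{c_{\ref{prop:outsideB}}/2}(|z|_1+|x-z|_1)+(\log N)^2\}$.

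The heart of the argument is a geometric comparison: for any relevant $z$,
\begin{equation*}
r(z)+r(x,z)\ge {\rm dist}_\beta(x,\ball{0}{2\rad})-C\delta_{N,x}^{c_{\ref{lem:slow_cross}}/2}(\log N)^3\rad.
\end{equation*}
I would prove this by cases according to where the infimum in $r(x,z)={\rm dist}_\beta(x,B^+(z)\cup\Lball{0}{r(z)})$ is attained. If the attainer is some $b\in B^+(z)$, let $b'\in\ball{0}{2\rad}$ be its closest Euclidean projection; because we may restrict attention to $z\in\ball{0}{\rad}$ (as Proposition~\ref{prop:vacant} places $\cent$ there, and $r(z)$ grows at least linearly once $|z|$ exceeds $\rad$, killing contributions from farther $z$), the enlargement gap forces $|b-b'|\le\delta_{N,x}^{c_{\ref{lem:slow_cross}}/2}(\log N)^3\rad$, and the $\beta$-Euclidean comparability gives ${\rm dist}_\beta(x,\ball{0}{2\rad})\le\beta(x-b')\le r(x,z)+C\delta\rad$. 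If the attainer is in $\Lball{0}{r(z)}$, then the norm identity ${\rm dist}_\beta(x,\Lball{0}{r})=(\beta(x)-r)_+$ gives $r(z)+r(x,z)\ge\beta(x)\ge{\rm dist}_\beta(x,\ball{0}{2\rad})$ directly.

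Plugging this into the exponential bound and using the hypothesis $(2+\epsilon)\rad\le|x|\le\epsilon\rad^d$ to estimate $\delta_{N,x}\le 2\epsilon$ for large $N$ and ${\rm dist}_\beta(x,\ball{0}{2\rad})\ge c\epsilon(|x|\vee\rad)$, each of the three error contributions --- $(1-\epsilon)C\delta_{N,x}^{c_{\ref{lem:slow_cross}}/2}(\log N)^3\rad$, $\delta_{N,x}^{c_{\ref{prop:outsideB}}/2}(|z|_1+|x-z|_1)$, and $(\log N)^2$ together with the polynomial $N^{O(d)}$ from summing over $z$ --- is bounded by $\epsilon^{c}{\rm dist}_\beta(x,\ball{0}{2\rad})$ for $N$ sufficiently large, for a suitable $c=c_{\ref{prop:LDPupper}}>0$ (e.g.\ $c_{\ref{prop:outsideB}}/4$).

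The main obstacle is the geometric lemma, particularly verifying that the enlargement from $B^-$ to $B^+$ (the $(\log N)^3$ blow-up factor built into~\eqref{eq:def_B+}) does not degrade the rate. This requires that the restriction to $z\in\ball{0}{\rad}$ be carefully justified and that the projection onto $\ball{0}{2\rad}$ in Case~1 genuinely produces an error of order $\delta_{N,x}^{c}(\log N)^3\rad$, which is small relative to ${\rm dist}_\beta(x,\ball{0}{2\rad})\ge c\epsilon\rad$ only once $N$ is large enough to make $\delta_{N,x}^{c}(\log N)^3\ll\epsilon^{1+c_{\ref{prop:LDPupper}}}$ uniformly in $x$. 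Handling the boundary regime $|x|\sim(2+\epsilon)\rad$ (where ${\rm dist}_\beta$ is only $O(\epsilon\rad)$) is the delicate part.
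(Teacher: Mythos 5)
Your proposal follows the same skeleton as the paper's proof: reduce $\{S_N=x,\tau_\Oi>N\}$ to $\{\tau_\Oi>\tau_x^N\}$, decompose on $\bigcup_z G(z)$ via~\eqref{eq:G_good}, apply Proposition~\ref{prop:z_fixed}, and compare $r(z)+r(x,z)$ with ${\rm dist}_\beta(x,\ball{0}{2\rad})$. The difference is in how the geometric comparison is proved, and two of your supporting claims there are wrong even though the overall plan is salvageable.

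Your justification for restricting to $z\in\ball{0}{\rad}$ is half incorrect. Tracing Proposition~\ref{prop:vacant} (which places $\cent\in\ball{0}{\rad}$) through the derivation of~\eqref{eq:G_good} does legitimately allow the union there to be taken over $z\in\ball{0}{\rad}$, so you may restrict on that basis. But the additional claim that ``$r(z)$ grows at least linearly once $|z|>\rad$, killing contributions from farther $z$'' is false: for $z$ roughly collinear with $0$ and $x$ with $\rad\ll|z|\ll|x|$, one has $r(z)\approx\beta(z)$ and $r(x,z)\approx\beta(x-z)-O(\rad)$, hence $r(z)+r(x,z)\approx\beta(x)-O(\rad)\approx{\rm dist}_\beta(x,\ball{0}{2\rad})$, i.e.\ the same exponential order as the optimal $z$, not smaller. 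The paper's argument sidesteps this entirely: it writes $r(z)=\beta(u)$ with $u$ near $B^+(z)$, notes the attainer $b$ of $r(x,z)$ satisfies $b=u+v$ with $|v|$ bounded by the diameter of $B^+(z)$, and applies the $\beta$-triangle inequality $\beta(u)+\beta(x-u-v)\ge\beta(x-v)$, which works uniformly for all $z\in\ball{0}{2N}$ with no restriction on $|z|$. Your Euclidean-projection Case~1 genuinely needs $B^+(z)\subset\ball{0}{(2+o(1))\rad}$, so it must rely on the valid half of the justification only.

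Second, the uniformity condition in your last paragraph, that $\delta_{N,x}^{c}(\log N)^3\ll\epsilon^{1+c_{\ref{prop:LDPupper}}}$ uniformly in $x$, is unattainable: when $|x|\approx\epsilon\rad^d$ one has $\delta_{N,x}\approx\epsilon$, so the left side diverges with $N$. What actually closes the argument is your own earlier and correct estimate ${\rm dist}_\beta(x,\ball{0}{2\rad})\ge c\epsilon(|x|\vee\rad)$: for $|x|$ near $\epsilon\rad^d$ the rate is of order $\epsilon\rad^d\gg(\log N)^3\rad$, so the absolute error $\delta_{N,x}^{c}(\log N)^3\rad$ is negligible even though the factor $\delta_{N,x}^c(\log N)^3$ itself is not small. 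The paper makes this explicit by splitting into $|x|\le\rad^{d-1/2}$ (where $\delta_{N,x}=\rad^{-1/5}\to0$) and $|x|>\rad^{d-1/2}$ (where the diameter of $B^+(z)$ is negligible relative to $r(z)+r(x,z)$; see~\eqref{eq:r+r>dist2}). You should either adopt that case split or consistently compare the error against $|x|\vee\rad$ rather than the crude lower bound $c\epsilon\rad$.
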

\begin{proof}
By the fact $\{S_N=x,\tau_\Oi>N\}\subset \{\tau_\Oi >\tau_x^N\}$ and~\eqref{eq:G_good}, we have
\begin{equation}
 \P\otimes\bP\left(S_N=x,\tau_\Oi>N \right)
\le (1+o(1))\P\otimes\bP\left(\tau_\Oi>\tau_x^N, {\bigcup_{z\in B(0;2N)}G(z)}\right)
\end{equation}
as $N\to \infty$. Therefore, it suffices to bound $\P\otimes\bP(\tau_\Oi>\tau_x^N, \bigcup_{z\in B(0;2N)}G(z) \mid \tau_\Oi>N)$ by the right-hand side of~\eqref{eq:LDPupper}. We can bound this probability by using Proposition~\ref{prop:z_fixed} and the union bound as follows:
\begin{equation}
\begin{split}
&\P\otimes\bP\left(\tau_\Oi>\tau_x^N, {\bigcup_{z\in B(0;2N)}G(z)} \cond \tau_\Oi>N \right)\\
&\quad \le \sum_{z\in B(0;2N)}\P\otimes\bP\left(\tau_\Oi>\tau_x^N, G(z)\cond \tau_\Oi>N \right)\\
&\quad \le cN^d \exp\left\{-\inf_{z\in\ball{0}{2N}}\left[(1-\epsilon)(r(z)+r(x,z))-\delta_{N,x}^{c_{\ref{prop:outsideB}}/2}(|z|_1+|x-z|_1)+(\log N)^2\right]\right\}.
\end{split}
\label{eq:LDP+error}
\end{equation}

Let us first consider the case $|x|\le \rad^{d-1/2}$ and prove that for any $z\in \ball{0}{2N}$,
\begin{equation}
 r(z)+r(x,z) \ge {\rm dist}_\beta(x,\ball{0}{2\rad})+o(\rad)
\label{eq:r+r>dist}
\end{equation}
as $N\to\infty$. We may assume that $r(x,z)={\rm dist}_\beta(x,B^+(z))$, that is, we are in the situation of the left picture in Figure~\ref{fig:touching}. Otherwise, we can decrease the left-hand side of~\eqref{eq:r+r>dist} by moving $z$ to the point where $\Lball{0}{r(z)}$ touches $\Lball{x}{r(x,z)}$. Then for any $r>0$, we have
\begin{equation}
\begin{split}
&\min\left\{r(z)+{\rm dist}_\beta(x,\ball{z}{\rad})\colon z\in\ball{0}{2N}, r(z)=r\right\}\\
&\quad \ge \min\left\{\beta(u)+\beta(x-(u+v))\colon u\in \partial\Lball{0}{r}, v\in\ball{0}{2(1+\delta_{N,x}^{c_{\ref{lem:slow_cross}}{/2}}{(\log N)^3}))\rad}\right\}
\end{split}
\end{equation}
by choosing $u$ and $u+v$ so that $\beta(u)={\rm dist}_\beta(0,B^+(z))$ and $\beta(x-(u+v))={\rm dist}_\beta(x,B^+(z))$, respectively. Since $\beta(\cdot)$ is a norm, the above is further bounded from below by
\begin{equation}
\min\left\{\beta(x-v)\colon v\in\ball{0}{2(1+\delta_{N,x}^{c_{\ref{lem:slow_cross}}{/2}}{(\log N)^3})\rad}\right\}
 \ge {\rm dist}_\beta(x,\ball{0}{2\rad})+o(\rad)
\end{equation}
as $N\to\infty$, in the case $|x|\le \rad^{d-1/2}$. Since $r>0$ was arbitrary, this proves~\eqref{eq:r+r>dist}.

Next, by the assumption $|x|\ge (2+\epsilon)\rad$ and~\eqref{eq:r+r>dist}, we have $r(x)+r(x,z)\ge c\epsilon|x|$ and hence for $|z|\le 2|x|$,
\begin{equation}
|z|_1+|x-z|_1 \le c\epsilon^{-1} (r(z)+r(x,z)).
\label{eq:r>||}
\end{equation}
This bound remains valid for $|z|> 2|x|$ since $r(z)\ge c|z|$ and $|z|_1+|x-z|_1\le 3|z|$ in this case. Substituting~\eqref{eq:r>||} and~\eqref{eq:r+r>dist} into~\eqref{eq:LDP+error}, we obtain the desired bound since in the case $|x|\le \rad^{d-1/2}$, we have $\lim_{N\to\infty}\delta_{N,x}=0$.

In the other case $|x|>\rad^{d-1/2}$, it is easily seen that the size of $B^+(z)$ is negligible compared with $r(x)+r(x,z)$. Then it follows that
\begin{equation}
\begin{split}
r(z)+r(x,z) &=(\beta(z)+\beta(x-z))(1+o(1))\\
&= {\rm dist}_\beta(x,\ball{0}{2\rad})(1+o(1))
\end{split}
\label{eq:r+r>dist2}
\end{equation}
and that
\begin{equation}
 |z|+|x-z|\le c\left(r(z)+r(x,z)\right)
\end{equation}
as $N\to\infty$. Using this bound instead of~\eqref{eq:r+r>dist}, we can complete the proof as before.
\end{proof}
\begin{remark}
\label{rem:unbiased2}
In the case $h=0$, Theorem~\ref{thm:LDP} shows that $\mu_N(|S_N|\le (2+\epsilon)\rad)\to 1$ as $N\to\infty$. Combining this with Remark~\ref{rem:unbiased1}, we get a proof of~\eqref{eq:SBP}.
\end{remark}

\section{Proof of Theorem~\ref{thm:confine}}
\label{sec:thm_drift}
In this section we prove Theorem~\ref{thm:confine}.

\begin{proof}[Proof of Theorem~\ref{thm:confine}]
Let us start by proving~\eqref{eq:LLN}. We can deduce it from Proposition~\ref{prop:LDPupper} and the large deviation results in~\cite{Szn95b1,Szn95b2} by a standard exponential tilting argument~\cite[Theorem~II.7.2]{Ell85}. But we provide a more direct argument which elucidates the role of the assumption $\beta^*(h)<1$.

Let us recall that by~\cite[Theorem~2.2]{Szn95b2},
\begin{equation}
\lim_{N\to\infty} \mu_N^h\left(|S_N|\le \epsilon\rad^d\right)= 1.
\label{eq:Szn95b2}
\end{equation}
Choosing $x=2\rad\be_h$ in~\eqref{eq:LDPlower}, we find the following lower bound on the partition function of $\mu^h_N$:
\begin{equation}
\begin{split}
\E\otimes\bE\left[e^{ \langle h, S_N \rangle}\colon \tau_\Oi>N\right]
&\ge \E\otimes\bE\left[e^{\langle h, S_N \rangle}\colon \tau_\Oi>N, S_N=2\rad\be_h\right]\\
&\ge e^{(2|h|-c_{\text{\upshape\ref{prop:LDPlower}}}\epsilon)\rad}\P\otimes \bP(\tau_\Oi>N).
\end{split}
\label{eq:pf_drift}
\end{equation}
On the other hand, we have
\begin{equation}
\E\otimes\bE\left[e^{ \langle h, S_N \rangle}\colon \tau_\Oi>N, S_N=x\right]
\le e^{(2|h|-c\epsilon^{1/2})\rad}\P\otimes\bP\left(\tau_\Oi>N\right)
\label{eq:bias_x_fixed}
\end{equation}
for all sufficiently small $\epsilon>0$ and $x\in\ball{0}{2N}$ satisfying either of the following conditions:
\begin{enumerate}
 \item $\langle h,x \rangle < (2|h|-\epsilon^{1/2})\rad$, in which case we simply drop the constraints $S_N=x$ and
 \item $|x|>(2+\epsilon^{1/2})\rad$, in which case by Proposition~\ref{prop:LDPupper}, the subcriticality assumption $\beta^*(h)<1$, and taking $y\in\ball{0}{2\rad}$ such that $\beta(x-y)={\rm dist}_\beta(x,\ball{0}{2\rad})$,
\begin{equation}
\begin{split}
\langle h,x \rangle-{\rm dist}_\beta(x,\ball{0}{2\rad})
&\le 2\rad |h|+\langle h,x-y \rangle-\beta(x-y)\\
&\le 2\rad |h|+\beta(x-y) (\beta^*(h)-1).
\end{split}
\end{equation}
\end{enumerate}
Comparing~\eqref{eq:bias_x_fixed} with~\eqref{eq:pf_drift} and summing over $x\in\ball{0}{2N}$, we obtain
\begin{equation}
\lim_{N\to\infty} \mu_N^h\left(\langle h,S_N \rangle \ge (2|h|-\epsilon^{1/2})\rad \text{ and } |S_N|\le (2+\epsilon^{1/2})\rad\right)
=1
\label{eq:arc}
\end{equation}
for sufficiently small $\epsilon>0$. Since
\begin{equation}
\left\{x\colon\langle h,x \rangle \ge (2{|h|}-\epsilon^{1/2})\rad\right\}\cap \ball{0}{(2+\epsilon^{1/2})\rad} \subset B(2\rad\be_h;c\epsilon^{1/4}\rad)
\end{equation}
for small $\epsilon$ (see Figure~\ref{fig:Tangent}), the proof of~\eqref{eq:LLN} is completed.
\begin{figure}
 \centering
 \includegraphics{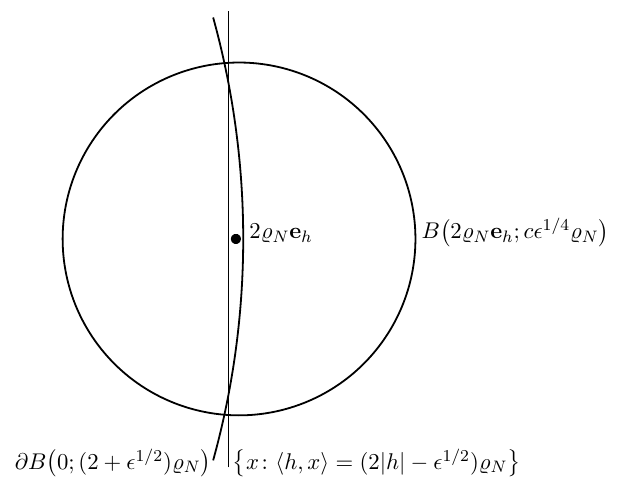}
\caption{The balls and hyperplane appearing in the proof of~\eqref{eq:LLN}.}
\label{fig:Tangent}
\end{figure}

Next we turn to the proof of~\eqref{eq:confine}.
Since we have already proved~\eqref{eq:LLN}, we have
\begin{equation}
\begin{split}
\mu_N^h(A) = \sum_{x\in\ball{2\rad \be_h}{\epsilon\rad}} \mu_N^h\left(S_N=x,A\right)+o(1)
\end{split}
\label{eq:endpoint_wise}
\end{equation}
for any event $A$ as $N\to\infty$. Let us now introduce the \emph{pinned} measure
\begin{equation}
 \tilde{\mu}_{N,x}(\cdot)=\P\otimes\bP(\cdot\mid S_N=x, \tau_\Oi>N).
\label{eq:pinneddef}
\end{equation}
We assume the following lemma for the moment.
\begin{lemma}
 \label{lem:pinned}
There exists $c_{\ref{lem:pinned}}>0$ such that for any $|x|\le 3\rad$ and any event $A$,
\begin{equation}
\tilde{\mu}_{N,x}(A) \le N^{c_{\ref{lem:pinned}}}\mu_{N,x}(A)
\label{eq:pinned}
\end{equation}
for all sufficiently large $N$.
\end{lemma}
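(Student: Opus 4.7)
The plan is to reduce Lemma~\ref{lem:pinned} to a one-sided comparison of the two partition functions. Since $\{S_N=x,\tau_\Oi>N\}\subset\{\tau_\Oi>\tau_x^N\}$, for any event $A$,
\begin{equation*}
\tilde{\mu}_{N,x}(A)\le\mu_{N,x}(A)\cdot\frac{\P\otimes\bP(\tau_\Oi>\tau_x^N)}{\P\otimes\bP(S_N=x,\tau_\Oi>N)},
\end{equation*}
so it suffices to prove the polynomial bound $\P\otimes\bP(\tau_\Oi>\tau_x^N)\le N^{c_{\ref{lem:pinned}}}\P\otimes\bP(S_N=x,\tau_\Oi>N)$. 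A union bound over the value of $\tau_x^N\ge N$, together with the Dirichlet spectral expansion of the killed transition kernel on $\mathcal V=B(0;2N)\setminus\Oi$, gives
\begin{equation*}
\P\otimes\bP(\tau_\Oi>\tau_x^N)\le\sum_{k\ge N}\E\bigl[p_k^{\mathcal V}(0,x)\bigr],\qquad \sum_{k\ge N}p_k^{\mathcal V}(0,x)=\sum_j\frac{(1-\lambda_j)^N}{\lambda_j}\phi_j(0)\phi_j(x),
\end{equation*}
where $(\lambda_j,\phi_j)_j$ are the eigenpairs of $-\tfrac{1}{2d}\Delta$ on $\mathcal V$ with Dirichlet boundary conditions.

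On the good event $G$ of Proposition~\ref{prop:vacant} (existence of a vacant ball $B^-$ of radius $(1-o(1))\rad$), I would argue that the principal term dominates. Corollary~\ref{cor:ev_crude} and Lemma~\ref{omegaep} already give $\lambda_1\sim\rad^{-2}$. To obtain a spectral gap $\lambda_2-\lambda_1\gtrsim\rad^{-2}$, I would adapt the proof of Lemma~\ref{omegaep} to the second eigenfunction, showing that it is also essentially supported on $B^-$, where the Dirichlet Laplacian on the ball has a gap of that order. Since $\lambda\mapsto(1-\lambda)^N/\lambda$ is decreasing, the $j\ge 2$ contribution is controlled by $(1-\lambda_2)^N/\lambda_2$ times $\sum_{j\ge 2}|\phi_j(0)\phi_j(x)|\le 1$ (Parseval), which is smaller than the principal term by a factor $e^{-N(\lambda_2-\lambda_1)}\le e^{-c\rad^d}$, and this beats the polynomial lower bound $\phi_1(0)\phi_1(x)\ge c\rad^{-d}$ coming from the eigenfunction being non-degenerate near $0$ and $x$ (for $|x|\le 3\rad$, $x$ lies at Lyapunov distance $O(\rad)$ from $B^-$, and elliptic decay still yields a polynomial lower bound). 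Consequently
\begin{equation*}
\sum_{k\ge N}p_k^{\mathcal V}(0,x)\le C\lambda_1^{-1}(1-\lambda_1)^N\phi_1(0)\phi_1(x)\le C\rad^2\, p_N^{\mathcal V}(0,x)
\end{equation*}
pointwise on $G$, and integrating in $\Oi$ gives $\P\otimes\bP(\tau_\Oi>\tau_x^N,G)\le C\rad^2\,\P\otimes\bP(S_N=x,\tau_\Oi>N)$.

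For the complementary event, Proposition~\ref{prop:vacant} gives $\mu_{N,x}(G^c)\le e^{-(\log N)^2}$, i.e.\ $\P\otimes\bP(\tau_\Oi>\tau_x^N,G^c)\le e^{-(\log N)^2}\P\otimes\bP(\tau_\Oi>\tau_x^N)$, and rearrangement yields $\P\otimes\bP(\tau_\Oi>\tau_x^N)\le 2\,\P\otimes\bP(\tau_\Oi>\tau_x^N,G)$ for large $N$. Combining with the good-event estimate and $\rad^2=\varrho_1^2 N^{2/(d+2)}$ yields the lemma with any $c_{\ref{lem:pinned}}\ge 2/(d+2)$. The main obstacle is the spectral gap: Corollary~\ref{cor:ev_crude} and the arguments of Section~\ref{sec:vacant} are built to extract the principal eigenvalue only, and pushing through a matching lower bound on $\lambda_2-\lambda_1$ on $G$ requires a localization statement for the second Dirichlet eigenfunction on $\mathcal V$ analogous to Lemma~\ref{omegaep}, possibly via level-set analysis of a function orthogonal to $\phi_1$ or via a Cheeger-type inequality exploiting the almost-ball geometry guaranteed by Corollary~\ref{cor:symmdiff}.
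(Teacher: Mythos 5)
Your reduction to the partition-function inequality $\P\otimes\bP(\tau_\Oi>\tau_x^N)\le N^{c}\,\P\otimes\bP(S_N=x,\tau_\Oi>N)$ is exactly the paper's first step. From there, however, you take a genuinely different and, as you yourself note, incomplete route: you want a spectral decomposition of the killed kernel together with a spectral gap $\lambda_2-\lambda_1\gtrsim\rad^{-2}$ on the good event, so that the principal eigenpair dominates both $\sum_{k\ge N}p_k^{\mathcal V}(0,x)$ and $p_N^{\mathcal V}(0,x)$ and the ratio is $\lambda_1^{-1}\sim\rad^2$. This spectral gap is the real obstruction. Nothing in Section~5 controls $\lambda_2$: Corollary~\ref{cor:ev_crude} and Lemma~\ref{omegaep} use the level sets of the principal eigenfunction $f$ and would need a separate, nontrivial localization statement for the first excited state (or a Cheeger-type lower bound using Corollary~\ref{cor:symmdiff}) to get anything. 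Establishing $\lambda_2-\lambda_1\gtrsim\rad^{-2}$ under $\mu_{N,x}$ is plausibly as hard as the entire lemma. Two smaller but real issues in your sketch: the discrete-time walk is bipartite, so $1-\lambda_j$ can be close to $-1$ for high modes and you cannot simply write $\sum_{k\ge N}(1-\lambda_j)^k=(1-\lambda_j)^N/\lambda_j$ and dominate $|1-\lambda_j|^N$ by $(1-\lambda_1)^N$ uniformly in $j$; and your claim $\phi_1(0)\phi_1(x)\ge c\rad^{-d}$ is false for $x$ with $|x|$ of order $3\rad$ (there $\phi_1(x)$ decays exponentially in $\rad$, not polynomially), though this last point is harmless since the ratio you compare does not actually involve $\phi_1(0)\phi_1(x)$.

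The paper avoids spectral analysis entirely and instead argues by path surgery. On the good event $G'$---which in addition to the vacant ball and confinement (Propositions~\ref{prop:vacant},~\ref{prop:outsideB},~\ref{prop:confine}) crucially includes the frequent-return property~\eqref{eq:return} from Corollary~\ref{cor:rho^2}---any realization with $\tau_x^N=N+l$ and $l\le N$ must pass through $B^-$ at some time $\tau_{B^-}+m$ with $m\in[l+\rad^2,l+2\rad^2]$. Decomposing at $\tau_{B^-}$ and $\tau_{B^-}+m$, one replaces the middle piece $p^{B^+}_m(y,z)$ by $N^c\,p^{B^-}_{m-l}(y,z)\le N^c\,p^{\Z^d\setminus\Oi}_{m-l}(y,z)$ using the heat kernel comparison on nested Euclidean balls (\cite[Proposition~6.9.4]{LL10}), which deletes exactly the $l$ extra steps and recovers a path with $S_N=x$, $\tau_\Oi>N$. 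Summing over $l$ gives the polynomial factor. This is elementary and self-contained: it only uses the obstacle-free geometry of $B^-\subset B^+$ and never touches the spectrum of the random domain. Your spectral route would be cleaner if the gap could be established, but filling that hole requires machinery the paper does not provide.
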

This lemma implies that the summand in~\eqref{eq:endpoint_wise} can be estimated as
\begin{equation}
\begin{split}
\mu_N^h\left(S_N=x,A\right)
&=e^{\langle h, x \rangle}\tilde\mu_{N,x}\left(A\right)
 \frac{\P\otimes\bP\left(S_N=x, \tau_\Oi > N \right)}{\E\otimes\bE\left[e^{\langle h,S_N \rangle}\colon \tau_\Oi>N\right]}\\
&\le e^{\langle h, x \rangle}\tilde\mu_{N,x}\left(A\right)
 \frac{\P\otimes\bP\left(S_N=x, \tau_\Oi > N \right)}{\E\otimes\bE\left[e^{\langle h,S_N \rangle}\colon S_N=x, \tau_\Oi>N\right]}\\
&= \tilde\mu_{N,x}\left(A\right)\\
&\le N^{c_{\ref{lem:pinned}}}\mu_{N,x}\left(A\right).
\end{split}
\label{eq:mu^h<mu_x}
\end{equation}
Now we choose $A$ to be the event
\begin{equation}
\left({\bigcup_{z\in\ball{\rad\be_h}{{\epsilon^{1/4}}\rad}}}G(z) \cap \left\{ S_{[0,\tau_x^N]} \subset \ball{\rad\be_h}{(1+{\epsilon^{1/5}})\rad}\right\}\right)^c
\end{equation}
Since the $\mu_{N,x}$ probability of this event decays super-polynomially by Corollary~\ref{cor:confine}, so does the left-hand side of~\eqref{eq:mu^h<mu_x}. Coming back to~\eqref{eq:endpoint_wise} and recalling~\eqref{eq:confine2} in the definition of $G(z)$, we conclude that
\begin{equation}
\mu_N^h\left(\ball{\rad\be_h}{\left(1-\epsilon^{1/3}\right)\rad}\subset
S_{[0,N]}\subset \ball{\rad\be_h}{\left(1+\epsilon^{1/3}\right)\rad}
\right)\to 1
\end{equation}
as $N\to \infty$. Since this holds for all sufficiently small $\epsilon>0$, we complete the proof of~\eqref{eq:confine}.
\end{proof}
\begin{proof}
[Proof of Lemma~\ref{lem:pinned}]
We are going to prove
\begin{equation}
 \P\otimes\bP\left(\tau_\Oi>\tau_x^N\right)
\le N^c \P\otimes\bP\left(S_N=x, \tau_\Oi>N\right).
\label{eq:pf_pinned}
\end{equation}
From this and $\{S_N=x, \tau_\Oi>N\}\subset\{\tau_\Oi>\tau_x^N\}$, we can deduce~\eqref{eq:pinned}. The proof of~\eqref{eq:pf_pinned} relies on a path switching argument: we show that a path with $\tau_\Oi>\tau_x^N$ can be shortened to satisfy $S_N=x$ and $\tau_\Oi>N$ without paying too much cost.
To this end, let us define a good event by
\begin{align}
G'=\Bigl\{&\Oi\cap \ball{\cent}{(1-\delta_{N,x}^{c_{\ref{prop:vacant}}})\rad}=\emptyset,\label{eq:vacant8}\\
&\tau_{B^-} \vee \left(\tau_x^N-\tau_{B^-}^{\leftarrow}\right) \le \epsilon N,\label{eq:short}\\
&S_{[\tau_{B^-}, \tau_{B^-}^{\leftarrow}]} \subset B^+,\label{eq:confine8}\\
&\forall k\in[\tau_{B^-},\tau_{B^-}^{\leftarrow}-\rad^2], S_{[k,k+\rad^2]}\cap B^-\neq\emptyset.\Bigr\}\label{eq:return}
\end{align}
Under the assumption $|x|\le \epsilon\rad^d$, by Propositions~\ref{prop:vacant}~,~\ref{prop:outsideB} and~\ref{prop:confine} and Corollary~\ref{cor:rho^2}, we have
\begin{equation}
 \P\otimes\bP\left(\tau_\Oi>\tau_x^N\right)
= (1+o(1)) \P\otimes\bP\left(\tau_\Oi>\tau_x^N, G'\right)
\end{equation}
as $N\to\infty$, and hence it suffices to bound the right-hand side.

Now suppose that $\tau_x^N=N+l<\tau_\Oi$ and $G'$ holds, where we may assume $l\le N$ by Corollary~\ref{cor:toolong}. Then from~\eqref{eq:short} and~\eqref{eq:return}, it follows that there exists $m\in [l+\rad^2, l+2\rad^2]$ such that $S_{\tau_{B^-}+m}\in B^-(z)$. We make a case distinction according to $\tau_{B^-}=n$ ($0\le n\le \epsilon N$) and use the Markov property at time $n$ and $n+m$ to obtain
\begin{equation}
\begin{split}
&\P\otimes\bP\left(\tau_\Oi>\tau_x^N=N+l,G'\right)\\
&\quad \le \sum_{n\le \epsilon N}\sum_{m\in [l+\rad^2, l+2\rad^2]}\sum_{y,z\in B^-}\E\left[p^{\Z^d\setminus\Oi}_n(0,y)p^{B^+}_m(y,z)p^{\Z^d\setminus\Oi}_{N+l-m-n}(z,x)\colon \text{\eqref{eq:vacant8}}\right],
\end{split}
\label{eq:lmnyz}
\end{equation}
where $p^U_n(x,y)$ stands for the transition probability of the random walk killed upon existing from $U$ (see below~\eqref{eq:SG2}). 
We are going to shorten the time in $p^{B^+}_m(y,z)$.
Since $p^{B^+}_m(y,z)\le 1$, $p^{B^-}_{m-l}(y,z)\ge c\rad^{-d-1}$ by~\cite[Proposition~6.9.4]{LL10}, and $B^-\subset \Z^d\setminus\Oi$ by~\eqref{eq:vacant8}, we have 
\begin{equation}
\begin{split}
 p^{B^+}_m(y,z)&\le N^c p^{B^-}_{m-l}(y,z)\\
&\le N^c p^{\Z^d\setminus\Oi}_{m-l}(y,z).
\end{split}
\end{equation}
Substituting this into~\eqref{eq:lmnyz} and summing over $l\le N$, we obtain
\begin{equation}
\P\otimes\bP\left(\tau_\Oi>\tau_x^N,G'\right)
\le N^c \P\otimes\bP\left(S_N=x, \tau_\Oi>N\right)
\end{equation}
and we are done.
\end{proof}

\appendix
\section{Proof of Lemmas~\ref{lem:almost} and~\ref{lem:slow_cross} by the Method of Enlargement of Obstacles}
\label{sec:MEO}
If the reader is familiar with the method of enlargement of obstacles developed in~\cite{Szn97a,Szn98}, or its discrete version in~\cite{BAR00}, then it is rather easy to prove Lemmas~\ref{lem:almost} and~\ref{lem:slow_cross} by adapting the proofs of~\cite[Proposition~2]{Pov99} and~\cite[Lemma~1]{Pov99}, respectively. In fact, a statement similar to Lemma~\ref{lem:almost} appeared in~\cite[Remark~1.4]{Szn95b2}.

In~\cite{Pov99}, the method of enlargement of obstacles is used to construct a certain set $\mathscr{U}_{\rm cl}$ which is almost free from obstacles and also any point outside is well surrounded by obstacles. We shall recall these properties more precisely in the proofs.

In this alternative argument, we need to change the exponent $1/5$ in~\eqref{eq:def_delta} to another $\chi>0$ depending only on $d$ and $p$, to be determined later:
\begin{equation}
\delta_{N,x}=\rad^{-\chi}\vee ({|x|}/{\rad^{d}}).
\label{eq:def_delta2}
\end{equation}
\begin{remark}
When one compares the following argument with that in~\cite{Pov99}, it is important to keep in mind that space is scaled by the factor $\rad^{-1}$ in~\cite{Pov99}.
\end{remark}
\begin{proof}[Proof of Lemma~\ref{lem:almost}]
It is shown in~\cite[(46), (49), (52) and (56)]{Pov99} that there exist $c_4>1$, $\alpha_1,\alpha_2>0$ and a random set $\mathscr{U}_{\rm cl}\subset \ball{0}{2N}$ such that
\begin{equation}
\begin{split}
 &\P\otimes\bP\left(|\mathscr{U}_{\rm cl}|\log \tfrac{1}{p}+N\lambda_{\mathscr{U}_{\rm cl}}>N^{\frac{d}{d+2}}\left(c(d,p)+c_4N^{-\frac{\alpha_1}{d+2}}\right), \tau_\Oi>N\right)\\
&\quad \le \exp\left\{-N^{\frac{d}{d+2}}\left(c(d,p)+{c_4}N^{-\frac{\alpha_1}{d+2}}\right)+N^{\frac{d-\alpha_1}{d+2}}\right\},\label{eq:MEO_var}
\end{split}
\end{equation}
and
\begin{equation}
\P\otimes\bP\left(|\Oi\cap\mathscr{U}_{\rm cl}| \ge t^{\frac{d-\alpha_2}{d+2}}\right)
\le\exp\left\{-\tfrac32 c(d,p) N^{\frac{d}{d+2}}\right\}.
\label{eq:MEO_vol}
\end{equation}
(The parameter $\alpha_2$ corresponds to $\kappa-d\alpha_0$ in~\cite{Pov99}.)
In~\eqref{eq:MEO_var}, the term $c_4N^{-\frac{\alpha_1}{d+2}}$ in fact directly inherits from the first line to the second, and this bound remains valid if we replace it by $2\beta\left(\be_x\right)\delta_{N,x}$, and $\tau_\Oi>N$ by $\tau_\Oi>\tau_x^N$, if we choose $\chi<\alpha_1$ in~\eqref{eq:def_delta2}. Then we can repeat the argument in~\cite[Proposition~2]{Pov99} to see that on the event
\begin{equation}
\left\{|\mathscr{U}_{\rm cl}|\log \tfrac{1}{p}+N\lambda_{\mathscr{U}_{\rm cl}}
\le N^{\frac{d}{d+2}}\left(c(d,p)+2\beta\left(\be_x\right)\delta_{N,x}\right)\right\},
\end{equation}
there exists a ball $\ball{\cent}{\rad}$ and $c_5>0$ such that
\begin{equation}
|\mathscr{U}_{\rm cl}\triangle \ball{\cent}{\rad}|
<c_5(\beta\left(2\be_x\right)\delta_{N,x})^{\frac{1}{32}}\rad^d
\label{eq:symmdiff1}
\end{equation}
by the quantitative Faber--Krahn inequality in~\cite{BDV15}.
\begin{remark}
As in Section~\ref{appendix}, we need to enlarge $\mathscr{U}_{\rm cl}$ to $\boldsymbol{\mathscr{U}}_{\rm cl}^+$ in order to apply the quantitative Faber--Krahn inequality in~\cite{BDV15}. But we do not need to worry about the increase of volume since $\mathscr{U}_{\rm cl}$ is defined as a union of cubes of the form $\rad^{1-\gamma} (q+[0,1)^d)$ with $\gamma\in(0,1)$ and $q\in\Z^d$.
\end{remark}
Combining the above consideration with~\eqref{eq:pf} and~\eqref{eq:MEO_vol}, we conclude that if $c<\frac{1}{32}\wedge \frac{\alpha_2}{\chi}$ so that
\begin{equation}
\delta_{N,x}^c\rad^d
\ge c_5(2\beta\left(\be_x\right)\delta_{N,x})^{\frac{1}{32}}\rad^d
+N^{\frac{d-\alpha_2}{d+2}}
\end{equation}
for sufficiently large $N$, then
\begin{equation}
\begin{split}
&\mu_{N,x}\left(|\Oi\cap \ball{\cent}{\rad}|
 \ge \delta_{N,x}^c\rad^d\right)\\
&\quad\le \mu_{N,x}\left(|\mathscr{U}_{\rm cl}\triangle \ball{\cent}{\rad}|
 \ge c_5(2\beta\left(\be_x\right)\delta_{N,x})^{\frac{1}{32}}\rad^d\right)\\
&\quad < \exp\left\{\beta\left(\be_x\right)(|x|-2\rad)+\epsilon \delta_{N,x}N^{\frac{d}{d+2}}-2\beta\left(\be_x\right)\delta_{N,x}N^{\frac{d}{d+2}}\right\}\\
&\quad < \exp\left\{-\frac{1}{2}\beta(\be_x)\delta_{N,x}N^{\frac{d}{d+2}}\right\},
\end{split}
\label{eq:almost_proof}
\end{equation}
which implies~\eqref{eq:almost}.
\end{proof}
\begin{proof}[Proof of Lemma~\ref{lem:slow_cross}]
By~\eqref{eq:almost_proof}, it suffices to prove that on the event~\eqref{eq:symmdiff1}, there exists $c>0$ such that
\begin{equation}
\label{eq:forest}
\lambda_{\ball{0}{2N}\setminus (B^-\cup\Oi)}
\ge \delta_{N,x}^{-c}\rad^{-2}.
\end{equation}
Indeed, this and a well-known semigroup bound~\cite[(2.21)]{Kon16} imply that when $N$ is sufficiently large, for any $y\in\Z^d$ and $\delta_{N,x}^{-c/3} \rad^{-2}\le t\le 2N$,
\begin{equation}
\begin{split}
 \bP_y\left(S_{[0,t]}\cap \left(\Oi \cup B^-\right)=\emptyset\right)
 &\le |\ball{0}{2N}|^{1/2}\left(1-\delta_{N,x}^{-c} \rad^{-2}\right)^t\\
 &\le \exp\left\{-\delta_{N,x}^{-c/2} \rad^{-2} t\right\}.
\end{split}
\end{equation}

The proof of~\eqref{eq:forest} is almost identical to that of~\cite[Lemma~1]{Pov99}. We only recall a brief outline of the argument. The key element is~\cite[Proposition~2.4 on pp.160--161]{Szn98}, which roughly says that for a set $U\subset \Z^d$, the eigenvalue $\lambda_{U\setminus\Oi}$ is large if the clearing set $\mathscr{U}_{\rm cl}$ is \emph{locally thin} in the following sense:
\begin{equation}
\left|U\cap \mathscr{U}_{\rm cl} \cap \rad(q+[0,1]^d)\right|=o(\rad^d)\text{ for any }q\in\Z^d.
\end{equation}
On~\eqref{eq:symmdiff1}, we know that $\mathscr{U}_{\rm cl}$ largely coincides with $B^-$ and hence $\ball{0}{2N}\setminus (B^-\cup\Oi)$ is locally thin in the above sense.
\end{proof}

\section*{Acknowledgement}
The authors are grateful to Alain-Sol Sznitman for useful discussions and encouragement on the topic.
J.~Ding is supported by NSF grant DMS-1757479 and an Alfred Sloan fellowship.
R.~Fukushima is supported by JSPS KAKENHI Grant Number 16K05200 and ISHIZUE 2019 of Kyoto University Research Development Program. 
R.~Sun is supported by NUS Tier 1 grant R-146-000-253-114.


\begin{thebibliography}{10}

\bibitem{BAR00}
G.~Ben~Arous and A.~F. Ram\'{i}rez.
\newblock Asymptotic survival probabilities in the random saturation process.
\newblock {\em Ann. Probab.}, 28(4):1470--1527, 2000.

\bibitem{BC18}
N.~Berestycki and R.~Cerf.
\newblock The random walk penalised by its range in dimensions $d\ge 3$.
\newblock \emph{Preprint}, arXiv:1811.04700, 2018.

\bibitem{Bol94}
E.~Bolthausen.
\newblock Localization of a two-dimensional random walk with an attractive path
  interaction.
\newblock {\em Ann. Probab.}, 22(2):875--918, 1994.

\bibitem{BDV15}
L.~Brasco, G.~De~Philippis, and B.~Velichkov.
\newblock Faber-{K}rahn inequalities in sharp quantitative form.
\newblock {\em Duke Math. J.}, 164(9):1777--1831, 2015.

\bibitem{DFSX18}
J.~Ding, R.~Fukushima, R.~Sun, and C.~Xu.
\newblock Geometry of the random walk range conditioned on survival among
  {B}ernoulli obstacles.
\newblock arXiv:1806.08319, 2019. To appear in {\em Probab. Theory Related Fields}.

\bibitem{DX18}
J.~Ding and C.~Xu.
\newblock Localization for random walks among random obstacles in a single
  {E}uclidean ball.
\newblock \emph{Preprint}, arXiv:1807.08168, 2018.

\bibitem{DV79}
M.~Donsker and S.~Varadhan.
\newblock On the number of distinct sites visited by a random walk.
\newblock {\em Comm. Pure Appl. Math.}, 32(6):721--747, 1979.

\bibitem{EL87}
T.~Eisele and R.~Lang.
\newblock Asymptotics for the {W}iener sausage with drift.
\newblock {\em Probab. Theory Related Fields}, 74(1):125--140, 1987.

\bibitem{Ell85}
R.~S. Ellis.
\newblock {\em Entropy, large deviations, and statistical mechanics}.
\newblock Classics in Mathematics. Springer-Verlag, Berlin, 2006.
\newblock Reprint of the 1985 original.

\bibitem{Flu07}
M.~Flury.
\newblock Large deviations and phase transition for random walks in random
  nonnegative potentials.
\newblock {\em Stochastic Process. Appl.}, 117(5):596--612, 2007.

\bibitem{Flu08}
M.~Flury.
\newblock A note on the ballistic limit of random motion in a random potential.
\newblock {\em Electron. Commun. Probab.}, 13:393--400, 2008.

\bibitem{GP82}
P.~Grassberger and I.~Procaccia.
\newblock Diffusion and drift in a medium with randomly distributed traps.
\newblock {\em Phys. Rev. A}, 26:3686--3688, 1982.

\bibitem{Iof15}
D.~Ioffe.
\newblock Multidimensional random polymers: a renewal approach.
\newblock In {\em Random walks, random fields, and disordered systems}, volume
  2144 of {\em Lecture Notes in Math.}, pages 147--210. Springer, Cham, 2015.

\bibitem{IV10}
D.~Ioffe and Y.~Velenik.
\newblock The statistical mechanics of stretched polymers.
\newblock {\em Braz. J. Probab. Stat.}, 24(2):279--299, 2010.

\bibitem{IV12}
D.~Ioffe and Y.~Velenik.
\newblock Self-attractive random walks: the case of critical drifts.
\newblock {\em Comm. Math. Phys.}, 313(1):209--235, 2012.

\bibitem{KS17}
M.~Kolb and M.~Savov.
\newblock Conditional survival distributions of {B}rownian trajectories in a
  one dimensional {P}oissonian environment in the critical case.
\newblock {\em Electron. J. Probab.}, 22:Paper No. 14,  2017.

\bibitem{Kon16}
W.~K\"onig.
\newblock {\em The parabolic {A}nderson model: random walk in random potential}.
\newblock Pathways in Mathematics. Birkh\"auser/Springer, Cham, 2016.

\bibitem{KM12}
E.~Kosygina and T.~Mountford.
\newblock Crossing velocities for an annealed random walk in a random
  potential.
\newblock {\em Stochastic Process. Appl.}, 122(1):277--304, 2012.

\bibitem{Kum14}
T.~Kumagai.
\newblock {\em Random walks on disordered media and their scaling limits},
  volume 2101 of {\em Lecture Notes in Mathematics}.
\newblock Springer, Cham, 2014.
\newblock Lecture notes from the 40th Probability Summer School held in
  Saint-Flour, 2010, \'{E}cole d'\'{E}t\'{e} de Probabilit\'{e}s de
  Saint-Flour.
  
\bibitem{Kuttler70}
J.~R. Kuttler.
\newblock Upper and lower bounds for eigenvalues by finite differences.
\newblock {\em Pacific J. Math.}, 35:429--440, 1970.

\bibitem{LL10}
G.~F. Lawler and V.~Limic.
\newblock {\em Random walk: a modern introduction}, volume 123 of {\em
  Cambridge Studies in Advanced Mathematics}.
\newblock Cambridge University Press, Cambridge, 2010.

\bibitem{Pov95}
T.~Povel.
\newblock On weak convergence of conditional survival measure of
  one-dimensional {B}rownian motion with a drift.
\newblock {\em Ann. Appl. Probab.}, 5(1):222--238, 1995.

\bibitem{Pov97}
T.~Povel.
\newblock Critical large deviations of one-dimensional annealed {B}rownian
  motion in a {P}oissonian potential.
\newblock {\em Ann. Probab.}, 25(4):1735--1773, 1997.

\bibitem{Pov99}
T.~Povel.
\newblock Confinement of {B}rownian motion among {P}oissonian obstacles in
  {${\bf R}^d,\ d\ge3$}.
\newblock {\em Probab. Theory Related Fields}, 114(2):177--205, 1999.

\bibitem{Set03}
S.~Sethuraman.
\newblock Conditional survival distributions of {B}rownian trajectories in a
  one dimensional {P}oissonian environment.
\newblock {\em Stochastic Process. Appl.}, 103(2):169--209, 2003.

\bibitem{Szn91a}
A.-S. Sznitman.
\newblock On long excursions of {B}rownian motion among {P}oissonian obstacles.
\newblock In {\em Stochastic analysis ({D}urham, 1990)}, volume 167 of {\em
  London Math. Soc. Lecture Note Ser.}, pages 353--375. Cambridge Univ. Press,
  Cambridge, 1991.

\bibitem{Szn91b}
A.-S. Sznitman.
\newblock On the confinement property of two-dimensional {B}rownian motion
  among {P}oissonian obstacles.
\newblock {\em Comm. Pure Appl. Math.}, 44(8-9):1137--1170, 1991.

\bibitem{Szn95b1}
A.-S. Sznitman.
\newblock Annealed {L}yapounov exponents and large deviations in a {P}oissonian
  potential {I}.
\newblock {\em Ann. Sci. \'Ecole Norm. Sup. (4)}, 28(3):345--370, 1995.

\bibitem{Szn95b2}
A.-S. Sznitman.
\newblock Annealed {L}yapounov exponents and large deviations in a {P}oissonian
  potential {II}.
\newblock {\em Ann. Sci. \'Ecole Norm. Sup. (4)}, 28(3):371--390, 1995.

\bibitem{Szn97a}
A.-S. Sznitman.
\newblock Capacity and principal eigenvalues: the method of enlargement of
  obstacles revisited.
\newblock {\em Ann. Probab.}, 25(3):1180--1209, 1997.

\bibitem{Szn98}
A.-S. Sznitman.
\newblock {\em Brownian motion, obstacles and random media}.
\newblock Springer Monographs in Mathematics. Springer-Verlag, Berlin, 1998.

\end{thebibliography}

\end{document}